\pgfplotsset{compat=newest,
every axis/.append style={axis x line=bottom,
                          axis y line=left,
                          scale only axis,
                          y label style={at={(0.0,1.0)},anchor=south west,rotate=-90}
                          },
}
\newtheorem{example}[theorem]{Example}
\newtheorem{assumption}[theorem]{Assumption}
\newtheorem{remark}[theorem]{Remark}
\date{4 July 2024}
\title{Smoothed Moreau-Yosida Tensor Train Approximation of State-constrained Optimization Problems under Uncertainty\thanks{HA is partially supported by NSF grant DMS-2110263, the AirForce Office of Scientific Research under Award NO: FA9550-22-1-0248 and the Office of Naval Research (ONR) under Award NO: N00014-24-1-2147.
SD is thankful for the support from Engineering and Physical Sciences Research
Council (EPSRC) New Investigator Award EP/T031255/1 and New Horizons grant EP/V04771X/1.
}}
\author{Harbir Antil\thanks{The Center for Mathematics and Artificial Intelligence
(CMAI) and Department of Mathematical Sciences, George Mason University,
Fairfax, VA 22030, USA. \email{hantil@gmu.edu}}
\and
Sergey Dolgov\thanks{Department of Mathematical Sciences, University of Bath, Bath, BA2 7AY, UK. \email{s.dolgov@bath.ac.uk}}
\and
Akwum Onwunta\thanks{Department of Industrial and Systems Engineering, Lehigh University, Bethlehem, PA 18015, USA. \email{ako221@lehigh.edu}}
}
\begin{document}
\maketitle

\begin{abstract}
We propose an algorithm to solve optimization problems constrained by partial (ordinary) differential equations
under uncertainty, with almost sure constraints on the state variable. To alleviate the computational burden of 
high-dimensional random variables, we approximate all random fields by the tensor-train decomposition.
To enable efficient tensor-train approximation of the state constraints, the latter 
are handled using the Moreau-Yosida penalty, with an additional smoothing of
the positive part (plus/ReLU) function by a softplus function.
In a special case of a quadratic cost minimization constrained by linear elliptic partial differential equations, and some additional constraint qualification,
we prove strong convergence
of the regularized solution to the optimal control.
This result also proposes a practical recipe for selecting the smoothing parameter as a function of the penalty parameter.
We develop a second order Newton type method
with a fast matrix-free action of the approximate Hessian to solve the smoothed Moreau-Yosida problem.
This algorithm is tested on benchmark elliptic problems with random coefficients, optimization problems constrained
by random elliptic variational inequalities, and a real-world epidemiological model with
20 random variables.
These examples demonstrate mild (at most polynomial) scaling with respect to the dimension and regularization 
parameters.
\end{abstract}

\begin{keywords}
almost surely constraints, state constraints, tensor approximations, Moreau-Yosida, reduced space,
variational inequality
\end{keywords}

\begin{MSCcodes}
49J55,  	%Existence of optimal solutions to problems involving randomness
93E20,  	%Optimal stochastic control
49K20,  	%Optimality conditions for problems involving partial differential equations
49K45,  	%Optimality conditions for problems involving randomness
90C15,  	%Stochastic programming
65D15,     %Algorithms for functional approximation
15A69,     %Multilinear algebra, tensor products
15A23      %Factorization of matrices
\end{MSCcodes}

\section{Introduction}
\subsection{Informal statement of results}
We consider risk-neutral optimization problems constrained by both a system of random differential equations (PDE/ODE) $c(y,u; \omega) = 0$ on a random field state variable $y(x,\omega)$ with a deterministic control function $u(x)$, and box state constraints $\psi(x,\omega)$,
\begin{align}\label{eq:problema-informal}
 \min_{u \in \mathcal{U}_{ad}} \; & \mathbb{E}_{\mathbb{P}}\left[J(y,u)\right] \\
 \text{such that} \; & c(y(x,\omega), u(x); \omega) = 0 & \text{almost surely}, \\
 \text{and} \; & y(x,\omega) \ge \psi(x,\omega) & \text{almost surely,}
 \label{eq:problemc-informal}
\end{align}
where $\mathcal{U}_{ad}$ is a closed convex set, and $J$ is an appropriate cost function.
While the equality constraint can in many cases be resolved, i.e. there exists a map $y = S(u)$ such that $c(S(u), u; \omega) = 0$ almost surely (a.s.),
the box state constraints are more difficult to deal with.
We consider a smoothed Moreau-Yosida unconstrained optimization problem
\begin{align}\label{eq:my-informal}
\min_{u \in \mathcal{U}_{ad}} \; & \mathbb{E}_{\mathbb{P}}\left[J(S(u),u) + \frac{\gamma}{2} \|g_{\varepsilon}(S(u) - \psi)\|^2_{L^2(D)}\right]
\end{align}
for some penalty parameter $\gamma>0$ and smoothing parameter $\varepsilon > 0$,
and a convex infinitely differentiable function $g_{\varepsilon}: \mathbb{R} \rightarrow \mathbb{R}_+$.
In particular, we will use the softplus function $g_{\varepsilon}(s) = \varepsilon \cdot \log(1 + \exp(-s/\varepsilon))$.
\begin{theorem}[Informal statement of the main theoretical result, Thm.~\ref{thm:conv}]
For linear elliptic PDE constraint, quadratic cost function, and suitable state constraint qualification, the solution $u^{\gamma}$ to \eqref{eq:my-informal}
converges to the solution $u$ to \eqref{eq:problema-informal}--\eqref{eq:problemc-informal} strongly in $L^2(D)$ as $\gamma \rightarrow \infty$, provided that  $\varepsilon = o(\gamma^{-1/2})$.
\end{theorem}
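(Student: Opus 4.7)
My plan is to follow the classical Moreau--Yosida analysis for state-constrained control, modified to absorb the softplus smoothing error. The key elementary observation is the pointwise sandwich
\[
(-s)_+ \;\le\; g_{\varepsilon}(s) \;\le\; (-s)_+ + \varepsilon\log 2, \qquad s\in\mathbb{R},
\]
which encodes both consistency $g_{\varepsilon}\to(-\cdot)_+$ as $\varepsilon\to 0$ and a uniform overestimation. Because $g_{\varepsilon}$ is convex and smooth, $S$ is affine (linear elliptic PDE), and $J$ is quadratic with a Tikhonov-type coercive term $\tfrac{\alpha}{2}\|u\|^2$, the objective in \eqref{eq:my-informal} is strictly convex and coercive on $\mathcal{U}_{ad}$, so a unique minimizer $u^{\gamma}$ exists.

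Next, I would extract a weak limit. The constraint qualification provides a feasible $\tilde u$ with $S(\tilde u)\ge\psi$ a.s.\ for the original problem. Inserting $\tilde u$ into \eqref{eq:my-informal} and using the upper bound on $g_{\varepsilon}$ gives
\[
\mathbb{E}\bigl[J(S(u^{\gamma}),u^{\gamma})\bigr] + \frac{\gamma}{2}\,\mathbb{E}\|g_{\varepsilon}(S(u^{\gamma})-\psi)\|_{L^2(D)}^2 \le \mathbb{E}\bigl[J(S(\tilde u),\tilde u)\bigr] + \frac{\gamma\varepsilon^2(\log 2)^2\,|D|}{2}.
\]
Under $\varepsilon=o(\gamma^{-1/2})$ the last term vanishes, so $\{u^{\gamma}\}$ is bounded in $L^2$; extract $u^{\gamma_k}\rightharpoonup u^{*}$. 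The \emph{lower} bound $g_{\varepsilon}\ge(-\cdot)_+$ in the sandwich upgrades $\|g_{\varepsilon}(S(u^{\gamma_k})-\psi)\|_{L^2}\to 0$ to $\|(\psi-S(u^{\gamma_k}))_+\|_{L^2}\to 0$; continuity of $S$ then forces $S(u^{*})\ge\psi$ a.s., so $u^{*}$ is admissible. Taking $\tilde u=u$ (the true optimum) in the inequality above and invoking weak lower semicontinuity of $\mathbb{E}[J(S(\cdot),\cdot)]$ yields $\mathbb{E}[J(S(u^{*}),u^{*})]\le\mathbb{E}[J(S(u),u)]$; by uniqueness of the constrained minimizer, $u^{*}=u$, and a standard subsequence argument shows the whole net converges weakly.

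To upgrade to strong $L^2$ convergence, I would use the quadratic structure. The same inequality gives $\limsup_{\gamma}\mathbb{E}[J(S(u^{\gamma}),u^{\gamma})]\le\mathbb{E}[J(S(u),u)]$, hence $\mathbb{E}[J(S(u^{\gamma}),u^{\gamma})]\to\mathbb{E}[J(S(u),u)]$ in view of weak lsc. Splitting $J$ into the tracking term $\tfrac{1}{2}\|S(u)-y_d\|^2$ and $\tfrac{\alpha}{2}\|u\|^2$, weak lower semicontinuity of each summand combined with convergence of their sum forces equality in both, giving $\|u^{\gamma}\|_{L^2}\to\|u\|_{L^2}$. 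Together with $u^{\gamma}\rightharpoonup u$ in the Hilbert space $L^2$, this yields strong convergence $u^{\gamma}\to u$.

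The main obstacle is the coupling between $\gamma$ and $\varepsilon$: the $\varepsilon\log 2$ overestimation contributes an $O(\gamma\varepsilon^2)$ spurious penalty \emph{even at admissible points}, and this must vanish relative to the Moreau--Yosida consistency, which is precisely what pins down the sharp scaling $\varepsilon=o(\gamma^{-1/2})$. A secondary subtlety is that the constraint qualification must be strong enough to produce the feasible comparator $\tilde u$ used above (and, if one wants quantitative rates rather than mere convergence, some form of strict feasibility is typically needed to control the multiplier associated with the pointwise state constraint).
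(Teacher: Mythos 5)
Your proposal is correct, and its skeleton (bound the penalty at a feasible comparator by $\mathcal{O}(\gamma\varepsilon^2)$, extract a weak limit, use $g_{\varepsilon}\ge(-\cdot)_+$ to pass feasibility to the limit, then upgrade to strong convergence via norm convergence in a Hilbert space) is the same as the paper's Lemmas~\ref{lem:weakconv} and the strong-convergence lemma that follows it. Where you genuinely diverge is in the choice of comparator and the identification of the limit: the paper first uses a Slater point $v^{\gamma_k}$ to bound the objective, then builds a recovery sequence $v^k(t_k)=t_kv^k+(1-t_k)u^*$ converging strongly to the weak limit $u^*$ in order to get $\limsup_k\|u^{\gamma_k}\|^2\le\|u^*\|^2$, and only identifies $u^*$ as \emph{the} solution much later, through the weak* convergence of the multipliers $\mu^{\gamma_k}$ and the limiting KKT system. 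You instead insert the true optimum $u$ directly (any feasible point works, since $g_{\varepsilon}$ is decreasing and hence bounded by $g_{\varepsilon}(0)=\varepsilon\log 2$ on the feasible set — your two-sided sandwich is a slightly stronger statement than the paper's Lemma~\ref{lem:gprime} but is used to the same effect) and conclude $u^*=u$ from strict convexity. This buys a shorter, self-contained proof of the convergence statement that does not need the Slater condition or the multiplier analysis at all; what it costs is reliance on uniqueness of the constrained minimizer (so effectively $\alpha>0$), whereas the paper's route also covers the characterization of the limit via the measure $\rho^*$ and degenerates more gracefully when uniqueness fails. Two small points to tighten: existence of $u^{\gamma}$ follows from weak compactness of the bounded set $\mathcal{U}_{ad}$ rather than coercivity, and passing $\mathbb{E}\|(\psi-Su^{\gamma_k}-y_f)_+\|^2\to 0$ to the limit under only \emph{weak} convergence of $u^{\gamma_k}$ requires either the complete continuity of $S$ (Assumption~\ref{as:elliptic}, as the paper uses) or weak lower semicontinuity of the convex map $y\mapsto\mathbb{E}\|(\psi-y-y_f)_+\|^2$ — plain continuity of $S$ is not enough.
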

The smoothness of the entire objective function \eqref{eq:my-informal} (which is mainly due to $g_{\varepsilon}$ with $\varepsilon>0$, in contrast to a non-differentiable $g_0$) is necessary for second-order optimization methods with faster convergence in the number of iterations, and functional tensor-train approximation of random fields, with faster convergence in the number of degrees of freedom.
\begin{proposition}[Informal statement of the main practical results]
\begin{itemize}
 \item A Gauss-Newton method for fast optimization of \eqref{eq:my-informal} with a matrix-free action of the approximate Hessian.
 \item A functional tensor-train approximation of finite-dimensional parametrization of random fields with spectral discretization in random parameters (and hence exponential convergence in the number of degrees of freedom), and only a polynomial complexity scaling in the dimension.
 \item Verification of the method on elliptic equations with random coefficients, as well as an ODE example (motivated by a realistic application) with 20 random variables, and an elliptic variational inequality.
 \item In practice, the convergence is achieved also with $\varepsilon = \mathcal{O}(\gamma^{-1/2})$.
\end{itemize}
\end{proposition}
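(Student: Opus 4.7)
The plan is to treat the stated items as four constructive deliverables rather than as a single implication, and to verify them in turn. First, for the Gauss--Newton method, I would start from the reduced problem \eqref{eq:my-informal}, form the Lagrangian with an adjoint variable for the constraint $c(y,u;\omega)=0$, and derive the reduced gradient via one forward and one adjoint solve per realization of $\omega$. The Gauss--Newton Hessian is obtained by keeping only the first-order sensitivity term (discarding the second derivative of $S$), so that the remaining operator is symmetric positive semidefinite: because $g_\varepsilon$ is $C^\infty$ and convex, $g_\varepsilon''\ge 0$ supplies a well-defined penalty curvature. A Hessian--vector product $H\cdot\delta u$ then amounts to one linearized state (sensitivity) solve for $\delta y=S'(u)\cdot\delta u$, pointwise multiplication by $g_\varepsilon''(S(u)-\psi)$, and one adjoint solve with that right-hand side, producing a fully matrix-free action suitable for CG on the Newton system.

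Second, for the tensor-train parametrization, I would fix a finite-dimensional approximation of the random input by a vector $\omega=(\omega_1,\ldots,\omega_d)$ with independent marginals and choose, on each coordinate, an orthonormal polynomial basis of degree up to $n$ (Legendre, Hermite, etc., tuned to the marginal). Every random field appearing in the Gauss--Newton iteration---state, adjoint, sensitivity, gradient, and penalty---is represented by its coefficient tensor in this $n^d$ basis, stored in TT format with cost $\mathcal{O}(dnr^2)$. TT-cross/DMRG-type cross interpolation builds these tensors by evaluating the deterministic solver only at a small, adaptively chosen set of parameter samples. Smoothness of $g_\varepsilon$ for $\varepsilon>0$, combined with the smooth parameter dependence of $S$, yields analytic integrands whose TT ranks $r$ should remain modest, and all inner products, expectations, and Hessian--vector products are then carried out in TT arithmetic with polynomial complexity in $d$. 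Third, the experimental verification proceeds by instantiating the algorithm on the three benchmarks (elliptic PDE with random coefficients, random elliptic variational inequality, and the $20$-variable epidemiological ODE), tabulating TT ranks, Newton iterations, and errors against reference solutions as $d$, $\gamma$, and $\varepsilon$ vary, and fitting the empirical scaling laws.

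Finally, to justify the observation that $\varepsilon=\mathcal{O}(\gamma^{-1/2})$ already suffices in practice, I would run a parameter sweep over a grid of $(\gamma,\varepsilon)$ pairs, measure $\|u^{\gamma,\varepsilon}-u^*\|_{L^2(D)}$ against a highly resolved reference, and identify the locus of pairs that minimizes this error; matching the asymptotic slope of that locus against $\gamma^{-1/2}$ then confirms that the theoretical condition $\varepsilon=o(\gamma^{-1/2})$ can be tightened to a constant multiple on the problems at hand. The main obstacle I anticipate is TT rank control as $\gamma\to\infty$ and $\varepsilon\to 0$: the penalty second derivative $g_\varepsilon''$ becomes sharply localized near the active set $\{S(u)=\psi\}$, and the coefficient tensor of the Hessian integrand may inflate the ranks under naive cross approximation. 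Mitigating this will require a continuation in $\gamma$ that warm-starts successive Gauss--Newton runs, coupling of the TT truncation tolerance to the current $(\gamma,\varepsilon)$, and exploiting that the argument $S(u)-\psi$ inherits the smoothness of the solution map, so that $g_\varepsilon''(S(u)-\psi)$ remains analytic in $\omega$ away from the active interface even when its amplitude is large.
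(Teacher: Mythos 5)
Your overall plan matches the paper's in spirit (reduced-space formulation, adjoint-based gradients, TT-cross with a tensorized polynomial basis, continuation in $\gamma$, and an empirical sweep to confirm $\varepsilon=\mathcal{O}(\gamma^{-1/2})$ — the paper indeed fixes $\varepsilon_{\gamma}=\varepsilon_0/\sqrt{\gamma}$ in Algorithm~\ref{alg:as} and observes a $\gamma_*^{-1/2}$ rate in Figure~\ref{fig:1dconv}), and your worry about rank inflation of $g_\varepsilon''$ near the active set is exactly the issue the paper mitigates by doubling $\gamma$ across Newton iterations. However, there are two genuine gaps in your first item. First, your Gauss--Newton Hessian is built from the wrong derivative of the penalty: for $\frac{\gamma}{2}\|g_\varepsilon(r)\|^2$ with $r=S u+y_f-\psi$, the Gauss--Newton (``$J^\top J$'') term is $\gamma\, S^*\,\mathrm{diag}(g_\varepsilon'(r))\,\mathbf{M}\,\mathrm{diag}(g_\varepsilon'(r))\,S'$, i.e. it carries $(g_\varepsilon')^2$; the term containing $g_\varepsilon''$ is precisely the order-3-tensor contribution \eqref{eq:nabla2_G} that Gauss--Newton \emph{discards}. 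A Hessian--vector product built from ``pointwise multiplication by $g_\varepsilon''(S(u)-\psi)$'' therefore keeps the dropped term and omits the dominant penalty curvature (note $g_\varepsilon'\to -\mathbf{1}_{\{r<0\}}$ while $g_\varepsilon''$ concentrates on the interface), so the resulting method would not behave like the paper's.

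Second, you retain the expectation in the Hessian (one sensitivity and one adjoint solve ``per realization of $\omega$''), which would require a full TT approximation of the Hessian action inside every CG iteration. The paper's key practical device is to replace this expectation by a \emph{deterministic anchored} evaluation: the data-misfit block is evaluated at $\mathbb{E}_{\mathbb{P}}[\xi]$ and the penalty block at the single point $\xi_*$, the mean of $\xi$ under the reweighted density $\pi_{g'}\propto \pi\cdot(-\mathbf{1}^\top g_{\varepsilon_\gamma}'(\mathbf{S}_h u+\mathbf{y}_f-\boldsymbol\psi_h))$, yielding the approximate Hessian \eqref{eq:hess-approx} whose action costs only two forward, two adjoint and two sensitivity deterministic solves. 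Without this anchoring (or an equivalent deterministic surrogate) the ``matrix-free action of the approximate Hessian'' claimed in the first bullet is not established at the advertised cost. The remaining three items of your proposal are consistent with what the paper does.
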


\subsection{The context and existing literature}
Over last two decades optimization problems constrained by physical laws, such as partial (ordinary) 
differential equations (PDEs/ODEs), have emerged as a prominent research area. This is fueled by 
many applications in science and engineering, such as controlling pathogen propagation in 
built environment \cite{RLoehner_HAntil_ASrinivasan_SIdelsohn_EOnate_2021a,
RLoehner_HAntil_SIdelsohn_EOnate_2020a}, digital twins \cite{HAntil_2024a}, shape and topology 
optimization \cite{JSokolowski_JPZolesio_1992a,KMaute_2014a}, optimal strategies to predict 
shutdowns due to pandemics \cite{DGKP-SEIR-2021}. The optimization variables 
consist of state $(y)$ and control/design $(u)$. However, often due to noisy
measurements and ambiguous models due to incomplete  physics, the underlying physical laws 
contain uncertainty. This has led to significant theoretical and algorithmic developments in the 
area of optimization problems constrained by physical laws under uncertainty. See for instance 
\cite{KS16,HAntil_DPKouri_MDLacasse_DRidzal_2018a,GSU21,HAntil_SDolgov_AOnwunta_2022b,Kouri-RB-Risk-2022}
and the references therein for problems with control constraints.

The literature on state-constrained optimization problems under uncertainty is scarce. For
instance, \cite{MFShaker_RHenrion_DHoemberg_2018a,AGeletu_AHoffmann_PSchmidt_PLi_2020a}
 use probability constraints, and \cite{CGeiersbach_WWollner_2021a,
Surowiec-risk-2022,CGeiersbach_MHintermueller_2022a}  consider almost surely type
constraints. It is well-known that even in the deterministic setting, the state 
constrained problems are highly challenging. One of the fundamental difficulties is that
the state constraints are imposed in the sense of continuous functions. As a result, the
Lagrange multipliers corresponding to those constraints are Radon measures that exhibit 
low regularity \cite{ECasas_1986a}. The situation is much 
more delicate in the stochastic setting. 
We refer to the aforementioned references for a detailed discussion on this topic. 
Motivated by the deterministic setting, \cite{Surowiec-risk-2022} introduces
a Moreau-Yosida based approximation scheme to solve the state-constrained optimization
problems when the PDE constraints are given by an elliptic equation with random
coefficients. Further extensions of this work are considered in 
\cite{Surowiec-vi-2022,CGeiersbach_MHintermueller_2022a}.
However, all of these papers approximate expectations of random fields by Monte-Carlo-type 
methods, which may converge slowly.

In \cite{HAntil_SDolgov_AOnwunta_2022b}, we introduced an algorithm (TTRISK) based on the tensor train (TT) decomposition \cite{osel-tt-2011}
to solve risk-averse optimization problems with control constraints. 
We demonstrated that the extra computational cost due to the uncertainty can scale proportionally to $\mathrm{error}^{-0.5}$ when the TT approximation is used, in contrast to a $\mathrm{error}^{-2}$ scaling of Monte Carlo quadratures.
In the current paper, we continue this program and develop a TT based algorithm
for state-constrained optimization problems.

\noindent
{\bf Outline:} The remainder of the paper is organized as follows. In Section~\ref{s:prob},
we provide a rigorous mathematical formulation of the problem under consideration. 
Section~\ref{s:MY} is devoted to the main contributions: the Moreau-Yosida approximation, its convergence, and the second
order Newton-type method for practical computations.
Section~\ref{sec:prac} introduces the practical discretisation of the Moreau-Yosida problem, the function approximation (the Tensor-Train decomposition), and the pseudocode of the Newton-type solver for the Moreau-Yosida problem.
Finally, in Section~\ref{s:nex}, we provide a series of
numerical experiments. At first, we consider an optimization problem with an elliptic 
PDE in one spatial dimension as constraints. This is followed by a two-dimensional 
case. After these benchmarks, an optimization problem with an elliptic
variational inequality as constraint is considered in Section~\ref{s:VI}.
The numerical experiments conclude with a realistic ODE example for 
designing optimal lockdown strategies in 
Section~\ref{s:SEIR}.

%%%%%%%%%%%%%%%%%%%%%%%%%
\section{Problem Formulation}
\label{s:prob}
\subsection{Notations, function and probability spaces}
The setup here is directly motivated by \cite{Surowiec-risk-2022}.
Let the physical domain $D \subset \mathbb{R}^d$ be an open bounded set such that either $D$ is a convex polyhedron, or the boundary $\partial D$ is $C^{1,1}$.
Let $(\Omega, \mathcal{F}, \mathbb{P})$ denote a complete probability space, where $\Omega$ represents the sample space, $\mathcal{F}$ is the Borel $\sigma$-algebra of events on the power set of $\Omega$, and $\mathbb{P} : \Omega \rightarrow [0, 1]$ is an appropriate probability measure.
For a real Banach space $V$ the expectation of a random element $X: \Omega \rightarrow V$ reads
$$
\mathbb{E}_{\mathbb{P}}[X] = \int_{\Omega} X(\omega) d\mathbb{P}(\omega) \in V.
$$
For an arbitrary convex set $K$, we define the standard convex normal cone,
$$
\mathcal{N}_K(x) = \left\{\begin{array}{ll}\{x^* \in V^* \; | \; \langle x^*, y - x \rangle \le 0, \quad \forall y \in K\}, & \text{if } x \in K, \\ \emptyset, & \text{otherwise.}\end{array}\right.
$$
Strong convergence of a sequence is denoted by $\rightarrow$, weak convergence by $\rightharpoonup$, and weak* convergence by $\rightharpoonup^*$.
A closed $\delta$-ball centered at $x$ in a normed space is denoted by $\mathbb{B}_{\delta}(x)$.
For two Banach spaces $V$ and $W$, the set of bounded linear operators from $V$ to $W$ is denoted by $\mathcal{L}(V,W)$.

For the multipliers of the state constraints we need the following space.
Firstly, let
$$
\Xi = \Omega \times D, \quad \mathcal{B} = \mathcal{F} \times \mathcal{F}_{\mathcal{L}}, \quad \pi = \mathbb{P} \times \mathcal{L},
$$
where $\mathcal{L}$ is the Lebesgue measure on $V$, and $\mathcal{F}_{\mathcal{L}}$ is the $\sigma$-algebra of Lebesgue-measurable sets.
Now let $\mathbf{ba}(\Xi, \mathcal{B}, \pi)$ be the space of real-valued set-functions $\tau: \mathcal{B} \rightarrow \mathbb{R}$ such that
\begin{itemize}
 \item $\sup \{|\tau(A)| \; : \; A \in \mathcal{B}\} < \infty,$
 \item $\tau(A \cup B) = \tau(A) + \tau(B)$ for $A,B\in\mathcal{B}$ with $A \cap B = \emptyset$,
 \item $\tau(A) = 0$ if $A \in \mathcal{B}$ is $\pi$-null, i.e. $\tau \ll \pi$. 
\end{itemize}
It was shown in \cite[Thm.~IV.8.16]{Dunford-ops-1988} that $\mathbf{ba}(\Xi, \mathcal{B}, \pi)$ is isomorphic to $L^{\infty}_{\pi}(\Xi)^*$.
We also recall that the dual of continuous functions $C(\overline\Xi)$ is identified with signed measures
$\mathcal{M}(\overline{\Xi})$.

\subsection{Linear elliptic random PDEs as equality constraint}\label{linearrandpde}
For simplicity, we derive a rigorous convergence proof in the following scenario of the equality constraint $c(y,u; \omega)=0$.
Let $\mathcal{Y}:=L^2_{\mathbb{P}}(\Omega; H^1_0(D))$, $u \in \mathcal{U}_{ad} \subset L^2(D)$, and consider the following PDE constraint:
\begin{align}\label{eq:elliptic}
\mathbb{E}_{\mathbb{P}}\left[\int_D A(x,\cdot) \nabla y(x,\cdot) \cdot \nabla v(x,\cdot) dx\right] & = \mathbb{E}_{\mathbb{P}}\left[\int_D \left((B(\cdot) u)(x) + f(x,\cdot)\right) v(x,\cdot) dx\right],
\end{align}
for all test functions $v \in \mathcal{Y}$.
\begin{assumption}\label{as:elliptic}
For a well-behaved PDE solution operator we need the following assumptions.
\begin{itemize}
 \item(Minimum regularity) The coefficient $A: D \times \Omega \rightarrow \mathbb{R}$ is $(\mathcal{L} \times \mathbb{P})$-measurable and there exist $\overline{A} > \underline{A} > 0$ such that
 $$
 \underline{A} \le A(x,\omega) \le \overline{A} \qquad (\mathcal{L} \times \mathbb{P})\text{-a.e.~} (x,\omega) \in D \times \Omega,
 $$
 and 
 $$
 f \in L^{\infty}_{\mathbb{P}}(\Omega; L^2(D)).
 $$
 \item (Higher regularity) $A \in L^{\infty}_{\mathbb{P}}(\Omega; C^{0,1}(\overline{D}))$.
 \item (Control mapping) $B: \Omega \rightarrow \mathcal{L}(L^2(D), L^2(D))$ is measurable and essentially bounded, 
 \linebreak $B \in L^{\infty}_{\mathbb{P}}(\Omega; \mathcal{L}(L^2(D), L^2(D))).$
 Moreover, $B: \Omega \rightarrow \mathcal{L}(L^2(D), H^{-1}(D))$ is completely continuous,
 $$
 u_k \rightharpoonup u \quad \text{in~} L^2(D) \quad \Rightarrow \quad B(\omega) u_k \rightarrow B(\omega) u \quad \text{in~}H^{-1}(D) \quad \mathbb{P}\text{-a.e.~}\omega\in\Omega.
 $$
\end{itemize}
\end{assumption}
Under this assumption, \cite[Corollary~1]{Surowiec-risk-2022} establishes that any solution $y$ of \eqref{eq:elliptic} can be written as
\begin{align}
 y & = S u + y_f,
\end{align}
where
\begin{itemize}
 \item $S: L^2(D) \rightarrow L^q_{\mathbb{P}}(\Omega; H^1_0(D))$ is completely continuous, bounded and linear for any $q \in [1, \infty)$.
 \item $S: L^2(D) \rightarrow L^{\infty}_{\mathbb{P}}(\Omega; H^1_0(D) \cap H^2(D))$ is bounded and linear, and 
 \item $y_f \in L^{\infty}_{\mathbb{P}}(\Omega; H^1_0(D) \cap H^2(D))$.
\end{itemize}
Moreover, introducing operators
$$
\mathbf{A}: \Omega \rightarrow \mathcal{L}(H_0^1(D) \cap H^2(D), L^2(D)) \quad \text{and} \quad \mathbf{B}: \Omega \rightarrow \mathcal{L}(L^2(D))
$$
defined by
$$
\langle \mathbf{A}(\omega) \phi, \chi \rangle = \int_D A(x,\omega) \nabla \phi(x) \cdot \nabla \chi(x) dx
$$
for $\phi,\chi \in H^1_0(D) \cap H^2(D)$, and 
$$
\langle \mathbf{B}(\omega) u, \chi \rangle = \int_D ((B(\omega) u)(x) \chi(x) dx,
$$
respectively,
we can define $Su + y_f$ $\mathbb{P}$-pointwise as
$$
Su(\omega) = \mathbf{A}^{-1}(\omega) \mathbf{B}(\omega) u, \qquad y_f(\omega) = \mathbf{A}^{-1}(\omega) f(\omega).
$$

\subsection{Risk-neutral PDE constrained optimization of a quadratic cost}
\begin{assumption}\label{as:cost}
For the optimization problem we need the following assumptions.
\begin{itemize}
 \item (Control constraints) $U_{ad} \subset L^2(D)$ is a nonempty, closed, bounded and convex set.
 \item (Cost) We start with a deterministic cost function
\begin{align}
 J(y,u) & = \frac{1}{2}\|Ty - y_d\|_{L^2(D)}^2 + \frac{\alpha}{2} \|u\|_{L^2(D)}^2,
\end{align}
where $\alpha \ge 0$ is a regularization parameter, $y_d \in L^2(D)$, $T \in \mathcal{L}(L^2(D))$.
\item (State Constraint) Given $\psi \in C(\overline{\Omega \times D})$ for which there exists $\delta>0$ such that $\psi_{\partial D(\omega)} \le -\delta$ $\mathbb{P}$-a.s, the state constraint is defined as
\begin{align}\label{eq:state_constraint}
 S u + y_f &\ge \psi & (\mathcal{L} \times \mathbb{P})\text{-a.e.}
\end{align}
\item (Feasibility) There exists $u \in \mathcal{U}_{ad}$ such that \eqref{eq:state_constraint} holds.
\end{itemize}
\end{assumption}
Now we are to solve the following constrained optimization problem:
\begin{align}\label{eq:problema}
\min~ & \mathbb{E}_{\mathbb{P}}\left[J(y(\omega), u)\right] & \text{over } (u,y) \in \mathcal{U}_{ad} \times \mathcal{Y} \\
\text{s.t.}~ & \mathbf{A}(\omega) y = \mathbf{B}(\omega) u + f(\omega) & \mathbb{P}\text{-a.s.} \\
& y \ge \psi & (\mathcal{L} \times \mathbb{P})\text{-a.e.} \label{eq:problemc}
\end{align}

\begin{remark}
Under Assumptions~\ref{as:elliptic} and \ref{as:cost}, \cite[Theorem~4]{Surowiec-risk-2022} shows that a solution $u^*$ to \eqref{eq:problema}--\eqref{eq:problemc} exists, and is unique if $\alpha>0$.
Moreover, if the Slater condition (Assumption~\ref{ass:slater} below) holds, there also exists a Lagrange multiplier variable in a form of a measure $\mu^* \in \mathbf{ba}(\Xi, \mathcal{B}, \pi)$.
\end{remark}

%%%%%%%%%%%%%%%%%%%%%%%%%%
\section{Smoothed Moreau-Yosida approximation}
\label{s:MY}
%%%%%%%%%%%%%%%%%%%%%%%%%%
Solving \eqref{eq:problema}--\eqref{eq:problemc}
involves computation of the indicator function of an active set and/or Lagrange multiplier
as a random field that is nonnegative on a complicated high-dimensional domain.
This may be difficult for many function approximation methods, e.g. the tensor decompositions considered in this paper.
We tackle this difficulty by first turning the constrained optimization problem
\eqref{eq:problema}--\eqref{eq:problemc} into an unconstrained optimization problem with the Moreau-Yosida
penalty, and further by smoothing the indicator function in the penalty term.

The classical Moreau-Yosida problem reads, with $\gamma \ge 0$ denoting the penalty parameter,
\begin{align*}
\min_{u^{\gamma} \in \mathcal{U}_{ad}}~ & \mathbb{E}_{\mathbb{P}}\left[\frac{1}{2}\|T(Su^{\gamma} + y_f) - y_d\|_{L^2(D)}^2\right] + \frac{\alpha}{2} \|u^{\gamma}\|_{L^2(D)}^2 + \mathbb{E}_{\mathbb{P}}\left[\frac{\gamma}{2}\|(\psi - Su^{\gamma} - y_f)_+\|_{L^2(D)}^2 \right],
\end{align*}
where the so-called \emph{positive part} or \emph{ReLU} function $(\cdot)_+$ is defined as $(s)_+ = s$ if $s\ge 0$ and $0,$ otherwise.
Here, we have removed the need to optimize the Lagrange multiplier (corresponding to the inequality constraints) over the nonnegative cone, but the function approximation of a nonsmooth high-dimensional random field $(\psi - Su^{\gamma} - y_f)_+$ (and derivatives thereof) may still be inefficient.

For this reason, we replace the \emph{ReLU} function in the penalty term by a smoothed version. In this paper, we use the \emph{softplus} function (with a minus sign incorporated)
\begin{equation}\label{eq:softplus}
g_{\varepsilon}(s) = \varepsilon\cdot \log(1+\exp(-s/\varepsilon)) \in C^{\infty}(\mathbb{R}), \qquad g_0(s) = \lim_{\varepsilon\rightarrow 0} g_{\varepsilon}(x) = (-s)_+,
\end{equation}
although other (e.g. piecewise polynomial) functions are also possible \cite{KKunisch_DWachsmuth_2012a,Surowiec-vi-2022}.

The reduced-space Moreau-Yosida optimization problem reads
\begin{align}\label{eq:problemmy}
\min_{u^{\gamma} \in \mathcal{U}_{ad}}~ & \mathbb{E}_{\mathbb{P}}\left[\frac{1}{2}\|T(Su^{\gamma} + y_f) - y_d\|_{L^2(D)}^2\right] + \frac{\alpha}{2} \|u^{\gamma}\|_{L^2(D)}^2 + \mathbb{E}_{\mathbb{P}}\left[\frac{\gamma}{2}\|g_{\varepsilon_{\gamma}}(Su^{\gamma} + y_f - \psi)\|_{L^2(D)}^2 \right],
\end{align}
where we plugged in $T$ and $S$, and assume that the smoothing parameter $\varepsilon_{\gamma}$ depends on $\gamma$, so we keep only $\gamma$ in the superscript of $u^{\gamma}$.

We will need the following basic properties of the softplus function.
\begin{lemma}\label{lem:gprime}
For any $\varepsilon > 0,$ \eqref{eq:softplus} is convex and satisfies
$g_{\varepsilon}(s) \ge (-s)_+$ and $g_{\varepsilon}'(s) < 0$ for any $s\in \mathbb{R}$,
as well as $s g_{\varepsilon}(s) \le \varepsilon^2 \mathrm{e}^{-1}$ for $s \ge 0$.
\end{lemma}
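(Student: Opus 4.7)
The plan is to verify the four assertions by direct calculation, exploiting the explicit form of $g_\varepsilon$.

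First I would compute the derivatives. Writing $g_\varepsilon(s) = \varepsilon \log(1+e^{-s/\varepsilon})$ and differentiating yields
\[
g_\varepsilon'(s) = -\frac{e^{-s/\varepsilon}}{1+e^{-s/\varepsilon}} = -\frac{1}{1+e^{s/\varepsilon}},
\]
which is strictly negative for every $s\in\mathbb{R}$, giving the sign claim. A second differentiation produces $g_\varepsilon''(s) = \frac{1}{\varepsilon} \cdot \frac{e^{s/\varepsilon}}{(1+e^{s/\varepsilon})^2} > 0$, establishing convexity.

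For the lower bound $g_\varepsilon(s) \ge (-s)_+$, I would split on the sign of $s$. When $s \ge 0$, positivity of $g_\varepsilon$ is immediate from $\log(1+e^{-s/\varepsilon}) \ge 0$. When $s < 0$, I would use the monotonicity of $\log$ together with $1 + e^{-s/\varepsilon} \ge e^{-s/\varepsilon}$ to conclude
\[
g_\varepsilon(s) \ge \varepsilon \log\bigl(e^{-s/\varepsilon}\bigr) = -s = (-s)_+.
\]

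The last assertion is the only one that requires a small optimisation. Substituting $t = s/\varepsilon \ge 0$ reduces the inequality $s g_\varepsilon(s) \le \varepsilon^2 e^{-1}$ to showing $t \log(1+e^{-t}) \le e^{-1}$ for $t \ge 0$. Applying $\log(1+x) \le x$ with $x = e^{-t}$ gives $t \log(1+e^{-t}) \le t e^{-t}$, and the function $t \mapsto t e^{-t}$ on $[0,\infty)$ attains its maximum $e^{-1}$ at $t = 1$, which finishes the proof. None of the steps is a real obstacle; the only mildly delicate piece is recognising that the scaling $\varepsilon^2 e^{-1}$ comes precisely from maximising $t e^{-t}$ after the substitution, which is what makes the bound dimensionally consistent in $\varepsilon$ and will later allow coupling $\varepsilon$ to $\gamma$ through $\varepsilon = o(\gamma^{-1/2})$.
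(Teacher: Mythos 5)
Your proof is correct and follows essentially the same route as the paper: differentiate twice for the sign and convexity claims, split on the sign of $s$ with $1+e^{-s/\varepsilon}\ge e^{-s/\varepsilon}$ for the lower bound, and use $\log(1+x)\le x$ followed by maximising $te^{-t}$ for the final estimate. As a minor aside, your formula $g_\varepsilon'(s)=-\frac{1}{1+e^{s/\varepsilon}}$ is the correct one (the paper's displayed derivative has a sign slip in the exponent), though this does not affect either argument since only the negativity of $g_\varepsilon'$ is used.
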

\begin{proof}
Differentiating $g_{\varepsilon}(s)$ twice,
$$
g_{\varepsilon}'(s) = -\frac{1}{1+\exp(-s/\varepsilon)}, \qquad g_{\varepsilon}''(s) = \frac{1/\varepsilon}{(\exp(\frac{s}{2\varepsilon}) + \exp(-\frac{s}{2\varepsilon}))^2},
$$
we notice that $g_{\varepsilon}'(s) < 0$, and $g_{\varepsilon}''(s)>0$, so $g_{\varepsilon}$ is convex.
Using the monotonicity of the logarithm,
$$
g_{\varepsilon}(s) = \varepsilon\log\left(1+\exp(-s/\varepsilon)\right) \ge \left\{\begin{array}{ll} \varepsilon\log\left(\exp(-s/\varepsilon)\right) = -s = (-s)_+, & s\le 0, \\ 0 = (-s)_+, & s>0, \end{array}\right.
$$
we obtain the second claim.
Lastly, since $\mathrm{e}^s \ge 1 + s$, $\log(1+s) \le s$, and $s g_{\varepsilon}(s) \le s \varepsilon \exp(-s/\varepsilon)=:\hat g_{\varepsilon}(s)$ for $s \ge 0$.
Differentiating $\hat g_{\varepsilon}(s)$ gives $\hat g_{\varepsilon}'(s_*) = \varepsilon \exp(-s_*/\varepsilon) (1 - s_* / \varepsilon) = 0$,
so the maximizer is $s_* = \varepsilon$,
and the maximum is $\hat g_{\varepsilon}(s_*) = \varepsilon^2 \exp(-1)$.
\end{proof}

\subsection{Strong convergence in the linear-quadratic case}
We prove the strong convergence as $\gamma \rightarrow \infty$ and $\varepsilon \rightarrow 0$ depending on $\gamma$ in a particular manner in the simplified case of a linear elliptic PDE as a constraint, and a quadratic cost functional.
We will also need the following assumptions.
\begin{assumption}[Slater condition]
\label{ass:slater}
There exists  $\delta>0$ such that there exists  $v \in \mathcal{U}_{ad}$ that satisfies
\begin{equation}\label{eq:slater}
S v + y_f \ge \psi + \delta \qquad \mathbb{P}\text{-a.s, a.e. }D.
\end{equation}
\end{assumption}
\begin{assumption}[Higher parametric regularity]\label{as:par-reg}
\begin{itemize}
 \item $\Omega$ is a compact Polish space.
 \item $S(\cdot) + y_f$ is a continuous affine mapping from $L^2(D)$ to $C(\Omega; H_0^1(D) \cap H^2(D))$. 
\end{itemize}
\end{assumption}

The existence and characterization of solution to \eqref{eq:problemmy} emerge from the same arguments as those in \cite[Theorems~5,7]{Surowiec-risk-2022}.
If $u^{\gamma}$ is a solution to \eqref{eq:problemmy}, then the necessary and sufficient optimality conditions read
\begin{align}\label{eq:kkt1}
0 & \in \mathbb{E}_{\mathbb{P}}\left[\mathbf{B}^* \mathbf{A}^{-*} T^* (T S u^{\gamma} + T y_f - y_d)\right] + \alpha u^{\gamma} + \mathcal{N}_{\mathcal{U}_{ad}}(u^{\gamma}) + \mathbb{E}_{\mathbb{P}}\left[\mathbf{B}^* \mathbf{A}^{-*} \mu^{\gamma}\right],
\end{align}
where
\begin{align}\label{eq:mugamma}
\mu^{\gamma} & = \gamma g_{\varepsilon_{\gamma}}'(S u^{\gamma} + y_f - \psi) g_{\varepsilon_{\gamma}}(S u^{\gamma} + y_f - \psi).
\end{align}
Introducing the adjoint state $\lambda^{\gamma}$ solving the adjoint equations 
\begin{align}
\int_D A(x,\omega) \nabla \lambda^{\gamma} \cdot \nabla \varphi(x) dx & = \int_D \left(T^*\left(T S u^{\gamma} + T y_f - y_d\right) + \mu^{\gamma}\right) \varphi(x) dx & \mathbb{P}\text{-a.s. }\forall\phi\in H^1_0(D),
\end{align}
we can simplify \eqref{eq:kkt1} to
\begin{align}\label{eq:kkt2}
0 & \in \mathbb{E}_{\mathbb{P}}\left[\mathbf{B}^* \lambda^{\gamma}\right] + \alpha u^{\gamma} + \mathcal{N}_{\mathcal{U}_{ad}}(u^{\gamma}).
\end{align}

Now we are ready to formulate the convergence theorem.
\begin{theorem}\label{thm:conv}
 Suppose the problem \eqref{eq:problema}--\eqref{eq:problemc} has a quadratic cost and linear elliptic PDE constraints satisfying Assumptions~\ref{as:cost} and \ref{as:elliptic}, and in addition the Slater~\eqref{eq:slater}, 
 and regularity (Assumption~\ref{as:par-reg}) conditions hold. 
 Then there exist sequences $\gamma_k \rightarrow \infty$ and $\varepsilon_{\gamma_k} = o(\gamma_k^{-1/2}) \rightarrow 0$ such that
 \begin{itemize}
  \item $\{u^{\gamma_k}\}_{k \in \mathbb{N}} \subset L^2(D)$,
  \item $\{y^{\gamma_k}\}_{k \in \mathbb{N}} \subset L^2_{\mathbb{P}}(\Omega; H_0^1(D) \cap H^2(D))$,
  \item $\{\lambda^{\gamma_k}\}_{k \in \mathbb{N}} \subset L^2_{\mathbb{P}}(\Omega; H_0^1(D) \cap H^2(D))$,
  \item $\{\eta^{\gamma_k}\}_{k \in \mathbb{N}} \subset L^2(D)$,
  \item $\{\mu^{\gamma_k}\}_{k \in \mathbb{N}} \subset L^2_{\pi}(\Xi)$, 
 \end{itemize}
such that $(u^{\gamma_k}, y^{\gamma_k}, \lambda^{\gamma_k}, \eta^{\gamma_k}, \mu^{\gamma_k})$ satisfy \eqref{eq:kkt1}.
The sequence admits a limit point
$$
(u^{*}, y^{*}, \Lambda^{*}, \eta^{*}, \rho^{*}) \in L^2(D) \times L^2_{\mathbb{P}}(\Omega; H_0^1(D) \cap H^2(D)) \times L^2(D) \times L^2(D) \times \mathcal{M}(\overline{\Xi}),
$$
where
\begin{align}
u^{\gamma_k} & \rightarrow u^* & \text{in~} & L^2(D), \label{eq:uconv} \\
y^{\gamma_k} & \rightarrow y^* & \text{in~} & C(\Omega; H^1_0(D) \cap H^2(D)), \label{eq:yconv} \\
\mu^{\gamma_k} & \rightharpoonup^* \rho^* & \text{in~} &  \mathcal{M}(\overline{\Xi}), \label{eq:muconv} \\
\mathbb{E}_{\mathbb{P}}[B^* \lambda^{\gamma_k}] & \rightharpoonup \Lambda^* & \text{in~} & L^2(D), \label{eq:lambconv} \\
\eta^{\gamma_k} & \rightharpoonup \eta^* & \text{in~} & L^2(D). \label{eq:etaconv}
\end{align}
Moreover, the limit point satisfies
\begin{align}
u^* & \in \mathcal{U}_{ad} \label{eq:u*adm} \\
y^* & = S u^* + y_f & \text{and~} & y^* \ge \psi \label{eq:y*} \\
(\Lambda^*, \varphi) & = \left(\mathbb{E}_{\mathbb{P}}\left[\mathbf{B}^* \mathbf{A}^{-*} T^* (T (y^* + y_f) - y_d)\right], \varphi\right) \nonumber \\
& + \int_{\Xi} \mathbf{A}^{-1}(\omega) \mathbf{B}(\omega) \varphi d\rho^*(x,\omega) \label{eq:lambd*} \\
0 & = (\Lambda^*, \varphi) + \alpha (u^*, \varphi) + (\eta^*, \varphi) & \text{and~} & \eta^* \in \mathcal{N}_{\mathcal{U}_{ad}}(u^*) \label{eq:eta*}
\end{align}
for an arbitrary test function $\varphi \in L^2(D)$, and
\begin{align}
\langle \phi, \rho^* \rangle & \le 0, & \forall \phi \in C(\overline{\Xi}):~\phi \ge 0, \label{eq:rho*neg} \\
\langle \psi - (y^* + y_f), \rho^* \rangle & = 0. \label{eq:rho*constr}
\end{align}
\end{theorem}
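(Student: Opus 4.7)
The plan is to follow the classical Moreau--Yosida plus Slater constraint qualification strategy, while paying close attention to two sign regimes for $s^\gamma := Su^\gamma+y_f-\psi$: on $\{s^\gamma\ge 0\}$ (feasibility) the softplus $g_\varepsilon$ is a genuine smoothing of $(\cdot)_+$ that costs us an $\varepsilon^2$-type error, and on $\{s^\gamma<0\}$ (violation) $g_\varepsilon$ still dominates $(-s)_+$ by Lemma~\ref{lem:gprime}. The coupling $\varepsilon_\gamma=o(\gamma^{-1/2})$ will appear precisely to make $\gamma\varepsilon_\gamma^2\to 0$.

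First, I would derive a priori bounds by evaluating \eqref{eq:problemmy} at the Slater point $v$. Since $Sv+y_f-\psi\ge\delta>0$, the elementary inequality $g_\varepsilon(s)\le \varepsilon \exp(-s/\varepsilon)$ (proved inside Lemma~\ref{lem:gprime}) gives $\gamma\|g_{\varepsilon_\gamma}(Sv+y_f-\psi)\|_{L^2_\pi}^2\le C\gamma\varepsilon_\gamma^2 e^{-2\delta/\varepsilon_\gamma}\to 0$, so the MY cost at $v$ stays bounded, hence so does it at $u^\gamma$. This yields: (i) $\{u^\gamma\}$ bounded in $L^2(D)$ (either by $\alpha>0$ or by boundedness of $\mathcal{U}_{ad}$); (ii) $\gamma\|g_{\varepsilon_\gamma}(s^\gamma)\|^2_{L^2_\pi}\le C$. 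Extracting a subsequence $u^{\gamma_k}\rightharpoonup u^*$ in $L^2(D)$, Assumption~\ref{as:par-reg} (continuity of $S(\cdot)+y_f$ into $C(\Omega;H_0^1\cap H^2)$) together with the compactness afforded by $S$ gives $y^{\gamma_k}\to y^*$ strongly in $C(\Omega;H_0^1\cap H^2)$. Using $g_\varepsilon(s)\ge(-s)_+$ in (ii) one gets $\|(\psi-y^{\gamma_k})_+\|_{L^2_\pi}\le C\gamma_k^{-1/2}\to 0$, so $y^*\ge\psi$.

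The main technical step is to bound the multiplier $\mu^{\gamma}$. Note $-\mu^\gamma=\gamma|g_{\varepsilon_\gamma}'(s^\gamma)|\,g_{\varepsilon_\gamma}(s^\gamma)\ge 0$. Using the Slater inequality $Sv+y_f-\psi\ge\delta$,
\begin{equation*}
\delta\,\|\mu^\gamma\|_{L^1_\pi}\le\int_{\Xi}(-\mu^\gamma)(Sv+y_f-\psi)\,d\pi
=\bigl(\mathbb{E}_{\mathbb{P}}[\mathbf{B}^*\mathbf{A}^{-*}(-\mu^\gamma)],\,v-u^\gamma\bigr)+\int_\Xi(-\mu^\gamma)s^\gamma\,d\pi.
\end{equation*}
The first term is bounded by \eqref{eq:kkt1}, $\eta^\gamma\in\mathcal{N}_{\mathcal{U}_{ad}}(u^\gamma)$ (so $\langle\eta^\gamma,v-u^\gamma\rangle\le 0$), and boundedness of $\|u^\gamma\|$, $\|y^\gamma\|$. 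The second splits as $\int_{\{s^\gamma\ge 0\}}\le\gamma\varepsilon_\gamma^2|\Xi|/e$ by Lemma~\ref{lem:gprime}, and $\int_{\{s^\gamma<0\}}(-\mu^\gamma)s^\gamma\le 0$ (wrong sign, but bounded in magnitude by $\gamma\|(\psi-y^\gamma)_+\|^2\le C$). Hence $\mu^{\gamma_k}\rightharpoonup^*\rho^*$ in $\mathcal{M}(\overline\Xi)$ after subsequence, and since $\mu^{\gamma_k}\le 0$ pointwise we obtain \eqref{eq:rho*neg}. Passage to the limit in \eqref{eq:kkt1} now produces \eqref{eq:lambd*}–\eqref{eq:eta*}, with $\eta^{\gamma_k}\rightharpoonup\eta^*\in\mathcal{N}_{\mathcal{U}_{ad}}(u^*)$ by closedness of the normal cone under weak$\times$strong convergence (which will require having established strong convergence of $u^{\gamma_k}$ below).

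The remaining pieces are the complementary slackness \eqref{eq:rho*constr} and the strong convergence \eqref{eq:uconv}, which I expect to be the trickiest step because both rely on the precise choice $\varepsilon_\gamma=o(\gamma^{-1/2})$. For slackness, compute $\langle y^{\gamma_k}-\psi,\mu^{\gamma_k}\rangle=\int s^{\gamma_k}\mu^{\gamma_k}\,d\pi$: on $\{s^{\gamma_k}<0\}$ the integrand is $\ge 0$ (same sign), and on $\{s^{\gamma_k}\ge 0\}$ it is $\ge -\gamma_k\varepsilon_{\gamma_k}^2|\Xi|/e\to 0$, so $\liminf\langle y^{\gamma_k}-\psi,\mu^{\gamma_k}\rangle\ge 0$; strong convergence $y^{\gamma_k}\to y^*$ in $C(\overline\Xi)$ and weak-$*$ convergence of $\mu^{\gamma_k}$ yield $\langle y^*-\psi,\rho^*\rangle\ge 0$, and the opposite inequality $\langle y^*-\psi,\rho^*\rangle\le 0$ is immediate from $y^*-\psi\ge 0$ and $\rho^*\le 0$, giving \eqref{eq:rho*constr}. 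For strong convergence of $u^{\gamma_k}$, I would introduce the convex combination $u_t=(1-t)u^*+tv$, which is strictly feasible with slack $t\delta$, use it as a competitor to obtain $\limsup_k J^{MY}_{\gamma_k}(u^{\gamma_k})\le J(u_t)$ (the penalty at $u_t$ vanishes as $\gamma_k\varepsilon_{\gamma_k}^2 e^{-2t\delta/\varepsilon_{\gamma_k}}\to 0$), let $t\to 0^+$, and combine with weak lower semicontinuity to deduce $J(u^{\gamma_k})\to J(u^*)$, whence $\|u^{\gamma_k}\|\to\|u^*\|$ (using $\alpha>0$ and strong convergence of $Ty^{\gamma_k}$), producing strong convergence in the Hilbert space $L^2(D)$.
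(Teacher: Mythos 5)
Your proposal follows essentially the same route as the paper: the same a priori bounds obtained from the Slater competitor via $\gamma\varepsilon_\gamma^2$ and Lemma~\ref{lem:gprime}, the same convex-combination trick $u_t=(1-t)u^*+tv$ for the strong convergence of $u^{\gamma_k}$, the same Slater-plus-KKT bound for $\|\mu^{\gamma}\|_{L^1_{\pi}(\Xi)}$ (yours phrased a little more directly by exploiting the pointwise sign of $\mu^{\gamma}$ and the slack $\delta$, where the paper tests against $v\in\mathbb{B}_{\delta}(0)\subset L^{\infty}_{\pi}(\Xi)$ and invokes Fenchel--Moreau), and the same sign/complementarity argument built on $s\,g_{\varepsilon}(s)\le\varepsilon^2\mathrm{e}^{-1}$. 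The only slip is one of ordering: the strong convergence $y^{\gamma_k}\to y^*$ in $C(\Omega;H^1_0(D)\cap H^2(D))$ follows from Assumption~\ref{as:par-reg} only once $u^{\gamma_k}\to u^*$ strongly in $L^2(D)$ (continuity of the affine map does not upgrade weak convergence), so that claim should be deferred until after \eqref{eq:uconv} is proved, as the paper does --- but since you establish \eqref{eq:uconv} independently of it, this is a reordering rather than a gap.
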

Similarly to \cite{Surowiec-risk-2022}, the proof is split into the following lemmas.
\begin{lemma}\label{lem:weakconv}
Under the assumptions of Thm.~\ref{thm:conv} there exist sequences $\gamma_k \rightarrow \infty$ and $\varepsilon_{\gamma_k} = \mathcal{O}(\gamma_k^{-1/2}) \rightarrow 0$ such that the sequence of solutions $u^{\gamma_k}$ to \eqref{eq:problemmy} converges weakly to a feasible solution, satisfying \eqref{eq:u*adm}--\eqref{eq:y*}.
\end{lemma}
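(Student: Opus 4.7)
My plan is to follow the classical Moreau--Yosida penalty argument, with the softplus smoothing handled through the Slater condition. The structure has three steps: extract a weak limit, bound the penalty using the Slater point, and deduce feasibility in the limit.

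\textbf{Weak limit.} Since $\mathcal{U}_{ad}$ is bounded by Assumption~\ref{as:cost}, the sequence $\{u^{\gamma_k}\} \subset \mathcal{U}_{ad}$ is bounded in $L^2(D)$ uniformly in $\gamma_k$. Weak compactness yields a subsequence (not relabeled) with $u^{\gamma_k} \rightharpoonup u^*$ in $L^2(D)$, and $u^* \in \mathcal{U}_{ad}$ because closed convex sets are weakly closed; this gives \eqref{eq:u*adm}. The complete continuity of $S:L^2(D) \to L^2_\mathbb{P}(\Omega; H_0^1(D))$ stated in Section~\ref{linearrandpde} upgrades the weak convergence to strong convergence of the states $y^{\gamma_k} := S u^{\gamma_k} + y_f \to S u^* + y_f =: y^*$ in that space, and in particular $y^{\gamma_k} \to y^*$ in $L^2_\pi(\Xi)$ via Poincar\'e.

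\textbf{Penalty bound via the Slater point.} Comparing the optimal Moreau--Yosida objective at $u^{\gamma_k}$ with that at the Slater point $v$ from Assumption~\ref{ass:slater}, and using that $J$ is bounded below, I obtain
\begin{equation*}
\frac{\gamma_k}{2}\, \mathbb{E}_\mathbb{P}\bigl[\|g_{\varepsilon_{\gamma_k}}(y^{\gamma_k} - \psi)\|_{L^2(D)}^2\bigr] \le C + \frac{\gamma_k |D|}{2}\, \bigl(g_{\varepsilon_{\gamma_k}}(\delta)\bigr)^2,
\end{equation*}
where I used that $g_\varepsilon$ is decreasing by Lemma~\ref{lem:gprime} and $Sv + y_f - \psi \ge \delta$. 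Since $g_\varepsilon(\delta) = \varepsilon\log(1 + e^{-\delta/\varepsilon}) \le \varepsilon e^{-\delta/\varepsilon}$, the choice $\varepsilon_{\gamma_k} = \mathcal{O}(\gamma_k^{-1/2})$ keeps $\gamma_k \varepsilon_{\gamma_k}^2$ bounded while $e^{-2\delta/\varepsilon_{\gamma_k}} \to 0$ (exponential beats polynomial), so the right-hand side is uniformly bounded. Dividing through by $\gamma_k$ gives $\mathbb{E}_\mathbb{P}[\|g_{\varepsilon_{\gamma_k}}(y^{\gamma_k} - \psi)\|_{L^2(D)}^2] = \mathcal{O}(\gamma_k^{-1}) \to 0$.

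\textbf{Feasibility of the limit.} The pointwise inequality $g_\varepsilon(s) \ge (-s)_+$ from Lemma~\ref{lem:gprime} yields $\|(\psi - y^{\gamma_k})_+\|_{L^2_\pi(\Xi)} \to 0$. Separately, Lipschitz continuity of the positive part combined with strong convergence $y^{\gamma_k} \to y^*$ in $L^2_\pi(\Xi)$ gives $(\psi - y^{\gamma_k})_+ \to (\psi - y^*)_+$ in $L^2_\pi(\Xi)$. Uniqueness of the strong limit forces $(\psi - y^*)_+ = 0$ $\pi$-a.e., i.e., $y^* \ge \psi$, which together with $y^* = Su^* + y_f$ establishes \eqref{eq:y*}. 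The main obstacle is calibrating $\varepsilon_{\gamma_k}$ so that the penalty of the softplus at the Slater point stays bounded uniformly in $k$; the crucial ingredient is the exponential decay of $g_\varepsilon(\delta)$ in $\delta/\varepsilon$, which makes the comfortably polynomial rate $\varepsilon_{\gamma_k} = \mathcal{O}(\gamma_k^{-1/2})$ sufficient to defeat the growing factor $\gamma_k$ in front of the penalty.
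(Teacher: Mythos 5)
Your proposal is correct and follows essentially the same route as the paper: weak compactness of $\mathcal{U}_{ad}$ to extract $u^{\gamma_k}\rightharpoonup u^*$, comparison of the Moreau--Yosida objective with its value at a Slater point to bound the penalty uniformly under $\varepsilon_{\gamma_k}=\mathcal{O}(\gamma_k^{-1/2})$, the bound $g_\varepsilon(s)\ge(-s)_+$ from Lemma~\ref{lem:gprime} to force the ReLU penalty to zero, and complete continuity of $S$ to pass to the limit and conclude $Su^*+y_f\ge\psi$. The only cosmetic difference is that you bound the Slater-point penalty by $g_{\varepsilon}(\delta)\le\varepsilon e^{-\delta/\varepsilon}$ rather than the paper's coarser $g_{\varepsilon}(0)=\varepsilon\log 2$; both suffice for the stated choice of $\varepsilon_{\gamma_k}$.
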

\begin{proof}
Since $\mathcal{U}_{ad}$ is weakly compact in $L^2(D)$ by Assumption~\ref{as:cost} and $u^{\gamma} \in \mathcal{U}_{ad}$ for any $\gamma>0$, for any sequence $\gamma_n \rightarrow \infty$ there exists a subsequence $\gamma_k:=\gamma_{n_k}$ and some $u^* \in \mathcal{U}_{ad}$ such that $u^{\gamma_k} \rightharpoonup u^*$ in $L^2(D)$.
By non-negativity of the Moreau-Yosida term (with any $\varepsilon\ge 0$),
\begin{align*}
& \mathbb{E}_{\mathbb{P}}\left[\frac{1}{2}\|T (S u^{\gamma_k} + y_f) - y_d\|_{L^2(D)}^2\right] + \frac{\alpha}{2}\|u^{\gamma_k}\|_{L^2(D)}^2 \\
& \le \mathbb{E}_{\mathbb{P}}\left[\frac{1}{2}\|T (S u^{\gamma_k} + y_f) - y_d\|_{L^2(D)}^2\right] + \frac{\alpha}{2}\|u^{\gamma_k}\|_{L^2(D)}^2 + \mathbb{E}_{\mathbb{P}}\left[\frac{\gamma_k}{2}\|(\psi - Su^{\gamma_k} - y_f)_+\|_{L^2(D)}^2\right] \\
& \le \mathbb{E}_{\mathbb{P}}\left[\frac{1}{2}\|T (S u^{\gamma_k} + y_f) - y_d\|_{L^2(D)}^2\right] + \frac{\alpha}{2}\|u^{\gamma_k}\|_{L^2(D)}^2 + \mathbb{E}_{\mathbb{P}}\left[\frac{\gamma_k}{2}\|g_{\varepsilon_{\gamma}}(Su^{\gamma_k} + y_f - \psi)\|_{L^2(D)}^2\right],
\end{align*}
where in the last line we used Lemma~\ref{lem:gprime}.

Due to the Slater condition \eqref{eq:slater}, there exists a $\delta>0$ and a sequence $v^{\gamma_k} \in \mathcal{U}_{ad}$ such that $S v^{\gamma_k} + y_f \ge \psi + \delta$ $\Xi$-a.s.
Again since $\mathcal{U}_{ad}$ is weakly compact, $v^{\gamma_k}$ is uniformly bounded in $L^2(D)$.
Since $u^{\gamma}$ is a minimizer in \eqref{eq:problemmy},
\begin{align*}
& \mathbb{E}_{\mathbb{P}}\left[\frac{1}{2}\|T (S u^{\gamma_k} + y_f) - y_d\|_{L^2(D)}^2\right] + \frac{\alpha}{2}\|u^{\gamma_k}\|_{L^2(D)}^2 + \mathbb{E}_{\mathbb{P}}\left[\frac{\gamma_k}{2}\|g_{\varepsilon_{\gamma}}(Su^{\gamma_k} + y_f - \psi)\|_{L^2(D)}^2\right] \\
& \le \mathbb{E}_{\mathbb{P}}\left[\frac{1}{2}\|T (S v^{\gamma_k} + y_f) - y_d\|_{L^2(D)}^2\right] + \frac{\alpha}{2}\|v^{\gamma_k}\|_{L^2(D)}^2 + \mathbb{E}_{\mathbb{P}}\left[\frac{\gamma_k}{2}\|g_{\varepsilon_{\gamma}}(Sv^{\gamma_k} + y_f - \psi)\|_{L^2(D)}^2\right].
\end{align*}
However, due to the Slater condition the last term is bounded by (also using that $g_\varepsilon$ is a decreasing function)
$$
\mathbb{E}_{\mathbb{P}}\left[\frac{\gamma_k}{2}\|g_{\varepsilon_{\gamma}}(Sv^{\gamma_k} + y_f - \psi)\|_{L^2(D)}^2\right] \le \mathbb{E}_{\mathbb{P}}\left[\frac{\gamma_k}{2} g_{\varepsilon_{\gamma_k}}(0)^2 \|1\|_{L^2(D)}^2 \right] = \frac{\gamma_k}{2} \varepsilon_{\gamma_k}^2 |D| \log^2 2 =: M_k.
$$
Choosing $\varepsilon_{\gamma_k} = \mathcal{O}(\gamma_k^{-1/2})$ we make $M_k \le M_* < \infty$.
Since $S$ is completely continuous into $L^2_{\pi}(\Xi)$ and $\mathcal{U}_{ad}$ is bounded, there exists $M < \infty$ such that
$$
\mathbb{E}_{\mathbb{P}}\left[\frac{1}{2}\|T (S v^{\gamma_k} + y_f) - y_d\|_{L^2(D)}^2\right] + \frac{\alpha}{2}\|v^{\gamma_k}\|_{L^2(D)}^2 \le M.
$$
This makes
$$
\mathbb{E}_{\mathbb{P}}\left[\frac{1}{2}\|T (S u^{\gamma_k} + y_f) - y_d\|_{L^2(D)}^2\right] + \frac{\alpha}{2}\|u^{\gamma_k}\|_{L^2(D)}^2 \le M + M_k < \infty.
$$
Since the cost function is weak lower semicontinuous,
this bound holds also for the limit
\begin{align*}
& \mathbb{E}_{\mathbb{P}}\left[\frac{1}{2}\|T (S u^{*} + y_f) - y_d\|_{L^2(D)}^2\right] + \frac{\alpha}{2}\|u^{*}\|_{L^2(D)}^2 \\
& \le \liminf_{k \rightarrow \infty} \left(\mathbb{E}_{\mathbb{P}}\left[\frac{1}{2}\|T (S u^{\gamma_k} + y_f) - y_d\|_{L^2(D)}^2\right] + \frac{\alpha}{2}\|u^{\gamma_k}\|_{L^2(D)}^2\right) \le M+M_*.
\end{align*}
Similarly, $\mathbb{E}_{\mathbb{P}}\left[\frac{\gamma_k}{2}\|(\psi - Su^{\gamma_k} - y_f)_+\|_{L^2(D)}^2\right]$ is bounded (using Lemma~\ref{lem:gprime}), which means
$$
\mathbb{E}_{\mathbb{P}}\left[\|(\psi - Su^{\gamma_k} - y_f)_+\|_{L^2(D)}^2\right] \rightarrow 0 \quad \text{as} \quad \gamma_k \rightarrow \infty.
$$
Since $Su^{\gamma_k} \rightarrow S u^*$ (strongly) in $L^2_{\pi}(\Xi)$ (due to complete continuity of $S$), we have
$$
\mathbb{E}_{\mathbb{P}}\left[\|(\psi - Su^{\gamma_k} - y_f)_+\|_{L^2(D)}^2\right] \rightarrow \mathbb{E}_{\mathbb{P}}\left[\|(\psi - Su^{*} - y_f)_+\|_{L^2(D)}^2\right] = 0.
$$
Thus, $u^* \in \mathcal{U}_{ad}$ such that $Su^* + y_f \ge \psi$ $\pi$-a.e.
\end{proof}
\begin{lemma}
Under the assumptions of Thm.~\ref{thm:conv}, there exists a smoothing parameter $\varepsilon_{\gamma} = o(\gamma^{-1/2}) \rightarrow 0$ as $\gamma \rightarrow \infty$ such that \eqref{eq:uconv} (strong convergence of control) holds.
\end{lemma}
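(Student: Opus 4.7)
The plan is to upgrade the weak convergence $u^{\gamma_k} \rightharpoonup u^*$ from Lemma~\ref{lem:weakconv} to strong convergence in the Hilbert space $L^2(D)$. Since weak convergence together with norm convergence implies strong convergence in a Hilbert space, it suffices to prove $\|u^{\gamma_k}\|_{L^2(D)} \to \|u^*\|_{L^2(D)}$.

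To drive norm convergence, I would use the \emph{true} constrained optimum $\bar u$ of \eqref{eq:problema}--\eqref{eq:problemc} as a competitor in the Moreau-Yosida minimization property for $u^{\gamma_k}$:
\begin{align*}
\mathbb{E}_{\mathbb{P}}\left[\tfrac{1}{2}\|T(Su^{\gamma_k} + y_f) - y_d\|^2\right] + \tfrac{\alpha}{2}\|u^{\gamma_k}\|^2 & + \mathbb{E}_{\mathbb{P}}\left[\tfrac{\gamma_k}{2}\|g_{\varepsilon_{\gamma_k}}(Su^{\gamma_k}+y_f-\psi)\|^2\right] \\
& \le \mathbb{E}_{\mathbb{P}}[J(S\bar u + y_f, \bar u)] + \mathbb{E}_{\mathbb{P}}\left[\tfrac{\gamma_k}{2}\|g_{\varepsilon_{\gamma_k}}(S\bar u+y_f-\psi)\|^2\right].
\end{align*}
The key observation is that, because $\bar u$ is feasible, $S\bar u + y_f - \psi \ge 0$ a.e., and since $g_{\varepsilon}$ is decreasing (Lemma~\ref{lem:gprime}) with $g_{\varepsilon}(0) = \varepsilon\log 2$, the penalty at the competitor is pointwise bounded by $\varepsilon_{\gamma_k}\log 2$. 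Integration yields
\begin{align*}
\mathbb{E}_{\mathbb{P}}\left[\tfrac{\gamma_k}{2}\|g_{\varepsilon_{\gamma_k}}(S\bar u+y_f-\psi)\|^2\right] \le \tfrac{\gamma_k}{2}\varepsilon_{\gamma_k}^2 (\log 2)^2 |D|,
\end{align*}
which vanishes precisely when $\varepsilon_{\gamma_k} = o(\gamma_k^{-1/2})$, one power of $\gamma_k^{-1/2}$ sharper than the mere boundedness used in Lemma~\ref{lem:weakconv}.

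Discarding the nonnegative penalty on the left and taking $\limsup$ gives $\limsup_k \mathbb{E}_{\mathbb{P}}[J(Su^{\gamma_k} + y_f, u^{\gamma_k})] \le \mathbb{E}_{\mathbb{P}}[J(S\bar u + y_f, \bar u)]$. The reverse inequality $\mathbb{E}_{\mathbb{P}}[J(Su^* + y_f, u^*)] \le \liminf_k \mathbb{E}_{\mathbb{P}}[J(Su^{\gamma_k} + y_f, u^{\gamma_k})]$ follows from complete continuity of $S$ (which makes the tracking term continuous along the weak limit) combined with weak lower semi-continuity of $\|\cdot\|_{L^2(D)}^2$. Combining the two and invoking feasibility of $u^*$ (from Lemma~\ref{lem:weakconv}) together with optimality of $\bar u$ forces equality throughout, so $u^* = \bar u$ by strict convexity (for $\alpha>0$) and $\lim_k \mathbb{E}_{\mathbb{P}}[J(Su^{\gamma_k} + y_f, u^{\gamma_k})] = \mathbb{E}_{\mathbb{P}}[J(Su^* + y_f, u^*)]$. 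Since $Su^{\gamma_k} \to Su^*$ strongly in $L^2_{\pi}(\Xi)$, the tracking part of $J$ converges separately, leaving $\tfrac{\alpha}{2}\|u^{\gamma_k}\|^2 \to \tfrac{\alpha}{2}\|u^*\|^2$, which for $\alpha>0$ is the required norm convergence.

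The main technical obstacle lies in bounding the penalty at the feasible competitor: the softplus value at the free boundary $g_\varepsilon(0) = \varepsilon\log 2$ is already tight, and the sharper bound $s g_\varepsilon(s) \le \varepsilon^2/\mathrm{e}$ from Lemma~\ref{lem:gprime} decays faster but only when $s$ is bounded away from zero. Since no strict-feasibility margin is available for $\bar u$ itself (the Slater condition only supplies a different control $v$), this gap is irreducible, and it is precisely what forces the strengthening of the coupling from $\mathcal{O}(\gamma_k^{-1/2})$ to $o(\gamma_k^{-1/2})$ when upgrading from weak to strong convergence of the control.
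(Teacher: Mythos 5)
Your proof is correct, but it reaches \eqref{eq:uconv} by a genuinely different route: you compare against the true constrained optimum $\bar u$ of \eqref{eq:problema}--\eqref{eq:problemc}, whereas the paper compares against a recovery sequence $v^k(t_k)=t_k v^k+(1-t_k)u^*$ built from the Slater point $v^k$ and the weak limit $u^*$ of Lemma~\ref{lem:weakconv}, which converges strongly to $u^*$ and stays feasible by convexity of the constraint. The core mechanism is the same in both arguments: a feasible competitor gives the pointwise bound $g_{\varepsilon}\le g_{\varepsilon}(0)=\varepsilon\log 2$, hence the penalty bound $\gamma_k\varepsilon_{\gamma_k}^2\log^2 2\,|D|/2$, which vanishes exactly under the coupling $\varepsilon_{\gamma}=o(\gamma^{-1/2})$; then weak lower semicontinuity supplies the matching $\liminf$, complete continuity of $S$ peels off the tracking term, and weak plus norm convergence yields strong convergence in $L^2(D)$. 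Your route additionally identifies the limit, $u^*=\bar u$, which the paper's lemma does not assert at this stage, at the cost of invoking existence and optimality of $\bar u$ and strict convexity; the paper's route needs neither, only the feasibility of $u^*$ already established in Lemma~\ref{lem:weakconv}. Two minor observations: first, your final step extracts $\|u^{\gamma_k}\|\to\|u^*\|$ from convergence of the cost only when $\alpha>0$, but this limitation is shared by the paper, whose division by $\alpha/2$ is equally implicit even though Assumption~\ref{as:cost} only requires $\alpha\ge 0$; second, your closing remark is on target --- the Slater margin plays no quantitative role in this lemma (any feasible competitor already yields $g_{\varepsilon}\le\varepsilon\log 2$), and the strengthening from $\mathcal{O}(\gamma^{-1/2})$ to $o(\gamma^{-1/2})$ is forced solely by $g_{\varepsilon}(0)>0$, exactly as you say.
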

\begin{proof}
Due to the Slater condition, we can find a sequence $v^k \in \mathcal{U}_{ad}$ that is uniformly bounded in $L^2(D)$, and $S v^k + y_f \ge \psi + \delta$.
Let $v^k(t_k) = t_k v^k + (1-t_k) u^*$,
where $u^*$ is the limit point from the previous lemma, and $t_k = 2^{-k}$.
Clearly, $\|v^k(t_k) - u^*\|_{L^2(D)} \le t_k \|v^k - u^*\|_{L^2(D)} \rightarrow 0$ as $k \rightarrow \infty$, so $v^k(t_k)$ converges strongly to $u^*$ in $L^2(D)$.
Moreover, since $S$ is linear, 
$$
S v^k(t_k) + y_f = t_k (Sv^k + y_f) + (1-t_k) (S u^* + y_f) \ge t_k (\psi + \delta) + (1-t_k) \psi \ge \psi \quad \pi\text{-a.s.,}
$$
so $v^k(t_k)$ satisfies the constraint.
By the same arguments as in the previous lemma,
\begin{align*}
& \mathbb{E}_{\mathbb{P}}\left[\frac{1}{2}\|T (S u^{\gamma_k} + y_f) - y_d\|_{L^2(D)}^2\right] + \frac{\alpha}{2}\|u^{\gamma_k}\|_{L^2(D)}^2 \\
& \le \mathbb{E}_{\mathbb{P}}\left[\frac{1}{2}\|T (S v^k(t_k) + y_f) - y_d\|_{L^2(D)}^2\right] + \frac{\alpha}{2}\|v^k(t_k)\|_{L^2(D)}^2 + \mathbb{E}_{\mathbb{P}}\left[\frac{\gamma_k}{2}\|g_{\varepsilon_{\gamma}}(Sv^k(t_k) + y_f - \psi)\|_{L^2(D)}^2\right] \\
& \le \mathbb{E}_{\mathbb{P}}\left[\frac{1}{2}\|T (S v^k(t_k) + y_f) - y_d\|_{L^2(D)}^2\right] + \frac{\alpha}{2}\|v^k(t_k)\|_{L^2(D)}^2 + \gamma_k \varepsilon_{\gamma_k}^2 \frac{|D|}{2} \log^2 2.
\end{align*}
Choosing $\varepsilon_{\gamma} = o(\gamma^{-1/2})$ such that $\gamma_k \varepsilon_{\gamma_k}^2 \rightarrow 0$ as $k \rightarrow \infty$,
we pass to the following limit:
\begin{align*}
& \limsup_{k \rightarrow \infty} \mathbb{E}_{\mathbb{P}}\left[\frac{1}{2}\|T (S u^{\gamma_k} + y_f) - y_d\|_{L^2(D)}^2\right] + \frac{\alpha}{2}\|u^{\gamma_k}\|_{L^2(D)}^2 \\
& \le \limsup_{k \rightarrow \infty} \mathbb{E}_{\mathbb{P}}\left[\frac{1}{2}\|T (S v^k(t_k) + y_f) - y_d\|_{L^2(D)}^2\right] + \frac{\alpha}{2}\|v^k(t_k)\|_{L^2(D)}^2.
\end{align*}
Due to complete continuity of $S$, $S u^{\gamma_k} \rightarrow S u^*$ and $S v^k(t_k) \rightarrow S u^*$.
Thus, resolving the limits above,
\begin{align*}
& \mathbb{E}_{\mathbb{P}}\left[\frac{1}{2}\|T (S u^{*} + y_f) - y_d\|_{L^2(D)}^2\right] + \frac{\alpha}{2}\limsup_{k \rightarrow \infty} \|u^{\gamma_k}\|_{L^2(D)}^2 \\
& \le \mathbb{E}_{\mathbb{P}}\left[\frac{1}{2}\|T (S u^* + y_f) - y_d\|_{L^2(D)}^2\right] + \frac{\alpha}{2}\|u^*\|_{L^2(D)}^2.
\end{align*}
That is, $\limsup_{k \rightarrow \infty} \|u^{\gamma_k}\|_{L^2(D)}^2 \le \|u^*\|_{L^2(D)}^2$, whereas due to the weak convergence of $u^{\gamma_k}$ from the previous lemma, $\liminf_{k \rightarrow \infty} \|u^{\gamma_k}\|_{L^2(D)}^2 \ge \|u^*\|_{L^2(D)}^2$.
This gives $\lim_{k \rightarrow \infty} \|u^{\gamma_k}\|_{L^2(D)}^2 = \|u^*\|_{L^2(D)}^2$, and, together with the weak convergence, the strong convergence.
\end{proof}
\begin{remark}
Note that $\varepsilon_{\gamma} = o(\gamma^{-1/2})$ also satisfies $\varepsilon_{\gamma} = \mathcal{O}(\gamma^{-1/2})$ in Lemma~\ref{lem:weakconv}.
\end{remark}
\begin{lemma}
Under the assumptions of Thm.~\ref{thm:conv}, the state variable converges strongly to a feasible solution, i.e. \eqref{eq:yconv} and \eqref{eq:y*} hold.
\end{lemma}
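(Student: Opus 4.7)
The plan is to deduce both claims directly from what has already been established, since the heavy lifting (strong $L^2(D)$ convergence of the controls and $\pi$-a.e. feasibility of the limit) is done in the previous two lemmas. The only new ingredients are the higher parametric regularity Assumption~\ref{as:par-reg} and a standard Sobolev embedding.

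First I would prove \eqref{eq:yconv}. Define $y^{\gamma_k} := S u^{\gamma_k} + y_f$ and $y^* := S u^* + y_f$. By Assumption~\ref{as:par-reg} the affine map $u \mapsto S u + y_f$ is continuous from $L^2(D)$ into $C(\Omega; H_0^1(D) \cap H^2(D))$. Since the previous lemma gives $u^{\gamma_k} \to u^*$ strongly in $L^2(D)$, applying this continuous map yields
\[
\|y^{\gamma_k} - y^*\|_{C(\Omega; H_0^1(D) \cap H^2(D))} \;\le\; C\,\|u^{\gamma_k} - u^*\|_{L^2(D)} \;\to\; 0,
\]
which is exactly \eqref{eq:yconv}. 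In particular, $y^{\gamma_k}$ lies in $L^2_{\mathbb{P}}(\Omega; H_0^1(D)\cap H^2(D))$ as required.

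Next I would verify the feasibility statement $y^* \ge \psi$ in \eqref{eq:y*}. Lemma~\ref{lem:weakconv} already gave $y^* \ge \psi$ $\pi$-a.e., so the task is only to upgrade this to an everywhere inequality that is compatible with \eqref{eq:rho*constr}. Using Assumption~\ref{as:par-reg} together with the Sobolev embedding $H^2(D)\hookrightarrow C(\overline{D})$ (valid since $D$ is either a convex polyhedron or has $C^{1,1}$ boundary in $\mathbb{R}^d$ with $d\le 3$), and the compactness of the Polish space $\Omega$, we obtain $y^* \in C(\Omega; C(\overline{D})) = C(\overline{\Xi})$. Since $\psi \in C(\overline{\Xi})$ by Assumption~\ref{as:cost}, the difference $y^* - \psi$ is continuous on the compact set $\overline{\Xi}$, so non-negativity $\pi$-almost everywhere propagates to every point of $\overline{\Xi}$ by continuity.

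The only subtle point is ensuring that the Sobolev embedding truly delivers continuity up to the boundary jointly in $(x,\omega)$; this is where Assumption~\ref{as:par-reg} earns its keep, by providing continuity of $\omega \mapsto y^*(\omega,\cdot)$ with values in $H^2(D)$ and hence in $C(\overline{D})$, so that $(\omega, x) \mapsto y^*(\omega, x)$ is jointly continuous on the compact product $\Omega \times \overline{D}$. Apart from this routine embedding argument, the lemma is a direct corollary of the strong control convergence proved above and the feasibility already obtained in Lemma~\ref{lem:weakconv}.
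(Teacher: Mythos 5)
Your proposal is correct and follows essentially the same route as the paper, whose entire proof is the observation that $S(\cdot)+y_f$ is continuous from $L^2(D)$ into $C(\Omega;H^1_0(D)\cap H^2(D))$ (Assumption~\ref{as:par-reg}) and that $u^{\gamma_k}\to u^*$ strongly in $L^2(D)$. Your additional paragraph upgrading the $\pi$-a.e.\ inequality $y^*\ge\psi$ to a pointwise one on $\overline{\Xi}$ via the embedding $H^2(D)\hookrightarrow C(\overline D)$ is a harmless (and arguably useful) elaboration that the paper leaves implicit, though note the embedding restricts to $d\le 3$, a hypothesis the paper does not state explicitly.
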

\begin{proof}
This follows trivially from the fact that $S(\cdot) + y_f: L^2(D) \rightarrow C(\Omega; H^1_0(D) \cap H^2(D))$ is continuous, and $u^{\gamma_k}$ converges strongly in $L^2(D)$.
\end{proof}
\begin{lemma}
Under the assumptions of Thm.~\ref{thm:conv}, there exists a sequence of weakly* converging multipliers $\mu^{\gamma_k}$, i.e. \eqref{eq:muconv} holds.
\end{lemma}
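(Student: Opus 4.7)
The plan is to establish that $\{\mu^{\gamma_k}\}$ is uniformly bounded in $L^1_\pi(\Xi)$, which embeds continuously into $\mathcal{M}(\overline{\Xi})$, and then extract a weakly* convergent subsequence by Banach--Alaoglu (using that $C(\overline{\Xi})$ is separable since $\overline{\Xi}$ is compact metrizable by Assumption~\ref{as:par-reg}).

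The first step is a sign observation. From Lemma~\ref{lem:gprime} we have $g'_{\varepsilon}<0$ and $g_{\varepsilon}\ge 0$, so by the definition \eqref{eq:mugamma} the multiplier $\mu^{\gamma}\le 0$ pointwise. Moreover, since $Su^{\gamma}+y_f \in C(\Omega;H^1_0(D)\cap H^2(D)) \hookrightarrow C(\overline{\Xi})$ and $\psi\in C(\overline{\Xi})$, the composition $\mu^{\gamma}=\gamma g'_{\varepsilon_\gamma}(s)g_{\varepsilon_\gamma}(s)$ with $s = Su^\gamma + y_f - \psi$ is a continuous function on $\overline\Xi$, hence an element of $L^1_\pi(\Xi)$.

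Next, I would test the optimality condition \eqref{eq:kkt1} against $v-u^{\gamma_k}$, where $v$ is the Slater point from Assumption~\ref{ass:slater}. Since $\eta^{\gamma_k}\in\mathcal{N}_{\mathcal{U}_{ad}}(u^{\gamma_k})$ satisfies $\langle\eta^{\gamma_k}, v-u^{\gamma_k}\rangle \le 0$, rearrangement gives
\begin{equation*}
\int_{\Xi}\mu^{\gamma_k}(Sv - Su^{\gamma_k})\,d\pi \ge - C_1,
\end{equation*}
where $C_1$ bounds the remaining terms uniformly in $k$ (using boundedness of $\mathcal{U}_{ad}$ in $L^2(D)$ and complete continuity of $S$). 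Splitting $Sv - Su^{\gamma_k} = (Sv+y_f-\psi) - (Su^{\gamma_k}+y_f-\psi)$ and using the Slater lower bound $Sv+y_f-\psi\ge \delta$ together with $\mu^{\gamma_k}\le 0$ yields
\begin{equation*}
\delta \int_{\Xi}\mu^{\gamma_k}\,d\pi - \int_{\Xi}\mu^{\gamma_k}(Su^{\gamma_k}+y_f-\psi)\,d\pi \ge -C_1.
\end{equation*}

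The remaining step is to bound the second integral from below. Writing $s = Su^{\gamma_k}+y_f-\psi$, the integrand equals $\gamma_k g'_{\varepsilon_{\gamma_k}}(s)g_{\varepsilon_{\gamma_k}}(s)\,s$; it is nonnegative when $s<0$ (all three factors of opposite signs combine), and when $s\ge 0$ it is bounded below by $-\gamma_k \varepsilon_{\gamma_k}^2 e^{-1}$ pointwise thanks to $|g'_{\varepsilon}|\le 1$ and the last bound in Lemma~\ref{lem:gprime}. With the scaling $\varepsilon_{\gamma_k} = o(\gamma_k^{-1/2})$, the product $\gamma_k\varepsilon_{\gamma_k}^2\to 0$, so this term is uniformly bounded. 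Therefore
\begin{equation*}
\delta\int_{\Xi}\mu^{\gamma_k}\,d\pi \ge -C_2 \qquad\Longrightarrow\qquad \|\mu^{\gamma_k}\|_{L^1_\pi(\Xi)} = -\int_{\Xi}\mu^{\gamma_k}\,d\pi \le C_2/\delta.
\end{equation*}

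Finally, each $\mu^{\gamma_k}$ induces a finite signed Radon measure on $\overline{\Xi}$ of total variation at most $C_2/\delta$. Since $C(\overline{\Xi})$ is separable, the sequential Banach--Alaoglu theorem provides a subsequence (relabeled) and some $\rho^*\in\mathcal{M}(\overline{\Xi})$ with $\mu^{\gamma_k}\rightharpoonup^*\rho^*$, which is precisely \eqref{eq:muconv}. The main obstacle is the uniform $L^1$ bound: it relies on the delicate combination of the Slater condition (controlling the sign), the optimality condition (producing the variational inequality), and the scaling $\varepsilon_\gamma = o(\gamma^{-1/2})$ (controlling the spurious contribution on $\{s\ge 0\}$). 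Relaxing the scaling of $\varepsilon_\gamma$ would break this bound.
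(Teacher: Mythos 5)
Your proof is correct, and its overall skeleton --- a uniform $L^1_\pi(\Xi)$ bound obtained from the Slater condition, the first-order optimality condition, and the scaling of $\varepsilon_\gamma$, followed by weak* sequential compactness of bounded sets in $\mathcal{M}(\overline{\Xi})=C(\overline{\Xi})^*$ --- matches the paper's. The execution of the uniform bound, however, is genuinely different. The paper introduces the penalty functional $\beta_\varepsilon(y)=\mathbb{E}_{\mathbb{P}}[\tfrac12\|g_\varepsilon(y+y_f-\psi)\|^2_{L^2(D)}]$, uses its convexity to get $(\mu^\gamma, y-y^\gamma)\le\gamma\varepsilon^2\tfrac{|D|}{2}\log^2 2$ for every feasible $y$, tests this against $Su^\dagger+v$ for all $v$ in an $L^\infty_\pi$ ball of radius $\delta$, and recovers $\|\mu^\gamma\|_{L^1_\pi}$ via the dual (Fenchel--Moreau) characterization $\|\mu^\gamma\|_{L^1_\pi}=\tfrac1\delta\sup_{v\in\mathbb{B}_\delta(0)}(\mu^\gamma,v)$; the cross term $(\mu^\gamma,\mathbf{A}^{-1}\mathbf{B}(u^\gamma-u^\dagger))$ is then bounded in Appendix~A exactly as you do, by pairing the optimality condition with $u^\dagger-u^\gamma$ and discarding the normal-cone term. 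You instead exploit the explicit sign structure of the softplus multiplier: $\mu^\gamma\le 0$ pointwise reduces the $L^1$ norm to $-\int_\Xi\mu^\gamma\,d\pi$, the Slater margin enters through the elementary inequality $\mu^\gamma(Sv+y_f-\psi)\le\delta\mu^\gamma$, and the remaining term $(\mu^\gamma, Su^\gamma+y_f-\psi)$ is controlled by the pointwise bound $s\,g_\varepsilon(s)\le\varepsilon^2\mathrm{e}^{-1}$ from Lemma~\ref{lem:gprime} together with the sign of the integrand on $\{s<0\}$, rather than by convexity of $\beta_\varepsilon$. Your route is more elementary (no convex-duality step, no sup over an $L^\infty$ ball) but is tied to the penalty having a signed multiplier; the paper's convexity argument would survive a smoothing whose derivative changes sign. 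One small overstatement at the end: for this lemma $\varepsilon_\gamma=\mathcal{O}(\gamma^{-1/2})$ (i.e.\ $\gamma\varepsilon_\gamma^2$ merely bounded) already suffices, as in the paper; the strict $o(\gamma^{-1/2})$ is only needed later for the complementarity relation \eqref{eq:rho*constr}.
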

\begin{proof}
Recall that $\mu^{\gamma}$ is defined for any $u^{\gamma}$ by \eqref{eq:mugamma}.
We start with proving the existence of a $c_0 > 0$ such that $|(\mu^{\gamma}, v)| \le c_0$ for any $v \in \mathbb{B}_{\delta}(0) \subset L^{\infty}_{\pi}(\Xi)$ and some fixed $\delta>0$,
where $(\cdot, \cdot)$ is the inner product on $L^2_{\pi}(\Xi)$.
Let $\beta_{\varepsilon}(y):=\mathbb{E}_{\mathbb{P}}\left[\frac{1}{2}\|g_{\varepsilon}(y + y_f - \psi)\|_{L^2(D)}^2\right]$ for any $y\in L^2_{\pi}(\Xi)$.
Since $g_{\varepsilon}$ is convex and continuously differentiable, so is $\beta_{\varepsilon}$, and $\mu^{\gamma} = \gamma \nabla \beta_\varepsilon(y^{\gamma})$.
For any $y\in L^2_{\pi}(\Xi)$ such that $y + y_f \ge \psi$ $\pi$-a.s.,
using convexity of $\beta$, we obtain 
\begin{equation}\label{eq:mu*y}
\gamma \varepsilon^2 \frac{|D|}{2} \log^2 2 \ge \gamma \beta(y) \ge \gamma \beta(y^{\gamma}) + (\mu^{\gamma}, y - y^{\gamma}) \ge (\mu^{\gamma}, y - y^{\gamma}).
\end{equation}
By the Slater condition, there exists $\delta>0$ and $u^{\dagger} \in \mathcal{U}_{ad}$ such that for all $v \in \mathbb{B}_{\delta}(0) \subset L^{\infty}_{\pi}(\Xi)$, $S u^{\dagger} + y_f - \psi + v\ge 0$ holds.
Since $(\Omega, \mathcal{F}, \mathbb{P})$ is a complete probability space, $D$ is bounded, and the Lebesgue spaces are nested, it holds that $v \in L^2_{\pi}(\Xi)$ and $Su^{\dagger} + y_f \in L^2_{\pi}(\Xi)$.
Fixing an arbitrary $v \in \mathbb{B}_{\delta}(0) \subset L^\infty_\pi(\Xi)$, we have
\begin{align}
(\mu^{\gamma}, v) & = (\mu^{\gamma}, S u^{\dagger} + v + y_f - Su^{\gamma} - y_f) + (\mu^{\gamma}, Su^{\gamma} - Su^{\dagger}) \nonumber \\
& \le \gamma \varepsilon^2 \frac{|D|}{2} \log^2 2 + (\mu^{\gamma}, Su^{\gamma} - Su^{\dagger}) & \text{(by \eqref{eq:mu*y})} \nonumber \\
& = \gamma \varepsilon^2 \frac{|D|}{2} \log^2 2 + (\mu^{\gamma}, \mathbf{A}^{-1} \mathbf{B} (u^{\gamma} - u^{\dagger})) 
=: \textrm{I} + \textrm{II}
\label{eq:muv}
\end{align}
The term \textrm{I} can be uniformly bounded in $\gamma$ by letting $\varepsilon = \mathcal{O}(\gamma^{-1/2})$. The term \textrm{II}
can again be bounded exactly as in \cite[Lemma~5]{Surowiec-risk-2022}.
This gives $(\mu^{\gamma}, v) \le c_0 < \infty$ for an arbitrary $v \in \mathbb{B}_{\delta}(0)$ as needed. Subsequently, as in \cite[Lemma~5]{Surowiec-risk-2022}, we obtain that $\| \mu^\gamma \|_{L^1_\pi(\Xi)} \le \frac{1}{\delta}c_0 < \infty$ which implies that the sequence $\mu^{\gamma}$ is bounded in $L^1_{\pi}(\Xi)$, so we can extract a subsequence $\mu^{\gamma_k}$ which is weak* convergent to some regular countably additive Borel measure $\rho \in \mathcal{M}(\overline{\Xi})$ \cite[Cor.~2.4.3]{HAttouch_GButtazzo_GMichaille_2006}. Precise details are provided in Appendix~\ref{app:measure_proof}.
\end{proof}
\begin{lemma}
Under the assumptions of Thm.~\ref{thm:conv}, there hold \eqref{eq:lambconv} and \eqref{eq:etaconv} (the convergence of adjoint variables),
and the limit points satisfy \eqref{eq:lambd*} and \eqref{eq:eta*}.
\end{lemma}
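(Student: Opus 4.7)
The plan is to work directly from the KKT condition \eqref{eq:kkt1} and the adjoint equation. From \eqref{eq:kkt1}, I would first identify
$$
\eta^{\gamma_k} := -\mathbb{E}_{\mathbb{P}}\!\left[\mathbf{B}^* \lambda^{\gamma_k}\right] - \alpha u^{\gamma_k} \in \mathcal{N}_{\mathcal{U}_{ad}}(u^{\gamma_k}),
$$
so that the adjoint equation lets me rewrite the relevant quantity as
$$
\mathbb{E}_{\mathbb{P}}\!\left[\mathbf{B}^* \lambda^{\gamma_k}\right] = \mathbb{E}_{\mathbb{P}}\!\left[\mathbf{B}^* \mathbf{A}^{-*} T^* (T y^{\gamma_k} - y_d)\right] + \mathbb{E}_{\mathbb{P}}\!\left[\mathbf{B}^* \mathbf{A}^{-*} \mu^{\gamma_k}\right].
$$
The first term is manifestly bounded in $L^2(D)$, and even strongly converges, since $y^{\gamma_k} \to y^*$ in $C(\Omega;H^1_0(D)\cap H^2(D))$ by the previous lemma and the operators are uniformly bounded. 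The second term is the delicate one.

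Next I would test the second term against an arbitrary $\varphi \in L^2(D)$ and rewrite the result as the duality pairing
$$
\left(\mathbb{E}_{\mathbb{P}}\!\left[\mathbf{B}^* \mathbf{A}^{-*} \mu^{\gamma_k}\right],\varphi\right)_{L^2(D)} = \langle \mathbf{A}^{-1}\mathbf{B}\,\varphi, \mu^{\gamma_k}\rangle = \int_{\Xi} (\mathbf{A}^{-1}(\omega)\mathbf{B}(\omega)\varphi)(x)\, d\mu^{\gamma_k}(x,\omega).
$$
Here Assumption~\ref{as:par-reg} is essential: it guarantees that $S\varphi = \mathbf{A}^{-1}\mathbf{B}\varphi \in C(\Omega; H^1_0(D)\cap H^2(D))$, which embeds into $C(\overline{\Xi})$ since $\Omega$ is compact Polish and $H^2(D) \hookrightarrow C(\overline{D})$. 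Therefore $\mathbf{A}^{-1}\mathbf{B}\varphi$ is an admissible test element for the weak* convergence $\mu^{\gamma_k} \rightharpoonup^* \rho^*$ in $\mathcal{M}(\overline{\Xi})$ established in the previous lemma, and the pairing converges to $\int_{\Xi} \mathbf{A}^{-1}(\omega)\mathbf{B}(\omega)\varphi\, d\rho^*(x,\omega)$. Uniform boundedness in $L^2(D)$ of the whole sequence $\mathbb{E}_{\mathbb{P}}[\mathbf{B}^* \lambda^{\gamma_k}]$ follows from this calculation as well, since the pairing is bounded by $\|\mathbf{A}^{-1}\mathbf{B}\varphi\|_{C(\overline{\Xi})}\|\mu^{\gamma_k}\|_{\mathcal{M}(\overline\Xi)} \le C\|\varphi\|_{L^2(D)}$ uniformly in $k$.

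With a weakly convergent subsequence $\mathbb{E}_{\mathbb{P}}[\mathbf{B}^*\lambda^{\gamma_k}] \rightharpoonup \Lambda^*$ in hand, taking $\varphi$ arbitrary yields the characterisation \eqref{eq:lambd*}. Passing to the limit in the identity defining $\eta^{\gamma_k}$ and using that $u^{\gamma_k} \to u^*$ strongly in $L^2(D)$ from the earlier lemma gives $\eta^{\gamma_k} \rightharpoonup \eta^* := -\Lambda^* - \alpha u^*$ weakly in $L^2(D)$, which is precisely the first identity of \eqref{eq:eta*}.

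Finally, to show $\eta^* \in \mathcal{N}_{\mathcal{U}_{ad}}(u^*)$ I would pass to the limit in the variational inequality $(\eta^{\gamma_k}, v - u^{\gamma_k}) \le 0$ for all $v \in \mathcal{U}_{ad}$. Writing it as $(\eta^{\gamma_k},v) - (\eta^{\gamma_k},u^{\gamma_k})$, the first term converges by weak convergence of $\eta^{\gamma_k}$; the second is a classical weak-times-strong pairing that converges to $(\eta^*, u^*)$, giving $(\eta^*, v-u^*) \le 0$ for all $v \in \mathcal{U}_{ad}$, hence $\eta^* \in \mathcal{N}_{\mathcal{U}_{ad}}(u^*)$. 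The main obstacle is identifying the limit of the $\mu^{\gamma_k}$ contribution: the sequence is only bounded in $L^1_\pi(\Xi)$ and converges merely weakly* in $\mathcal{M}(\overline{\Xi})$, so every test function must be genuinely continuous on $\overline{\Xi}$, and this is exactly what Assumption~\ref{as:par-reg} is designed to provide.
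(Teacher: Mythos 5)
Your argument is correct and is essentially the one the paper relies on: the paper's own proof of this lemma simply defers to \cite[Lemmas~6,7,8]{Surowiec-risk-2022}, observing that those proofs use only the uniform $L^1_{\pi}(\Xi)$ bound and weak* convergence of $\mu^{\gamma_k}$ together with the already-established strong convergence of $u^{\gamma_k}$ and $y^{\gamma_k}$ --- exactly the ingredients you deploy. Your explicit steps (splitting $\mathbb{E}_{\mathbb{P}}[\mathbf{B}^*\lambda^{\gamma_k}]$ via the adjoint equation, testing the $\mu^{\gamma_k}$-term against $\mathbf{A}^{-1}\mathbf{B}\varphi \in C(\overline{\Xi})$ using Assumption~\ref{as:par-reg}, and the weak-times-strong limit in the normal-cone variational inequality) are the standard argument carried out in that reference, so nothing is missing.
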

\begin{proof}
The proof is identical to \cite[Lemmas~6,7,8]{Surowiec-risk-2022}, since those involve only the adjoint variables $\lambda^{\gamma},\eta^{\gamma}$ and the uniform boundedness and weak* convergence of the multiplier $\mu^{\gamma}$ in $L^1_{\pi}(\Xi)$ (established above), and hence are agnostic to the smoothing and its parameter $\varepsilon$, as long as it satisfies the previous lemmas.
\end{proof}
\begin{lemma}
Under the assumptions of Thm.~\ref{thm:conv}, \eqref{eq:rho*neg} and \eqref{eq:rho*constr} (limit measure constraints) hold.
\end{lemma}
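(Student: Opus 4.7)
The plan is to split the lemma into the two assertions and handle them using the pointwise sign of the Moreau–Yosida multiplier and a quantitative decay of the penalty term.

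\textbf{Nonnegativity \eqref{eq:rho*neg}.} I would start from the explicit representation \eqref{eq:mugamma}. By Lemma~\ref{lem:gprime}, $g'_{\varepsilon}(s)<0$ and $g_{\varepsilon}(s)\ge 0$ for every $s\in\mathbb{R}$, so $\mu^{\gamma_k}\le 0$ $\pi$-almost everywhere. Thus for any nonnegative $\phi\in C(\overline{\Xi})$ we have $\langle \phi,\mu^{\gamma_k}\rangle = \int \phi\, \mu^{\gamma_k}\,d\pi \le 0$, and the weak$^*$ convergence \eqref{eq:muconv} transfers this inequality to $\langle \phi,\rho^*\rangle \le 0$.

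\textbf{Complementarity \eqref{eq:rho*constr}.} Write $s^{\gamma_k}:=S u^{\gamma_k}+y_f-\psi$ and $s^*:=y^*+y_f-\psi$. By Assumption~\ref{as:par-reg} and the strong convergence of $u^{\gamma_k}$ (already proved), $s^{\gamma_k}\to s^*$ in $C(\overline{\Xi})$, while $\mu^{\gamma_k}$ is uniformly bounded in $L^1_\pi(\Xi)$, so combining uniform convergence of the test function with weak$^*$ convergence of the measure yields
\[
\langle s^{\gamma_k},\mu^{\gamma_k}\rangle \longrightarrow \langle s^*,\rho^*\rangle = -\langle \psi-(y^*+y_f),\rho^*\rangle.
\]
It therefore suffices to show that $\langle s^{\gamma_k},\mu^{\gamma_k}\rangle\to 0$.

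\textbf{Decay of $\langle s^{\gamma_k},\mu^{\gamma_k}\rangle$ (the main obstacle).} I would split the integration set $\Xi$ into $\{s^{\gamma_k}\ge 0\}$ and $\{s^{\gamma_k}<0\}$.
On $\{s^{\gamma_k}\ge 0\}$, Lemma~\ref{lem:gprime} gives $0\le s\, g_{\varepsilon}(s)\le \varepsilon^2 \mathrm{e}^{-1}$, so with $|g'_\varepsilon|\le 1$ the contribution is bounded by $\gamma_k \varepsilon_{\gamma_k}^2 \mathrm{e}^{-1}|\Xi|$, which tends to $0$ because $\varepsilon_{\gamma_k}=o(\gamma_k^{-1/2})$.
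On $\{s^{\gamma_k}<0\}$, I would use the elementary bound $g_{\varepsilon}(s)\le -s+\varepsilon\log 2$ (from $\log(1+\mathrm{e}^{-s/\varepsilon})=-s/\varepsilon+\log(1+\mathrm{e}^{s/\varepsilon})\le -s/\varepsilon+\log 2$) together with $|g'_\varepsilon|\le 1$ to obtain
\[
\Bigl|\gamma_k\!\int_{\{s^{\gamma_k}<0\}} s^{\gamma_k} g'_{\varepsilon_{\gamma_k}}(s^{\gamma_k}) g_{\varepsilon_{\gamma_k}}(s^{\gamma_k})\,d\pi\Bigr|
\le \gamma_k\|(-s^{\gamma_k})_+\|^2_{L^2_\pi} + \gamma_k \varepsilon_{\gamma_k}\log 2\,\|(-s^{\gamma_k})_+\|_{L^1_\pi}.
\]
The key quantitative ingredient is that a sharpening of the optimality inequality used in the strong-convergence lemma actually yields $\gamma_k\|g_{\varepsilon_{\gamma_k}}(s^{\gamma_k})\|^2_{L^2_\pi}\to 0$: indeed, sandwiching the cost at $u^{\gamma_k}$ between the upper bound from $v^k(t_k)$ (whose nonpenalized cost converges to that at $u^*$ and whose penalty vanishes since $\gamma_k\varepsilon_{\gamma_k}^2\to 0$) and the weak lower semicontinuity bound $\liminf\mathbb{E}[\cdot_{u^{\gamma_k}}]\ge \mathbb{E}[\cdot_{u^*}]$ forces the nonnegative Moreau–Yosida term $\tfrac{\gamma_k}{2}\mathbb{E}[\|g_{\varepsilon_{\gamma_k}}(s^{\gamma_k})\|^2]$ to $0$. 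Since $g_\varepsilon(s)\ge (-s)_+$, this also gives $\gamma_k\|(-s^{\gamma_k})_+\|^2_{L^2_\pi}\to 0$, and then the Cauchy–Schwarz bound $\|(-s^{\gamma_k})_+\|_{L^1_\pi}\le |\Xi|^{1/2}\|(-s^{\gamma_k})_+\|_{L^2_\pi}$ combined with $\sqrt{\gamma_k}\,\varepsilon_{\gamma_k}\to 0$ kills the second term. Hence $\langle s^{\gamma_k},\mu^{\gamma_k}\rangle\to 0$, establishing $\langle \psi-(y^*+y_f),\rho^*\rangle=0$. The delicate step is extracting $\gamma_k\|g_{\varepsilon_{\gamma_k}}(s^{\gamma_k})\|^2_{L^2_\pi}\to 0$ rather than mere boundedness; everything else is a careful bookkeeping of the rates $\varepsilon_{\gamma_k}=o(\gamma_k^{-1/2})$ on the two pieces of the domain split.
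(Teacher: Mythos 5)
Your proof of \eqref{eq:rho*neg} is exactly the paper's argument, so nothing to add there. For the complementarity relation \eqref{eq:rho*constr} your route is correct but genuinely different from the paper's. The paper argues one-sidedly: since $\rho^*$ is nonpositive (from the first part) and the limit test function $\phi^*=\psi-(Su^*+y_f)$ is nonpositive (from feasibility of the limit), the inequality $\langle\phi^*,\rho^*\rangle\ge 0$ comes for free; for the reverse inequality it bounds the integrand $\phi_k\,\mu^{\gamma_k}$ from above pointwise, noting that on the infeasible set $\{\phi_k>0\}$ the product is negative and can simply be discarded, while on $\{\phi_k\le 0\}$ the product bound $s\,g_{\varepsilon}(s)\le\varepsilon^2\mathrm{e}^{-1}$ of Lemma~\ref{lem:gprime} gives $\langle\phi_k,\mu^{\gamma_k}\rangle\le C\,\gamma_k\varepsilon_{\gamma_k}^2\to 0$. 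You instead prove the two-sided statement $\langle s^{\gamma_k},\mu^{\gamma_k}\rangle\to 0$, which forces you to control the infeasible set quantitatively rather than by sign alone. The extra ingredient you supply for that --- retaining the nonnegative penalty term on the left of the optimality inequality in the strong-convergence lemma and playing it against weak lower semicontinuity to conclude $\gamma_k\,\mathbb{E}_{\mathbb{P}}\left[\|g_{\varepsilon_{\gamma_k}}(s^{\gamma_k})\|^2_{L^2(D)}\right]\to 0$, not merely boundedness --- is a correct and standard sharpening, and the remaining bookkeeping ($g_{\varepsilon}(s)\le -s+\varepsilon\log 2$ for $s\le 0$, $|g'_{\varepsilon}|\le 1$, Cauchy--Schwarz, and $\sqrt{\gamma_k}\,\varepsilon_{\gamma_k}\to 0$) checks out. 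Your argument is somewhat longer but yields slightly more: the vanishing of the duality pairing along the whole sequence and of the penalty term itself. The paper's version is more economical precisely because it never needs the penalty term to vanish, only to stay bounded; the sign information on $\rho^*$ and $\phi^*$ does the rest.
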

\begin{proof}
By Lemma~\ref{lem:gprime},
$$
\mu^{\gamma} = \gamma g_{\varepsilon_{\gamma}}'(S u^{\gamma} + y_f - \psi) g_{\varepsilon_{\gamma}}(S u^{\gamma} + y_f - \psi) < 0 \qquad \pi\text{-a.s.}
$$
Therefore, for any nonnegative test function $\phi \in C(\overline{\Xi})$,
$$
\langle \phi, \mu^{\gamma} \rangle = \int_{\Omega} \int_D \phi \mu^{\gamma} d\pi \le 0.
$$
Since there exists a sequence $\mu^{\gamma_k} \rightharpoonup^* \rho^* \in \mathcal{M}(\overline\Xi)$, we have $\langle \phi, \mu^{\gamma_k} \rangle \rightarrow \langle \phi, \rho^* \rangle$ as $k \rightarrow \infty$, so $\rho^*$ is a negatively-signed measure \eqref{eq:rho*neg}.
Let $\phi_k = \psi - (S u^{\gamma_k} + y_f)$, which is continuous and converges strongly in $C(\overline{\Xi})$ to $\phi^* = \psi - (S u^* + y_f) \le 0$.
On one hand, since both $\rho^*$ and $\phi^*$ are negatively-signed, $\langle \phi^*, \rho^* \rangle \ge 0$.
On the other hand,
\begin{align*}
\langle \phi_k, \mu^{\gamma_k} \rangle & \le \max_{x,\omega: \phi_k(x,\omega) \le 0} \left[\phi_k \mu^{\gamma_k}\right] \langle 1,1 \rangle
  \le \gamma_k \max_{s \ge 0} \left[-s g_{\varepsilon_{\gamma_k}}'(s) g_{\varepsilon_{\gamma_k}}(s)\right] |D| \\
 & \le  -g_{\varepsilon_{\gamma_k}}'(0) |D| \gamma_k \max_{s \ge 0} \left[s g_{\varepsilon_{\gamma_k}}(s)\right]
 = |D| \frac{\gamma_k}{2} \max_{s \ge 0} \left[s g_{\varepsilon_{\gamma_k}}(s)\right]
 \le \gamma_k \varepsilon_{\gamma_k}^2 \frac{|D|}{2 \mathrm{e}}
\end{align*}
by Lemma~\ref{lem:gprime}, and $\langle \phi_k, \mu^{\gamma_k} \rangle \rightarrow \langle \phi^*, \rho^* \rangle$, so it holds $\langle \phi^*, \rho^* \rangle \le 0$ as long as $\varepsilon_{\gamma_k} = o(\gamma_k^{-1/2})$.
This means \eqref{eq:rho*constr}.
\end{proof}
This completes the proof of Theorem~\ref{thm:conv}.

\section{Practical Gauss-Newton method for smoothed Moreau-Yosida problem}
\label{sec:prac}
\subsection{Random field parametrization}
For practical computations, it is convenient to parametrize all random fields with independent identically distributed (i.i.d.) random variables with a known probability density function.
Those variables can then be sampled independently, and an expectation can be computed simply by quadrature.
Therefore, we will use the following assumption.
\begin{assumption}[finite noise]\label{as:rv}
There exists a $d$-dimensional random vector $\xi(\omega) \in \mathbb{R}^d$ with a product probability density function $\pi(\xi) = \pi(\xi_1) \cdots \pi(\xi_d),$ 
 such that any random field $X(\omega) \in L^2_{\mathbb{P}}(\Omega; V)$ can be expressed as a function $x(\xi) \in L^2_{\pi}(\mathbb{R}^d; V)$ such that $X(\omega) = x(\xi(\omega))$ a.s., and
 $$
 \mathbb{E}_{\mathbb{P}}[X] = \int_{\mathbb{R}^d} x(\xi) \pi(\xi) d\xi.
 $$
\end{assumption}
\begin{example}\label{ex:KLE}
Consider an elliptic PDE
$$
-\nabla \cdot (\kappa(x; \xi(\omega)) \nabla y) = u,
$$
with random diffusivity coefficient
$$
\kappa(x; \xi(\omega)) = \kappa_0(x) + \sum_{k=1}^{d} \psi_k(x) \xi_k(\omega)
$$
given by a Karhunen-Loeve expansion (see e.g., \cite{LordPowerShardlow}), where $\xi_k$ are independent random variables.
\end{example}

\subsection{Discretization and Derivatives of the Cost}
Theorem~\ref{thm:conv} was proven in the linear-quadratic case for simplicity, but the computational scheme below can be applied to more general equality constraints and cost functions.
Thus, we consider a more general version of \eqref{eq:problemmy}:
\begin{align}
\min_{u^{\gamma} \in \mathcal{U}_{ad}}~ & j(u^{\gamma}) + \mathbb{E}_{\mathbb{P}}\left[\frac{\gamma}{2}\|g_{\varepsilon_{\gamma}}(S(u^{\gamma}) + y_f - \psi)\|_{L^2(D)}^2 \right],
 \label{eq:Jmy}
\end{align}
where $j: \mathcal{U}_{ad} \rightarrow \mathbb{R}$, is bounded below, is once continuously
differentiable for first-order methods, and twice continuously differentiable for second-order methods.
In addition, we allow $S: \mathcal{U}_{ad} \rightarrow \mathcal{Y}$ to be nonlinear, but require it to 
be at least continuously differentiable. This will be sufficient for the Gauss-Newton Hessian below.
However, the exact Hessian requires $S^*$ to be twice continuously differentiable. We assume that
$\mathcal{Y}$ and $\mathcal{U}$ are appropriate Banach spaces and $\mathcal{U}_{ad} \subset \mathcal{U}$.

Since $S$ involves usually the solution of a differential equation, it needs to be discretized (using e.g. Finite Element methods and/or time integration schemes).
For a given mesh parameter $h>0$, we introduce the discretized (possibly nonlinear) operator $\mathbf{S}_h(\xi): \mathcal{U}_{ad} \rightarrow \mathbb{R}^{n_y}$,
where $n_y$ is the number of degrees of freedom in the discrete state variable.
Let $\mathbf{y}(\xi(\omega)) \in L^2_{\pi}(\mathbb{R}^d; \mathbb{R}^{n_y})$ be a random vector discretizing the random field $Y(\omega) \in L^2_{\mathbb{P}}(\Omega; V)$.
Its norm can be written as an expectation of a vector quadratic form,
$$
\|\mathbf{y}\|^2 := \mathbb{E}_{\mathbb{P}}\left[\|\mathbf{y}(\xi)\|_{\mathbf{M}}^2\right], \qquad  \|\mathbf{y}(\xi)\|_{\mathbf{M}}^2:= \mathbf{y}(\xi)^\top \mathbf{M} \mathbf{y}(\xi),
$$
where $\mathbf{M}=\mathbf{M}^\top>0 \in \mathbb{R}^{n_y \times n_y}$ is a mass matrix.
Similarly, we discretize $y_f(\omega) \mapsto \mathbf{y}_f(\xi) \in L^2_{\pi}(\mathbb{R}^d; \mathbb{R}^{n_y})$.
The discretized cost is denoted by $j_h(u)\approx j(u)$,
and the discretized constraint is $\boldsymbol\psi_h(\xi) \in L^2_{\pi}(\mathbb{R}^d; \mathbb{R}^{n_y})$.
Now, the semi-discretized Moreau-Yosida cost function \eqref{eq:Jmy} becomes
\begin{equation}
 j_{\gamma,h}(u):=j_h(u) + \frac{\gamma}{2} \mathbb{E}_{\mathbb{P}}\left[\|g_{\varepsilon_{\gamma}}(\mathbf{S}_h(\xi) u + \mathbf{y}_f(\xi) - \boldsymbol\psi_h(\xi))\|_{\mathbf{M}}^2\right].
 \label{eq:Jmyh}
\end{equation}
To derive optimization methods, we compute its gradient and Hessian:
\begin{align}
 \nabla_u j_{\gamma,h}(u) & = \nabla_u j_h + \gamma  \mathbb{E}_{\mathbb{P}}\left[\mathbf{S}_h^* \cdot \mathrm{diag}(g'_{\varepsilon_{\gamma}}(\mathbf{S}_h u + \mathbf{y}_f - \boldsymbol\psi_h)) \cdot \mathbf{M}  g_{\varepsilon_{\gamma}}(\mathbf{S}_h u + \mathbf{y}_f - \boldsymbol\psi_h)\right], \label{eq:gradJ} \\
 \nabla_{uu}^2 j_{\gamma,h}(u) & =  \nabla^2_{uu} j_h + \gamma \mathbb{E}_{\mathbb{P}}\left[\mathbf{S}_h^* \cdot \mathrm{diag}(g'_{\varepsilon_{\gamma}}(\mathbf{r})) \mathbf{M} \mathrm{diag}(g'_{\varepsilon_{\gamma}}(\mathbf{r})) \cdot  \mathbf{S}_h' \right] \label{eq:nablay_nablay}\\
 & + \gamma \mathbb{E}_{\mathbb{P}}\left[\mathbf{S}_h^* \cdot (\mathrm{tendiag}(g''_{\varepsilon_{\gamma}}(\mathbf{r})) \times_3 (\mathbf{M} g_{\varepsilon_{\gamma}}(\mathbf{r}))) \cdot  \mathbf{S}_h' \right] \label{eq:nabla2_G} \\
 & + \gamma \mathbb{E}_{\mathbb{P}}\left[\nabla_u \mathbf{S}_h^* \times_3 (\mathrm{diag}(g'_{\varepsilon_{\gamma}}(\mathbf{r}) \cdot \mathbf{M}  g_{\varepsilon_{\gamma}}(\mathbf{r})) \right], \label{eq:nabla2_y}
\end{align}
where we denoted $\mathbf{r}:=\mathbf{S}_h u + \mathbf{y}_f - \boldsymbol\psi_h$ for brevity, $\mathrm{tendiag}(\cdot)$ is producing a $3$-dimensional tensor out of vector by putting the vector elements along the diagonal, and zero elements otherwise, and $\times_3$ is the tensor-vector contraction product over the $3$d mode of the tensor.
If $\mathbf{S}_h$ is a nonlinear operator, $\mathbf{S}_h' = \nabla_u \mathbf{S}_h(u)$ denotes the gradient of an image of $u$, and $\mathbf{S}_h^*$ is the adjoint of $\mathbf{S}_h'$.

\subsection{Matrix-free Fixed Point Gauss-Newton Hessian}
The exact assembly of all terms of the Hessian \eqref{eq:nablay_nablay}--\eqref{eq:nabla2_y} can be too computationally expensive, since this involves dense tensor-valued random fields (such as $\nabla_u \mathbf{S}_h^*$).
To simplify the computations, we can firstly omit the terms \eqref{eq:nabla2_G} and \eqref{eq:nabla2_y} which contain order-3 tensors.
Secondly, we can replace the exact expectation by a fixed-point evaluation.
In the risk-neutral scenario, we can assume that the cost is defined via an expectation,
$j(u) = \mathbb{E}_{\mathbb{P}}[J(u; \xi)]$ for some random field $J(\cdot; \xi)$ (for example, $J(u;\xi) = \frac{1}{2}\|T S(\xi) u + y_f(\xi) - y_d(\xi)\|_{L^2(D)}^2 + \frac{\alpha}{2}\|u\|_{L^2(D)}^2$),
and after discretization,
$j_h(u) = \mathbb{E}_{\mathbb{P}}[J_h(u; \xi)]$.
Its Hessian can then be written as
$$
\nabla_{uu}^2 j_h(u) = \mathbb{E}_{\mathbb{P}}\left[\nabla_{uu}^2 J_h(u; \xi)\right].
$$
Now we can replace $\nabla_{uu}^2 j_h = \mathbb{E}_{\mathbb{P}}[\nabla_{uu}^2 J_h(u;\xi)]$ by
$$
\tilde \nabla_{uu}^2 j_h(u) := \nabla_{uu}^2 J_h(u; \mathbb{E}_{\mathbb{P}}[\xi]).
$$
This is exact if $\nabla_{uu}^2 J_h$ is linear in $\xi$, but we can take it as an approximation in the general case too.
Now to apply $\tilde \nabla_{uu}^2 j_h$ to a vector we just need to apply one deterministic $\nabla_{uu}^2 J_h(u; \mathbb{E}_{\mathbb{P}}[\xi])$, which involves solving one forward, one adjoint, and two linear sensitivity (of state and adjoint) deterministic problems in the most general setting \cite[Ch.~1, Algo.~2]{HAntil_DPKouri_MDLacasse_DRidzal_2018a}.

Similarly we approximate the second term in \eqref{eq:nablay_nablay} by
$$
\gamma \mathbf{S}_h^*(\xi_*) \mathbf{M} \mathbf{S}_h'(\xi_*),
$$
where
$$
\xi_* = \frac{\mathbb{E}_{\mathbb{P}}\left[-\xi \cdot \mathbf{1}^\top g'_{\varepsilon_{\gamma}}(\mathbf{S}_h u + \mathbf{y}_f - \boldsymbol\psi_h)\right]}{\mathbb{E}_{\mathbb{P}}\left[-\mathbf{1}^\top g'_{\varepsilon_{\gamma}}(\mathbf{S}_h u + \mathbf{y}_f - \boldsymbol\psi_h)\right]}
$$
is the mean of the random variable with respect to the probability density $\pi_{g'} \propto \pi \cdot (-\mathbf{1}^\top g'_{\varepsilon_{\gamma}}(\mathbf{S}_h u + \mathbf{y}_f - \boldsymbol\psi_h))$,
and $\mathbf{1}\in\mathbb{R}^{n_y}$ is the constant vector, averaging the spatial components.
Note that by Lemma~\ref{lem:gprime}, $-\mathbf{1}^\top g'_{\varepsilon_{\gamma}}(\mathbf{S}_h u + \mathbf{y}_f - \boldsymbol\psi_h)$ is a positive function bounded by $n_y$, so $-\pi \mathbf{1}^\top g'_{\varepsilon_{\gamma}}(\mathbf{S}_h u + \mathbf{y}_f - \boldsymbol\psi_h)$ is nonnegative and normalizable, and $\pi_{g'}$ is indeed a probability density.

Finally, we obtain a deterministic approximate Hessian
\begin{equation}\label{eq:hess-approx}
\mathbf{\tilde H} = \nabla_{uu}^2 J_h(u; \mathbb{E}_{\mathbb{P}}[\xi]) + \gamma \mathbf{S}_h^*(\xi_*) \mathbf{M} \mathbf{S}_h'(\xi_*),
\end{equation}
which can be applied to a vector by solving 2 forward, 2 adjoint, and 2 sensitivity problems. Similar construction of approximate Hessian has been considered in \cite{SGarreis_MUlbrich_2017a}.

%%%%%%%%%%%%%%%%%%%%%%%%%%%%
\subsection{Tensor-Train decomposition and practical algorithm}
\label{s:tensor}
Recall that the bottleneck is the computation of the expectation in e.g. gradient \eqref{eq:gradJ}.
While it may be possible to use a Monte Carlo quadrature, its convergence is usually slow, which may make estimates of small values of the gradient near the optimum particularly inaccurate.
Instead, we will use the Tensor-Train (TT) decomposition~\cite{osel-tt-2011}, specifically its functional extension~\cite{Marzouk-stt-2016,Gorodetsky-ctt-2019},
to approximate discretised random fields in the form of multivariate functions of random parameters.
The TT approximation is a known methodology nowadays, but for the sake of completeness it is recalled in Appendix~\ref{app:tt}.
Specifically, to compute the gradient of the cost function \eqref{eq:gradJ},
we need to approximate the function under the expectation,
\begin{equation}\label{eq:gradXi}
 \mathbf{G}^{\varepsilon,h}_{u}(\xi) := \mathbf{S}_h(\xi)^* \cdot \mathrm{diag}(g'_{\varepsilon}(\mathbf{S}_h(\xi) u + \mathbf{y}_f - \boldsymbol\psi_h)) \cdot \mathbf{M}  g_{\varepsilon}(\mathbf{S}_h(\xi) u + \mathbf{y}_f - \boldsymbol\psi_h).
\end{equation}
This function can be approximated in the TT format directly from a moderate number of evaluations at certain points $\xi$,
or, in some cases, it can be accelerated by a two-stage approach: approximating the state $\mathbf{y}(\xi)$ first, followed by plugging in the fast TT interpolation of the approximate $\mathbf{y}(\xi)$ into \eqref{eq:gradXi}.
More details on this are described in Appendix~\ref{app:prac}.
The overall pseudocode of the smoothed Moreau-Yosida optimization is listed in Algorithm~\ref{alg:as}.

\begin{algorithm}[htb]
\caption{Inexact projected Newton optimization with smoothed a.s. constraints}
\label{alg:as}
\begin{algorithmic}[1]
\Require Procedures to compute $\mathbf{S}_h u, j_h(u),\nabla_{u} j_h(u)$, $\mathbf{y}_f$, constraint $\boldsymbol\psi_h$, initial and maximal Moreau-Yosida parameters $\gamma_0,\gamma_*,$ initial smoothing parameter $\varepsilon_0$, initial control $u_0$, approximation and stopping tolerance $\mathrm{tol}$, maximal number of iterations $L$, Armijo tuning parameter $\theta\in(0,1)$, minimal step size $\delta_{\min} \in (0,1)$.
\Ensure Optimized control $u^{\gamma_*}$.
\State Set iteration number $\ell=0$, step size $\delta=1$, $u_{-1}=u_0$.
\While{$\ell<L$ \textbf{and} $\delta>\delta_{\min}$ \textbf{and} $\|u_{\ell} - u_{\ell-1}\|_{L^2(D)} > \mathrm{tol}\cdot \|u_{\ell}\|_{L^2(D)}$ \textbf{or} $\ell=0$ \textbf{or} $\gamma_{\ell}<\gamma_*$}
 \State Set $\varepsilon_{\gamma_{\ell}} = \varepsilon_0 / \sqrt{\gamma_{\ell}}$.
 \State Approximate $\mathbf{\tilde G}^{\varepsilon_{\gamma_{\ell}},h}_{u_{\ell}}(\xi) \approx \mathbf{G}^{\varepsilon_{\gamma_{\ell}},h}_{u_{\ell}}(\xi)$ as shown in \eqref{eq:gradXi} in the TT format up to tolerance $\mathrm{tol}$.
 \State Approximate $\mathbf{\tilde g'}_{\varepsilon_{\gamma_{\ell}}}(\xi) \approx g'_{\varepsilon_{\gamma_{\ell}}}(\mathbf{S}_h(\xi) u_{\ell} + \mathbf{y}_f - \boldsymbol\psi_h)$ in the TT format up to tolerance $\mathrm{tol}$.
 \State Compute the gradient $\nabla_u j_{\gamma_{\ell},h} = \nabla_u j_h(u_{\ell}) + \gamma_{\ell}  \mathbb{E}_{\mathbb{P}}[\mathbf{\tilde G}^{\varepsilon_{\gamma_{\ell}},h}_{u_{\ell}}(\xi)]$
 \State Compute the anchor point $\xi_* = \mathbb{E}_{\mathbb{P}}[-\xi \cdot \mathbf{1}^\top \mathbf{\tilde g'}_{\varepsilon_{\gamma_{\ell}}}(\xi)] / \mathbb{E}_{\mathbb{P}}[-\mathbf{1}^\top \mathbf{\tilde g'}_{\varepsilon_{\gamma_{\ell}}}(\xi)]$.
 \State Compute the Newton direction $v = -\mathbf{\tilde H}^{-1} \nabla_u j_{\gamma_{\ell},h}$ using \eqref{eq:hess-approx}.
 \State Set step size $\delta=1$.
 \While{$j_{\gamma_{\ell},h}(\mathcal{P}_{\mathcal{U}_{ad}}(u_{\ell} + \delta v)) > j_{\gamma_{\ell},h}(u_{\ell}) + \delta \theta \left( v, \nabla_u j_{\gamma_{\ell},h} \right)_{L^2(D)}$ \textbf{and} $\delta>\delta_{\min}$}
   \State Set $\delta = \delta/2$.
 \EndWhile
 \State Set $u_{\ell+1} = \mathcal{P}_{\mathcal{U}_{ad}}(u_{\ell} + \delta v)$.
 \State Set $\gamma_{\ell+1} = \min\{2\gamma_{\ell}, \gamma_*\}.$
 \State Set $\ell=\ell+1$.
\EndWhile\\
\Return $u^{\gamma_*} = u_{\ell}$.
\end{algorithmic}
\end{algorithm}

%%%%%%%%%%%%%%%%%%%%%%%%%%%%%%%
\section{Numerical examples}
\label{s:nex}
%%%%%%%%%%%%%%%%%%%%%%%%%%%%%%%%

We start with $\gamma_0=1$ and double $\gamma_{\ell+1} = 2\gamma_{\ell}$ in the course of the Newton iterations until a desired value of $\gamma_*$ is reached.
The smoothing parameter is set as $\varepsilon_{\gamma_{\ell}} = 0.5/\sqrt{\gamma_{\ell}}$.
The iteration is stopped when $\gamma_L$ has reached the maximal desired value $\gamma_*$,
and the step size has become smaller than $\delta_{\min}=10^{-3}$.
We always take a zero control as the initial guess $u_0$, and $\theta=10^{-4}$.
All computations are carried out in MATLAB 2020b on a Intel Xeon E5-2640 v4 CPU, using 
TT-Toolbox (\url{https://github.com/oseledets/TT-Toolbox}).

\subsection{One-dimensional Elliptic PDE}

We consider an elliptic PDE example from \cite{Surowiec-stability-2021,Surowiec-risk-2022}.
Here, a misfit functional
$$
j(u) = \frac{1}{2}\mathbb{E}\left[\|y(x,\xi) - y_d(x)\|_{L^2(D)}^2\right] + \frac{\alpha}{2}\|u(x)\|_{L^2(D)}^2
$$
is optimized constrained by a PDE with random coefficients\footnote{Note that \cite{Surowiec-stability-2021,Surowiec-risk-2022} considered the constraint $y \ge 0$, so here we reverse the sign of $y$ to make the constraint in the form \eqref{eq:problemc}.}
\begin{align}\label{eq:1dpde}
\begin{aligned}
 \nu(\xi) \Delta y(x,\xi) & = f(x,\xi) + u(x), & (x,\xi) & \in D \times \mathbb{R}^4, \\
 \nu(\xi) & = 10^{\xi_1(\omega)-2}, &
 f(x,\xi) & = \frac{\xi_2(\omega)}{100}, \\
 y|_{x=0} & = -1-\frac{\xi_3(\omega)}{1000}, &
 y|_{x=1} & = -\frac{2+\xi_4(\omega)}{1000}
 \end{aligned}
\end{align}
where $D = (0,1)$, and $\xi(\omega) = (\xi_1(\omega), \ldots, \xi_4(\omega)) \sim \mathcal{U}(-1,1)^4$ is uniformly distributed.
We take the desired state $y_d(x) = -\sin(50 x/\pi)$ and the regularization parameter $\alpha=10^{-2}$.
Moreover, we add the constraints
$$
 -y(x,\xi) \ge \psi(x)\equiv 0 \mbox{\quad a.s., \quad and \quad}  -0.75 \le u(x) \le 0.75 \quad \mbox{a.e}.
$$
We discretize \eqref{eq:1dpde} in the spatial coordinate $x$ using linear finite elements on a uniform grid with $n_y$ interior points,
and in each random variable $\xi_k$ using $n_{\xi}$ Gauss-Legendre quadrature nodes on $(-1,1)$.
Note that we exclude the boundary points $x=0$ and $x=1$ due to the Dirichlet boundary conditions.
This spatial discretization is used for both $y$ and $u$.

Firstly, we study precomputation of the surrogate solution $\mathbf{\tilde y}(\xi)$ and adjoint operator $\mathbf{\tilde S}_h^*(\xi)$.
We fix $n_y=63$, $n_{\xi}=65$, the TT approximation tolerance $10^{-7}$ and the final Moreau-Yosida regularization parameter $\gamma_*=1000$.
The direct computation of the TT approximation of \eqref{eq:gradXi}
requires $995$ seconds of the CPU time due to the maximal TT rank of $87$.
In contrast, $\mathbf{\tilde S}_h^*$ has the maximal TT rank of $8$,
and the computation of $\mathbf{\tilde S}_h^*$ requires only $64$ seconds despite a larger $n_y \times n_y$ TT core carrying the spatial variables.
Using the surrogates $\mathbf{\tilde y}$ and $\mathbf{\tilde S}_h^*$,
the remaining computation of $\nabla_u j_{\gamma,h}$ can be completed in less than $15$ seconds.
The relative difference between the two approximations of $\nabla_u j_{\gamma,h}$ is below the TT approximation tolerance.
This shows that the surrogate forward solution can significantly speed up Algorithm~\ref{alg:as} without degrading its convergence,
so we use it in all remaining experiments in this subsection.

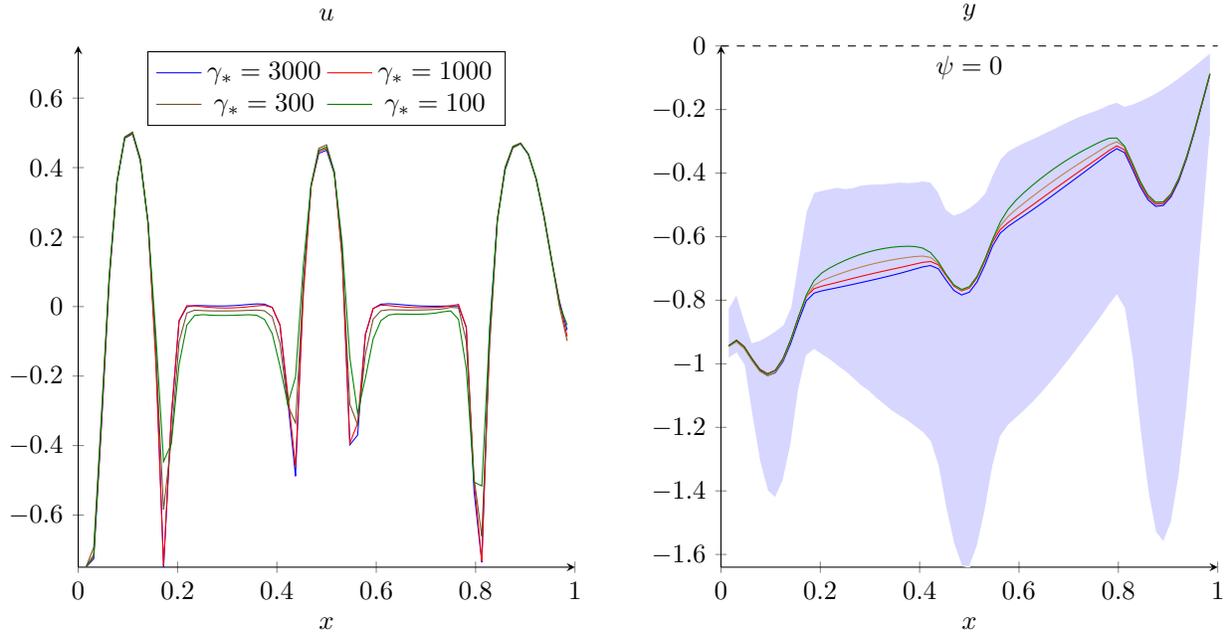
\begin{figure}[h!]
\noindent
\begin{tikzpicture}
 \begin{axis}[%
  width=0.40\linewidth,
  height=0.42\linewidth,
  xlabel=$x$,
  title=$u$,
  ymin=-0.75,ymax=0.75,
  xmin=0,xmax=1,
  legend style={at={(0.5,0.99)},anchor=north},legend columns=2,
  ]
  \addplot+[no marks] coordinates{
   (1.5625e-02,  -7.5000e-01)
   (3.1250e-02,  -7.2581e-01)
   (4.6875e-02,  -3.4279e-01)
   (6.2500e-02,   7.6452e-02)
   (7.8125e-02,   3.5760e-01)
   (9.3750e-02,   4.8468e-01)
   (1.0938e-01,   4.9702e-01)
   (1.2500e-01,   4.1953e-01)
   (1.4062e-01,   2.4272e-01)
   (1.5625e-01,  -1.2664e-01)
   (1.7188e-01,  -7.5000e-01)
   (1.8750e-01,  -3.2226e-01)
   (2.0312e-01,  -4.1730e-02)
   (2.1875e-01,  -5.7772e-04)
   (2.3438e-01,   2.9704e-03)
   (2.5000e-01,   2.7763e-03)
   (2.6562e-01,   2.0962e-03)
   (2.8125e-01,   1.6854e-03)
   (2.9688e-01,   1.8640e-03)
   (3.1250e-01,   2.7271e-03)
   (3.2812e-01,   4.2571e-03)
   (3.4375e-01,   6.2124e-03)
   (3.5938e-01,   7.8222e-03)
   (3.7500e-01,   7.1028e-03)
   (3.9062e-01,  -2.3978e-03)
   (4.0625e-01,  -5.3566e-02)
   (4.2188e-01,  -2.6936e-01)
   (4.3750e-01,  -4.8814e-01)
   (4.5312e-01,   5.9299e-02)
   (4.6875e-01,   3.4592e-01)
   (4.8438e-01,   4.4110e-01)
   (5.0000e-01,   4.5009e-01)
   (5.1562e-01,   3.8246e-01)
   (5.3125e-01,   1.6615e-01)
   (5.4688e-01,  -3.9777e-01)
   (5.6250e-01,  -3.6992e-01)
   (5.7812e-01,  -8.1033e-02)
   (5.9375e-01,  -5.8960e-03)
   (6.0938e-01,   7.0024e-03)
   (6.2500e-01,   8.3089e-03)
   (6.4062e-01,   6.8205e-03)
   (6.5625e-01,   4.8065e-03)
   (6.7188e-01,   3.0843e-03)
   (6.8750e-01,   1.8865e-03)
   (7.0312e-01,   1.2373e-03)
   (7.1875e-01,   1.0903e-03)
   (7.3438e-01,   1.3149e-03)
   (7.5000e-01,   1.9067e-03)
   (7.6562e-01,   1.6687e-03)
   (7.8125e-01,  -5.8598e-02)
   (7.9688e-01,  -5.3186e-01)
   (8.1250e-01,  -7.3532e-01)
   (8.2812e-01,  -7.0188e-02)
   (8.4375e-01,   2.5024e-01)
   (8.5938e-01,   3.9339e-01)
   (8.7500e-01,   4.5714e-01)
   (8.9062e-01,   4.6857e-01)
   (9.0625e-01,   4.3753e-01)
   (9.2188e-01,   3.6661e-01)
   (9.3750e-01,   2.5951e-01)
   (9.5312e-01,   1.3203e-01)
   (9.6875e-01,   1.5846e-02)
   (9.8438e-01,  -6.7535e-02)
  }; \addlegendentry{$\gamma_*=3000$};

  \addplot+[no marks] coordinates{
   (1.5625e-02,  -7.5000e-01)
   (3.1250e-02,  -7.1423e-01)
   (4.6875e-02,  -3.3382e-01)
   (6.2500e-02,   7.8656e-02)
   (7.8125e-02,   3.5871e-01)
   (9.3750e-02,   4.8597e-01)
   (1.0938e-01,   4.9840e-01)
   (1.2500e-01,   4.2261e-01)
   (1.4062e-01,   2.4105e-01)
   (1.5625e-01,  -1.6150e-01)
   (1.7188e-01,  -7.4453e-01)
   (1.8750e-01,  -3.1116e-01)
   (2.0312e-01,  -3.9907e-02)
   (2.1875e-01,   2.6614e-03)
   (2.3438e-01,   2.0747e-03)
   (2.5000e-01,  -5.1143e-04)
   (2.6562e-01,  -2.4386e-03)
   (2.8125e-01,  -3.6832e-03)
   (2.9688e-01,  -4.1258e-03)
   (3.1250e-01,  -3.7113e-03)
   (3.2812e-01,  -2.4161e-03)
   (3.4375e-01,  -3.2233e-04)
   (3.5938e-01,   2.1383e-03)
   (3.7500e-01,   3.6105e-03)
   (3.9062e-01,  -2.3356e-03)
   (4.0625e-01,  -5.2219e-02)
   (4.2188e-01,  -2.5075e-01)
   (4.3750e-01,  -4.5779e-01)
   (4.5312e-01,   2.9079e-02)
   (4.6875e-01,   3.4457e-01)
   (4.8438e-01,   4.4650e-01)
   (5.0000e-01,   4.5547e-01)
   (5.1562e-01,   3.8588e-01)
   (5.3125e-01,   1.3914e-01)
   (5.4688e-01,  -3.9258e-01)
   (5.6250e-01,  -3.3619e-01)
   (5.7812e-01,  -8.0133e-02)
   (5.9375e-01,  -5.2310e-03)
   (6.0938e-01,   4.1514e-03)
   (6.2500e-01,   2.9139e-03)
   (6.4062e-01,   5.4503e-04)
   (6.5625e-01,  -1.4637e-03)
   (6.7188e-01,  -2.7192e-03)
   (6.8750e-01,  -3.2091e-03)
   (7.0312e-01,  -3.0343e-03)
   (7.1875e-01,  -2.3071e-03)
   (7.3438e-01,  -1.0318e-03)
   (7.5000e-01,   2.0762e-03)
   (7.6562e-01,   6.5041e-03)
   (7.8125e-01,  -6.0967e-02)
   (7.9688e-01,  -5.0057e-01)
   (8.1250e-01,  -7.3248e-01)
   (8.2812e-01,  -1.0640e-01)
   (8.4375e-01,   2.5148e-01)
   (8.5938e-01,   3.9657e-01)
   (8.7500e-01,   4.5805e-01)
   (8.9062e-01,   4.6927e-01)
   (9.0625e-01,   4.3829e-01)
   (9.2188e-01,   3.6731e-01)
   (9.3750e-01,   2.6054e-01)
   (9.5312e-01,   1.3446e-01)
   (9.6875e-01,   1.4111e-02)
   (9.8438e-01,  -8.5872e-02)
  }; \addlegendentry{$\gamma_*=1000$};

  \addplot+[no marks] coordinates{
   (1.5625e-02,  -7.5000e-01)
   (3.1250e-02,  -6.9303e-01)
   (4.6875e-02,  -3.0900e-01)
   (6.2500e-02,   8.9941e-02)
   (7.8125e-02,   3.6208e-01)
   (9.3750e-02,   4.8830e-01)
   (1.0938e-01,   5.0239e-01)
   (1.2500e-01,   4.2726e-01)
   (1.4062e-01,   2.4044e-01)
   (1.5625e-01,  -1.2883e-01)
   (1.7188e-01,  -5.8427e-01)
   (1.8750e-01,  -3.8365e-01)
   (2.0312e-01,  -1.0109e-01)
   (2.1875e-01,  -1.8611e-02)
   (2.3438e-01,  -1.0182e-02)
   (2.5000e-01,  -1.1517e-02)
   (2.6562e-01,  -1.2294e-02)
   (2.8125e-01,  -1.2536e-02)
   (2.9688e-01,  -1.2516e-02)
   (3.1250e-01,  -1.2240e-02)
   (3.2812e-01,  -1.1644e-02)
   (3.4375e-01,  -1.0752e-02)
   (3.5938e-01,  -1.0014e-02)
   (3.7500e-01,  -1.1953e-02)
   (3.9062e-01,  -2.9442e-02)
   (4.0625e-01,  -1.0569e-01)
   (4.2188e-01,  -2.8367e-01)
   (4.3750e-01,  -3.3491e-01)
   (4.5312e-01,   6.1922e-02)
   (4.6875e-01,   3.4816e-01)
   (4.8438e-01,   4.5565e-01)
   (5.0000e-01,   4.6550e-01)
   (5.1562e-01,   3.8848e-01)
   (5.3125e-01,   1.4747e-01)
   (5.4688e-01,  -2.8096e-01)
   (5.6250e-01,  -3.3989e-01)
   (5.7812e-01,  -1.3794e-01)
   (5.9375e-01,  -3.6339e-02)
   (6.0938e-01,  -1.1998e-02)
   (6.2500e-01,  -9.0122e-03)
   (6.4062e-01,  -9.4287e-03)
   (6.5625e-01,  -9.9278e-03)
   (6.7188e-01,  -1.0059e-02)
   (6.8750e-01,  -9.8156e-03)
   (7.0312e-01,  -9.2500e-03)
   (7.1875e-01,  -8.3431e-03)
   (7.3438e-01,  -6.5481e-03)
   (7.5000e-01,  -2.1398e-03)
   (7.6562e-01,  -3.7824e-03)
   (7.8125e-01,  -9.9910e-02)
   (7.9688e-01,  -4.9343e-01)
   (8.1250e-01,  -6.6068e-01)
   (8.2812e-01,  -1.1668e-01)
   (8.4375e-01,   2.4672e-01)
   (8.5938e-01,   4.0198e-01)
   (8.7500e-01,   4.6165e-01)
   (8.9062e-01,   4.7088e-01)
   (9.0625e-01,   4.3962e-01)
   (9.2188e-01,   3.6956e-01)
   (9.3750e-01,   2.6452e-01)
   (9.5312e-01,   1.3608e-01)
   (9.6875e-01,   8.3250e-04)
   (9.8438e-01,  -9.8736e-02)
  };  \addlegendentry{$\gamma_*=300$};

  \addplot+[no marks,green!50!black] coordinates{
(1.5625e-02, -7.5000e-01)
(3.1250e-02, -7.2054e-01)
(4.6875e-02, -3.2034e-01)
(6.2500e-02, 8.5535e-02)
(7.8125e-02, 3.6069e-01)
(9.3750e-02, 4.8741e-01)
(1.0938e-01, 5.0059e-01)
(1.2500e-01, 4.2299e-01)
(1.4062e-01, 2.4113e-01)
(1.5625e-01, -7.7983e-02)
(1.7188e-01, -4.4608e-01)
(1.8750e-01, -3.9481e-01)
(2.0312e-01, -1.6650e-01)
(2.1875e-01, -5.3410e-02)
(2.3438e-01, -2.5463e-02)
(2.5000e-01, -2.3193e-02)
(2.6562e-01, -2.4587e-02)
(2.8125e-01, -2.5396e-02)
(2.9688e-01, -2.5504e-02)
(3.1250e-01, -2.5106e-02)
(3.2812e-01, -2.4236e-02)
(3.4375e-01, -2.3371e-02)
(3.5938e-01, -2.4973e-02)
(3.7500e-01, -3.6866e-02)
(3.9062e-01, -7.7838e-02)
(4.0625e-01, -1.7187e-01)
(4.2188e-01, -2.8098e-01)
(4.3750e-01, -2.0266e-01)
(4.5312e-01, 1.1783e-01)
(4.6875e-01, 3.5037e-01)
(4.8438e-01, 4.5010e-01)
(5.0000e-01, 4.5976e-01)
(5.1562e-01, 3.8550e-01)
(5.3125e-01, 1.8598e-01)
(5.4688e-01, -1.4742e-01)
(5.6250e-01, -3.0652e-01)
(5.7812e-01, -2.0617e-01)
(5.9375e-01, -9.3055e-02)
(6.0938e-01, -4.0298e-02)
(6.2500e-01, -2.4229e-02)
(6.4062e-01, -2.1368e-02)
(6.5625e-01, -2.1714e-02)
(6.7188e-01, -2.2152e-02)
(6.8750e-01, -2.1978e-02)
(7.0312e-01, -2.1000e-02)
(7.1875e-01, -1.8813e-02)
(7.3438e-01, -1.4948e-02)
(7.5000e-01, -1.2459e-02)
(7.6562e-01, -3.6990e-02)
(7.8125e-01, -1.7840e-01)
(7.9688e-01, -5.0514e-01)
(8.1250e-01, -5.1619e-01)
(8.2812e-01, -6.7618e-02)
(8.4375e-01, 2.4476e-01)
(8.5938e-01, 3.9644e-01)
(8.7500e-01, 4.5920e-01)
(8.9062e-01, 4.6950e-01)
(9.0625e-01, 4.3824e-01)
(9.2188e-01, 3.6751e-01)
(9.3750e-01, 2.6062e-01)
(9.5312e-01, 1.3052e-01)
(9.6875e-01, 5.2453e-03)
(9.8438e-01, -5.3042e-02)
  };  \addlegendentry{$\gamma_*=100$};

 \end{axis}
\end{tikzpicture}
\hfill
\begin{tikzpicture}
 \begin{axis}[%
 width=0.40\linewidth,
 height=0.42\linewidth,
%  scale only axis,
 xlabel=$x$,
 title={$y$},
 xmin=0,xmax=1,
 ]
 \addplot+[no marks,color=white,name path=minus] coordinates{
(1.5625e-02,  -9.8224e-01)
(3.1250e-02,  -9.6694e-01)
(4.6875e-02,  -1.0040e+00)
(6.2500e-02,  -1.1620e+00)
(7.8125e-02,  -1.3071e+00)
(9.3750e-02,  -1.3995e+00)
(1.0938e-01,  -1.4219e+00)
(1.2500e-01,  -1.3669e+00)
(1.4062e-01,  -1.2547e+00)
(1.5625e-01,  -1.0819e+00)
(1.7188e-01,  -9.7447e-01)
(1.8750e-01,  -9.5536e-01)
(2.0312e-01,  -9.7134e-01)
(2.1875e-01,  -9.8725e-01)
(2.3438e-01,  -1.0070e+00)
(2.5000e-01,  -1.0243e+00)
(2.6562e-01,  -1.0458e+00)
(2.8125e-01,  -1.0687e+00)
(2.9688e-01,  -1.0855e+00)
(3.1250e-01,  -1.1088e+00)
(3.2812e-01,  -1.1281e+00)
(3.4375e-01,  -1.1487e+00)
(3.5938e-01,  -1.1644e+00)
(3.7500e-01,  -1.1793e+00)
(3.9062e-01,  -1.1980e+00)
(4.0625e-01,  -1.2163e+00)
(4.2188e-01,  -1.2446e+00)
(4.3750e-01,  -1.3221e+00)
(4.5312e-01,  -1.4554e+00)
(4.6875e-01,  -1.5661e+00)
(4.8438e-01,  -1.6348e+00)
(5.0000e-01,  -1.6401e+00)
(5.1562e-01,  -1.5747e+00)
(5.3125e-01,  -1.4642e+00)
(5.4688e-01,  -1.3240e+00)
(5.6250e-01,  -1.2291e+00)
(5.7812e-01,  -1.1940e+00)
(5.9375e-01,  -1.1714e+00)
(6.0938e-01,  -1.1494e+00)
(6.2500e-01,  -1.1257e+00)
(6.4062e-01,  -1.0999e+00)
(6.5625e-01,  -1.0724e+00)
(6.7188e-01,  -1.0435e+00)
(6.8750e-01,  -1.0134e+00)
(7.0312e-01,  -9.8236e-01)
(7.1875e-01,  -9.5015e-01)
(7.3438e-01,  -9.1765e-01)
(7.5000e-01,  -8.8486e-01)
(7.6562e-01,  -8.5121e-01)
(7.8125e-01,  -8.1518e-01)
(7.9688e-01,  -7.8425e-01)
(8.1250e-01,  -8.2526e-01)
(8.2812e-01,  -9.8289e-01)
(8.4375e-01,  -1.2066e+00)
(8.5938e-01,  -1.3982e+00)
(8.7500e-01,  -1.5291e+00)
(8.9062e-01,  -1.5614e+00)
(9.0625e-01,  -1.4998e+00)
(9.2188e-01,  -1.3554e+00)
(9.3750e-01,  -1.1394e+00)
(9.5312e-01,  -8.7330e-01)
(9.6875e-01,  -5.7949e-01)
(9.8438e-01,  -2.8396e-01)
};
 \addplot+[no marks,color=white,name path=plus] coordinates{
(1.5625e-02,  -8.2448e-01)
(3.1250e-02,  -7.8096e-01)
(4.6875e-02,  -8.6184e-01)
(6.2500e-02,  -9.3354e-01)
(7.8125e-02,  -9.2623e-01)
(9.3750e-02,  -9.1258e-01)
(1.0938e-01,  -8.9668e-01)
(1.2500e-01,  -8.7806e-01)
(1.4062e-01,  -8.2179e-01)
(1.5625e-01,  -6.7106e-01)
(1.7188e-01,  -5.1949e-01)
(1.8750e-01,  -4.6059e-01)
(2.0312e-01,  -4.5447e-01)
(2.1875e-01,  -4.4927e-01)
(2.3438e-01,  -4.4469e-01)
(2.5000e-01,  -4.4887e-01)
(2.6562e-01,  -4.4565e-01)
(2.8125e-01,  -4.3790e-01)
(2.9688e-01,  -4.3526e-01)
(3.1250e-01,  -4.3443e-01)
(3.2812e-01,  -4.3356e-01)
(3.4375e-01,  -4.3083e-01)
(3.5938e-01,  -4.2833e-01)
(3.7500e-01,  -4.2985e-01)
(3.9062e-01,  -4.2863e-01)
(4.0625e-01,  -4.2483e-01)
(4.2188e-01,  -4.2786e-01)
(4.3750e-01,  -4.5955e-01)
(4.5312e-01,  -5.1427e-01)
(4.6875e-01,  -5.3214e-01)
(4.8438e-01,  -5.2326e-01)
(5.0000e-01,  -5.0864e-01)
(5.1562e-01,  -4.8982e-01)
(5.3125e-01,  -4.6293e-01)
(5.4688e-01,  -4.0474e-01)
(5.6250e-01,  -3.5497e-01)
(5.7812e-01,  -3.3023e-01)
(5.9375e-01,  -3.1645e-01)
(6.0938e-01,  -3.0457e-01)
(6.2500e-01,  -2.9362e-01)
(6.4062e-01,  -2.8212e-01)
(6.5625e-01,  -2.7037e-01)
(6.7188e-01,  -2.5863e-01)
(6.8750e-01,  -2.4616e-01)
(7.0312e-01,  -2.3432e-01)
(7.1875e-01,  -2.2319e-01)
(7.3438e-01,  -2.1394e-01)
(7.5000e-01,  -2.0536e-01)
(7.6562e-01,  -1.9563e-01)
(7.8125e-01,  -1.8421e-01)
(7.9688e-01,  -1.7712e-01)
(8.1250e-01,  -1.8953e-01)
(8.2812e-01,  -1.8300e-01)
(8.4375e-01,  -1.7282e-01)
(8.5938e-01,  -1.6106e-01)
(8.7500e-01,  -1.4821e-01)
(8.9062e-01,  -1.3339e-01)
(9.0625e-01,  -1.1720e-01)
(9.2188e-01,  -9.9546e-02)
(9.3750e-01,  -8.0926e-02)
(9.5312e-01,  -6.1625e-02)
(9.6875e-01,  -4.1671e-02)
(9.8438e-01,  -2.1687e-02)
};

\addplot[blue!16!white] fill between[of = minus and plus];

 \addplot+[blue,no marks] coordinates{
(1.5625e-02, -9.4350e-01)
(3.1250e-02, -9.2636e-01)
(4.6875e-02, -9.4731e-01)
(6.2500e-02, -9.8626e-01)
(7.8125e-02, -1.0212e+00)
(9.3750e-02, -1.0374e+00)
(1.0938e-01, -1.0281e+00)
(1.2500e-01, -9.9272e-01)
(1.4062e-01, -9.3534e-01)
(1.5625e-01, -8.6522e-01)
(1.7188e-01, -8.0175e-01)
(1.8750e-01, -7.7765e-01)
(2.0312e-01, -7.7045e-01)
(2.1875e-01, -7.6545e-01)
(2.3438e-01, -7.6047e-01)
(2.5000e-01, -7.5534e-01)
(2.6562e-01, -7.5007e-01)
(2.8125e-01, -7.4468e-01)
(2.9688e-01, -7.3921e-01)
(3.1250e-01, -7.3364e-01)
(3.2812e-01, -7.2792e-01)
(3.4375e-01, -7.2198e-01)
(3.5938e-01, -7.1572e-01)
(3.7500e-01, -7.0905e-01)
(3.9062e-01, -7.0200e-01)
(4.0625e-01, -6.9508e-01)
(4.2188e-01, -6.9097e-01)
(4.3750e-01, -7.0099e-01)
(4.5312e-01, -7.3664e-01)
(4.6875e-01, -7.6917e-01)
(4.8438e-01, -7.8355e-01)
(5.0000e-01, -7.7478e-01)
(5.1562e-01, -7.4239e-01)
(5.3125e-01, -6.8992e-01)
(5.4688e-01, -6.2873e-01)
(5.6250e-01, -5.8842e-01)
(5.7812e-01, -5.6753e-01)
(5.9375e-01, -5.5088e-01)
(6.0938e-01, -5.3455e-01)
(6.2500e-01, -5.1785e-01)
(6.4062e-01, -5.0071e-01)
(6.5625e-01, -4.8322e-01)
(6.7188e-01, -4.6547e-01)
(6.8750e-01, -4.4756e-01)
(7.0312e-01, -4.2955e-01)
(7.1875e-01, -4.1148e-01)
(7.3438e-01, -3.9335e-01)
(7.5000e-01, -3.7515e-01)
(7.6562e-01, -3.5685e-01)
(7.8125e-01, -3.3846e-01)
(7.9688e-01, -3.2315e-01)
(8.1250e-01, -3.3575e-01)
(8.2812e-01, -3.8695e-01)
(8.4375e-01, -4.4183e-01)
(8.5938e-01, -4.8358e-01)
(8.7500e-01, -5.0467e-01)
(8.9062e-01, -5.0178e-01)
(9.0625e-01, -4.7429e-01)
(9.2188e-01, -4.2384e-01)
(9.3750e-01, -3.5415e-01)
(9.5312e-01, -2.7084e-01)
(9.6875e-01, -1.8060e-01)
(9.8438e-01, -8.9528e-02)
};

 \addplot+[red,no marks] coordinates{
(1.5625e-02, -9.4313e-01)
(3.1250e-02, -9.2562e-01)
(4.6875e-02, -9.4559e-01)
(6.2500e-02, -9.8309e-01)
(7.8125e-02, -1.0165e+00)
(9.3750e-02, -1.0310e+00)
(1.0938e-01, -1.0200e+00)
(1.2500e-01, -9.8291e-01)
(1.4062e-01, -9.2361e-01)
(1.5625e-01, -8.5165e-01)
(1.7188e-01, -7.8818e-01)
(1.8750e-01, -7.6377e-01)
(2.0312e-01, -7.5570e-01)
(2.1875e-01, -7.4973e-01)
(2.3438e-01, -7.4361e-01)
(2.5000e-01, -7.3738e-01)
(2.6562e-01, -7.3118e-01)
(2.8125e-01, -7.2511e-01)
(2.9688e-01, -7.1924e-01)
(3.1250e-01, -7.1358e-01)
(3.2812e-01, -7.0811e-01)
(3.4375e-01, -7.0277e-01)
(3.5938e-01, -6.9745e-01)
(3.7500e-01, -6.9201e-01)
(3.9062e-01, -6.8639e-01)
(4.0625e-01, -6.8089e-01)
(4.2188e-01, -6.7813e-01)
(4.3750e-01, -6.8853e-01)
(4.5312e-01, -7.2295e-01)
(4.6875e-01, -7.5585e-01)
(4.8438e-01, -7.7067e-01)
(5.0000e-01, -7.6205e-01)
(5.1562e-01, -7.2953e-01)
(5.3125e-01, -6.7675e-01)
(5.4688e-01, -6.1667e-01)
(5.6250e-01, -5.7720e-01)
(5.7812e-01, -5.5537e-01)
(5.9375e-01, -5.3774e-01)
(6.0938e-01, -5.2039e-01)
(6.2500e-01, -5.0283e-01)
(6.4062e-01, -4.8511e-01)
(6.5625e-01, -4.6736e-01)
(6.7188e-01, -4.4969e-01)
(6.8750e-01, -4.3216e-01)
(7.0312e-01, -4.1480e-01)
(7.1875e-01, -3.9759e-01)
(7.3438e-01, -3.8051e-01)
(7.5000e-01, -3.6349e-01)
(7.6562e-01, -3.4635e-01)
(7.8125e-01, -3.2888e-01)
(7.9688e-01, -3.1460e-01)
(8.1250e-01, -3.2659e-01)
(8.2812e-01, -3.7703e-01)
(8.4375e-01, -4.3306e-01)
(8.5938e-01, -4.7588e-01)
(8.7500e-01, -4.9789e-01)
(8.9062e-01, -4.9586e-01)
(9.0625e-01, -4.6920e-01)
(9.2188e-01, -4.1954e-01)
(9.3750e-01, -3.5060e-01)
(9.5312e-01, -2.6799e-01)
(9.6875e-01, -1.7832e-01)
(9.8438e-01, -8.7905e-02)
 };

 \addplot+[brown,solid,no marks] coordinates{
(1.5625e-02, -9.4591e-01)
(3.1250e-02, -9.3119e-01)
(4.6875e-02, -9.5284e-01)
(6.2500e-02, -9.9071e-01)
(7.8125e-02, -1.0239e+00)
(9.3750e-02, -1.0380e+00)
(1.0938e-01, -1.0265e+00)
(1.2500e-01, -9.8867e-01)
(1.4062e-01, -9.2839e-01)
(1.5625e-01, -8.5550e-01)
(1.7188e-01, -7.8937e-01)
(1.8750e-01, -7.5390e-01)
(2.0312e-01, -7.3857e-01)
(2.1875e-01, -7.2854e-01)
(2.3438e-01, -7.1949e-01)
(2.5000e-01, -7.1097e-01)
(2.6562e-01, -7.0306e-01)
(2.8125e-01, -6.9580e-01)
(2.9688e-01, -6.8919e-01)
(3.1250e-01, -6.8324e-01)
(3.2812e-01, -6.7793e-01)
(3.4375e-01, -6.7323e-01)
(3.5938e-01, -6.6910e-01)
(3.7500e-01, -6.6549e-01)
(3.9062e-01, -6.6251e-01)
(4.0625e-01, -6.6107e-01)
(4.2188e-01, -6.6519e-01)
(4.3750e-01, -6.8419e-01)
(4.5312e-01, -7.2076e-01)
(4.6875e-01, -7.5409e-01)
(4.8438e-01, -7.6915e-01)
(5.0000e-01, -7.6029e-01)
(5.1562e-01, -7.2699e-01)
(5.3125e-01, -6.7331e-01)
(5.4688e-01, -6.1189e-01)
(5.6250e-01, -5.6522e-01)
(5.7812e-01, -5.3638e-01)
(5.9375e-01, -5.1479e-01)
(6.0938e-01, -4.9510e-01)
(6.2500e-01, -4.7604e-01)
(6.4062e-01, -4.5746e-01)
(6.5625e-01, -4.3937e-01)
(6.7188e-01, -4.2180e-01)
(6.8750e-01, -4.0475e-01)
(7.0312e-01, -3.8823e-01)
(7.1875e-01, -3.7219e-01)
(7.3438e-01, -3.5658e-01)
(7.5000e-01, -3.4132e-01)
(7.6562e-01, -3.2618e-01)
(7.8125e-01, -3.1123e-01)
(7.9688e-01, -3.0152e-01)
(8.1250e-01, -3.1772e-01)
(8.2812e-01, -3.6858e-01)
(8.4375e-01, -4.2558e-01)
(8.5938e-01, -4.6962e-01)
(8.7500e-01, -4.9256e-01)
(8.9062e-01, -4.9128e-01)
(9.0625e-01, -4.6528e-01)
(9.2188e-01, -4.1620e-01)
(9.3750e-01, -3.4774e-01)
(9.5312e-01, -2.6539e-01)
(9.6875e-01, -1.7589e-01)
(9.8438e-01, -8.6355e-02)
 };

 \addplot+[green!50!black,solid,no marks] coordinates{
(1.5625e-02, -9.4369e-01)
(3.1250e-02, -9.2674e-01)
(4.6875e-02, -9.4761e-01)
(6.2500e-02, -9.8529e-01)
(7.8125e-02, -1.0185e+00)
(9.3750e-02, -1.0327e+00)
(1.0938e-01, -1.0214e+00)
(1.2500e-01, -9.8383e-01)
(1.4062e-01, -9.2404e-01)
(1.5625e-01, -8.5159e-01)
(1.7188e-01, -7.8324e-01)
(1.8750e-01, -7.3830e-01)
(2.0312e-01, -7.1407e-01)
(2.1875e-01, -6.9859e-01)
(2.3438e-01, -6.8591e-01)
(2.5000e-01, -6.7457e-01)
(2.6562e-01, -6.6444e-01)
(2.8125e-01, -6.5561e-01)
(2.9688e-01, -6.4811e-01)
(3.1250e-01, -6.4194e-01)
(3.2812e-01, -6.3710e-01)
(3.4375e-01, -6.3352e-01)
(3.5938e-01, -6.3118e-01)
(3.7500e-01, -6.3014e-01)
(3.9062e-01, -6.3104e-01)
(4.0625e-01, -6.3602e-01)
(4.2188e-01, -6.5002e-01)
(4.3750e-01, -6.7878e-01)
(4.5312e-01, -7.1816e-01)
(4.6875e-01, -7.5137e-01)
(4.8438e-01, -7.6618e-01)
(5.0000e-01, -7.5737e-01)
(5.1562e-01, -7.2443e-01)
(5.3125e-01, -6.7126e-01)
(5.4688e-01, -6.0833e-01)
(5.6250e-01, -5.5313e-01)
(5.7812e-01, -5.1402e-01)
(5.9375e-01, -4.8574e-01)
(6.0938e-01, -4.6233e-01)
(6.2500e-01, -4.4104e-01)
(6.4062e-01, -4.2103e-01)
(6.5625e-01, -4.0213e-01)
(6.7188e-01, -3.8438e-01)
(6.8750e-01, -3.6778e-01)
(7.0312e-01, -3.5234e-01)
(7.1875e-01, -3.3800e-01)
(7.3438e-01, -3.2465e-01)
(7.5000e-01, -3.1209e-01)
(7.6562e-01, -3.0018e-01)
(7.8125e-01, -2.9021e-01)
(7.9688e-01, -2.8960e-01)
(8.1250e-01, -3.1550e-01)
(8.2812e-01, -3.6850e-01)
(8.4375e-01, -4.2505e-01)
(8.5938e-01, -4.6874e-01)
(8.7500e-01, -4.9164e-01)
(8.9062e-01, -4.9043e-01)
(9.0625e-01, -4.6458e-01)
(9.2188e-01, -4.1573e-01)
(9.3750e-01, -3.4759e-01)
(9.5312e-01, -2.6577e-01)
(9.6875e-01, -1.7710e-01)
(9.8438e-01, -8.8160e-02)
 };

\addplot+[black,dashed,no marks,domain=0:1] {0*x} node [pos=0.5,anchor=north] {$\psi=0$};

\end{axis}
\end{tikzpicture}
\caption{Left: control signals $u_{\gamma_*}(x)$ for different $\gamma_*$. Right: mean (solid lines) and 95\% confidence interval (shaded area, for $\gamma_*=3000$ only) of the state $y_{\gamma_*}(x,\xi)$.
}
\label{fig:1dsol}
\end{figure}

In Figure~\ref{fig:1dsol} we show the solutions (control and state) for varying final Moreau-Yosida penalty parameter $\gamma_*$,
fixing $n_y=63$, $n_{\xi}=129$ and the TT approximation tolerance of $10^{-6}$.
We see that the solution converges with increasing $\gamma_*$, and larger $\gamma_*$ yields a smaller probability of the constraint violation, albeit at a larger misfit cost $j(u)$, as shown in Figure~\ref{fig:1dcost}.
In particular, $\gamma_* > 300$ gives a solution with less than $1\%$ of the constraint violation, such that the empirical $95\%$ confidence interval computed using $1000$ samples of the converged state field $y_{\gamma_*}$ (see Fig.~\ref{fig:1dsol}, right) is entirely within the constraint.

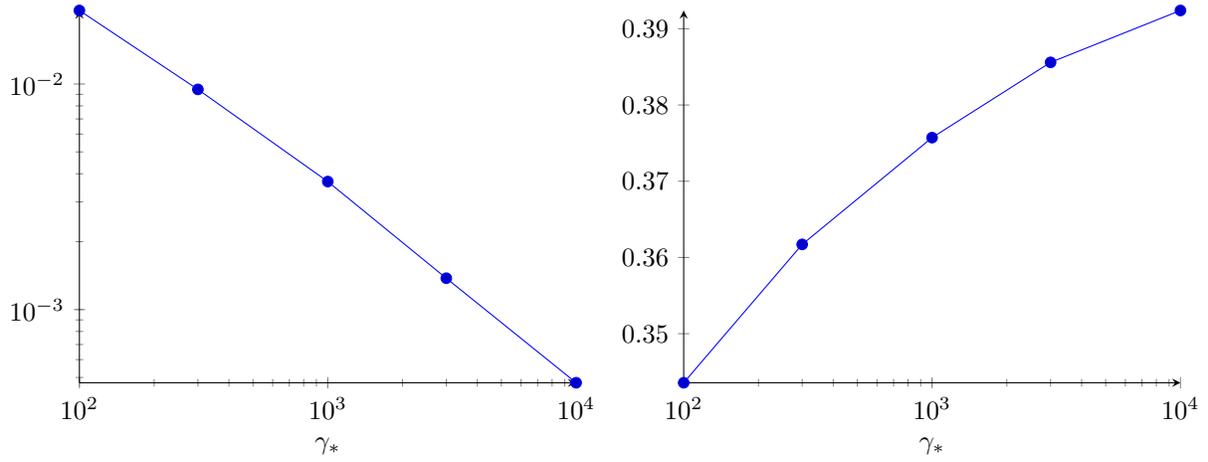
\begin{figure}[h!]
\centering
\noindent
\begin{tikzpicture}
\begin{axis}[%
 width=0.4\linewidth,
 height=0.3\linewidth,
 xmode=log,
 xlabel=$\gamma_*$,
 ymode=log,
 legend style={at={(0.01,0.01)},anchor=south west},
]
\addplot+[] coordinates{
(100,   0.021188   )
(300,   0.00946587 )
(1000,  0.00369432 )
(3000,  0.00137732 )
(10000, 0.000473718)
};
\end{axis}
\end{tikzpicture}
\begin{tikzpicture}
\begin{axis}[%
 width=0.4\linewidth,
 height=0.3\linewidth,
 xmode=log,
 xlabel=$\gamma_*$,
 ymode=normal,
 legend style={at={(0.01,0.01)},anchor=south west},
]
\addplot+[] coordinates{
(100,   0.343564)
(300,   0.361714)
(1000,  0.375718)
(3000,  0.385589)
(10000,  0.392396)
};
\end{axis}
\end{tikzpicture}
\caption{Left: probability of the constraint violation, $\mathbb{P}[y_{\gamma_*}(x,\xi)>0]$. Right: total final cost $j(u_{\gamma^*})$.}
\label{fig:1dcost}
\end{figure}

\begin{figure}[h!]
\noindent
\begin{tikzpicture}
\begin{axis}[%
 width=0.24\linewidth,
 height=0.25\linewidth,
 xmode=log,
 ymode=log,
 xlabel=$\gamma_*$,
 title=relative error,
 ]
 \addplot+[] coordinates{
   (1.0000e+02,   2.7252e-01)
   (3.0000e+02,   1.4798e-01)
   (1.0000e+03,   6.0698e-02)
   (3.0000e+03,   2.7418e-02)
 }; \addlegendentry{$u$};

 \addplot+[] coordinates{
   (1.0000e+02,   1.2611e-01)
   (3.0000e+02,   7.8444e-02)
   (1.0000e+03,   4.4097e-02)
   (3.0000e+03,   1.7411e-02)
 }; \addlegendentry{$y$};

 \addplot+[solid,black,no marks,domain=100:3000] {x^(-0.75)*11}; \addlegendentry{$\gamma_*^{-0.75}$};
 \addplot+[dashed,black,no marks,domain=100:3000] {x^(-0.5)*1.35}; \addlegendentry{$\gamma_*^{-0.5}$};
\end{axis}
\end{tikzpicture}
\hfill
\begin{tikzpicture}
% tol=1e-8
\begin{axis}[%
width=0.24\linewidth,
height=0.25\linewidth,
xlabel=$n_{\xi}$,
xmode=normal,
ymode=log,
title={relative error},
xmin=5,xmax=50,
]
\addplot+[] coordinates{
(5 , 1.477e-02)
(7 , 1.379e-02)
(9 , 1.912e-03)
(13, 4.106e-04)
(17, 1.296e-04)
(25, 1.238e-05)
(33, 1.557e-05)
(49, 2.038e-05)
(65, 2.243e-05)
}; \addlegendentry{$u$};
\addplot+[] coordinates{
(5 , 2.995e-03)
(7 , 6.101e-03)
(9 , 2.467e-04)
(13, 4.197e-05)
(17, 1.469e-05)
(25, 6.073e-06)
(33, 1.194e-05)
(49, 1.551e-05)
(65, 1.699e-05)
}; \addlegendentry{$y$};
\end{axis}
\end{tikzpicture}
\begin{tikzpicture}
% tol=1e-8
\begin{axis}[%
width=0.24\linewidth,
height=0.25\linewidth,
xlabel=$n_{y}$,
xmode=log,
xtick={31,65,127,255},
xticklabels={31,65,127,255},
xmin=30,xmax=300,
ymode=log,
title={relative error},
]
\addplot+[] coordinates{
(31,  1.5652e-01)
(65,  5.0169e-02)
(127, 1.8872e-02)
(255, 4.7021e-03)
}; \addlegendentry{$u$};
\addplot+[] coordinates{
(31,  3.3541e-02)
(65,  6.5895e-03)
(127, 3.7548e-03)
(255, 1.0513e-03)
}; \addlegendentry{$y$};

\addplot+[solid,black,no marks,domain=30:300] {x^(-1.5)*25}; \addlegendentry{$n_y^{-1.5}$};
\addplot+[solid,black,no marks,domain=30:300] {x^(-1.5)*5};
\end{axis}
\end{tikzpicture}
\caption{Relative $L^2$-norm difference from $y$ and $u$ to the reference solutions with $\gamma_*=10^4$ with fixed $n_{\xi}=257$, $n_y=63$ (left), $n_{\xi}=129$ with fixed $\gamma_*=100$, $n_y=63$ (middle) and $n_y=511$ with fixed $\gamma_*=100$, $n_{\xi}=25$ (right).}
\label{fig:1dconv}
\end{figure}
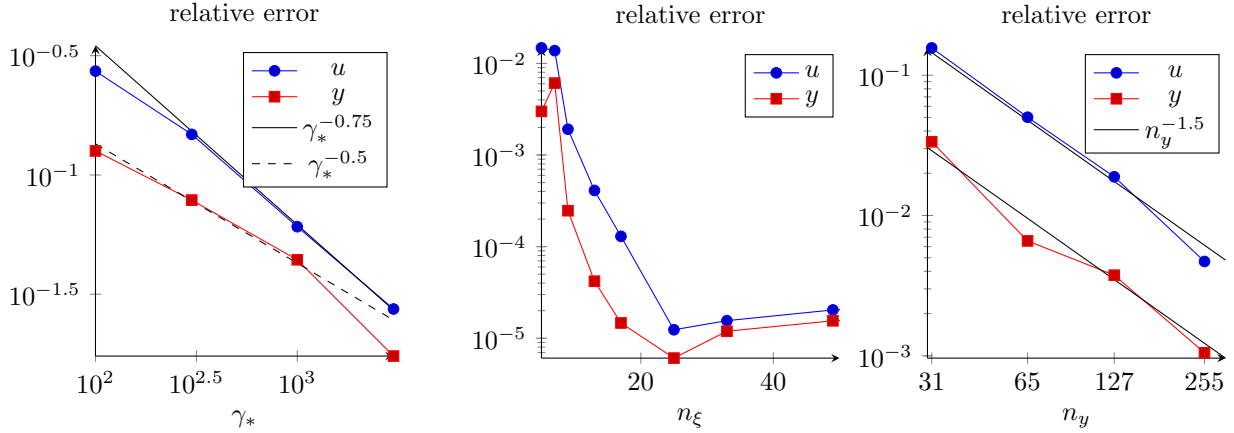

Finally, we study the convergence in the approximation parameters more systematically in Figure~\ref{fig:1dconv}.
In each plot we fix two out of three parameters: the final Moreau-Yosida penalty $\gamma_*$, the number of discretization points in the random variables $n_{\xi}$, and the number of discretization points in space $n_y$.
In addition, we fix the TT approximation threshold to $10^{-8}$ to reduce its influence.
We observe a convergence rate of $\gamma_*^{-1/2}$ in the penalty parameter, and an exponential convergence in $n_{\xi}$ (which is often the case for a polynomial approximation of smooth functions~\cite{trefethen-spectral-2000}) until the tensor approximation error is hit, and between first and second order in $n_y$, which seems to be an interplay of the discretization consistency of the linear finite elements (second order) and box constraints (first order).

\subsection{Two-dimensional elliptic PDE}
Now consider a two-dimensional extension of the previous problem, 
\begin{align}\label{eq:2dpde}
 \nu(\xi) \Delta y(x,\xi) & = f(x,\xi) + u(x), & (x,\xi) & \in D \times \mathbb{R}^6, \\
 y|_{x_1=0} & = b_1(\xi) (1-x_2) + b_2(\xi) x_2, &
 y|_{x_2=1} & = b_2(\xi) (1-x_1) + b_3(\xi) x_1 \\
 y|_{x_1=1} & = b_4(\xi) (1-x_2) + b_3(\xi) x_2, &
 y|_{x_2=0} & = b_1(\xi) (1-x_1) + b_4(\xi) x_1, \\
 \nu(\xi) & = 10^{\xi_1(\omega)-2}, & f(x,\xi) & = \frac{\xi_2(\omega)}{100}, \\
 b_1(\xi) & = -1-\frac{\xi_3(\omega)}{1000}, &
 b_2(\xi) & = -\frac{2+\xi_4(\omega)}{1000}, \\
 b_3(\xi) & = -1-\frac{\xi_5(\omega)}{1000}, &
 b_4(\xi) & = -\frac{2+\xi_6(\omega)}{1000},
\end{align}
where $D = (0,1)^2$, and $\xi(\omega) = (\xi_1(\omega), \ldots, \xi_6(\omega)) \sim \mathcal{U}(-1,1)^6$ is uniformly distributed.
We optimize the regularized misfit functional
$$
j(u) = \frac{1}{2}\mathbb{E}\left[\|y(x,\xi) - y_d(x)\|_{L^2(D)}^2\right] + \frac{\alpha}{2}\|u(x)\|_{L^2(D)}^2
$$
with the desired state $y_d(x) = -\sin(50 x_1/\pi) \cos(50 x_2/\pi)$ and the regularization parameter $\alpha=10^{-2}$,
subject to constraints
$$
 -y(x,\xi) \ge \psi(x)\equiv 0 \mbox{\quad a.s., \quad and \quad}  -0.75 \le u(x) \le 0.75 \quad \mbox{a.e}.
$$
We smooth the almost sure constraint by the Moreau-Yosida method with the ultimate penalty parameter $\gamma_*=10^2$.

We discretize both $y$ and $u$ in \eqref{eq:2dpde} using bilinear finite elements on a $n_y \times n_y$ rectangular grid.
For the two-dimensional problem, the operator $\mathbf{\tilde S}_h^*$ is a dense matrix of size $n_y^2 \times n_y^2$, which we are unable to precompute.
Therefore, we use the TT-Cross to approximate $\mathbf{G}_u^{\varepsilon_{\gamma_{\ell}},h}(\xi)$ directly.

In Figure~\ref{fig:2dsol} we show the optimal control, mean and standard deviation of the solution for $n_y=63$ and $n_{\xi}=17$.
We see that the mean solution reflects the desired state subject to the constraints.
The final cost $j(u_{\gamma_*})$ is about $0.222634$,
and the probability of the constraint violation is $0.0139223$.
The Newton method took $L=37$ iterations to converge,
the maximal TT rank of $\mathbf{\tilde y}(\xi)$ was $10$ which was the same in all iterations,
the maximal rank of $g'_{\varepsilon_{\gamma_{\ell}}}(\mathbf{\tilde y} + \mathbf{y}_f-\boldsymbol\psi_{h})$ was $300$, attained at the iteration after reaching $\gamma_*$ (iteration $9$),
and the maximal rank of $\mathbf{\tilde G}_u^{\varepsilon_{\gamma_{\ell}},h}(\xi)$ was $56$ (in the final iterations).
The computation took about a day of CPU time.
However, these TT ranks are comparable to those in the one-dimensional example.
This shows that the proposed technique can be also applied to a high-dimensional physical space, including complex domains and non-uniform grids, since the TT structure is independent of the spatial discretization.

\begin{figure}[h!]
\noindent
\includegraphics[width=0.32\linewidth]{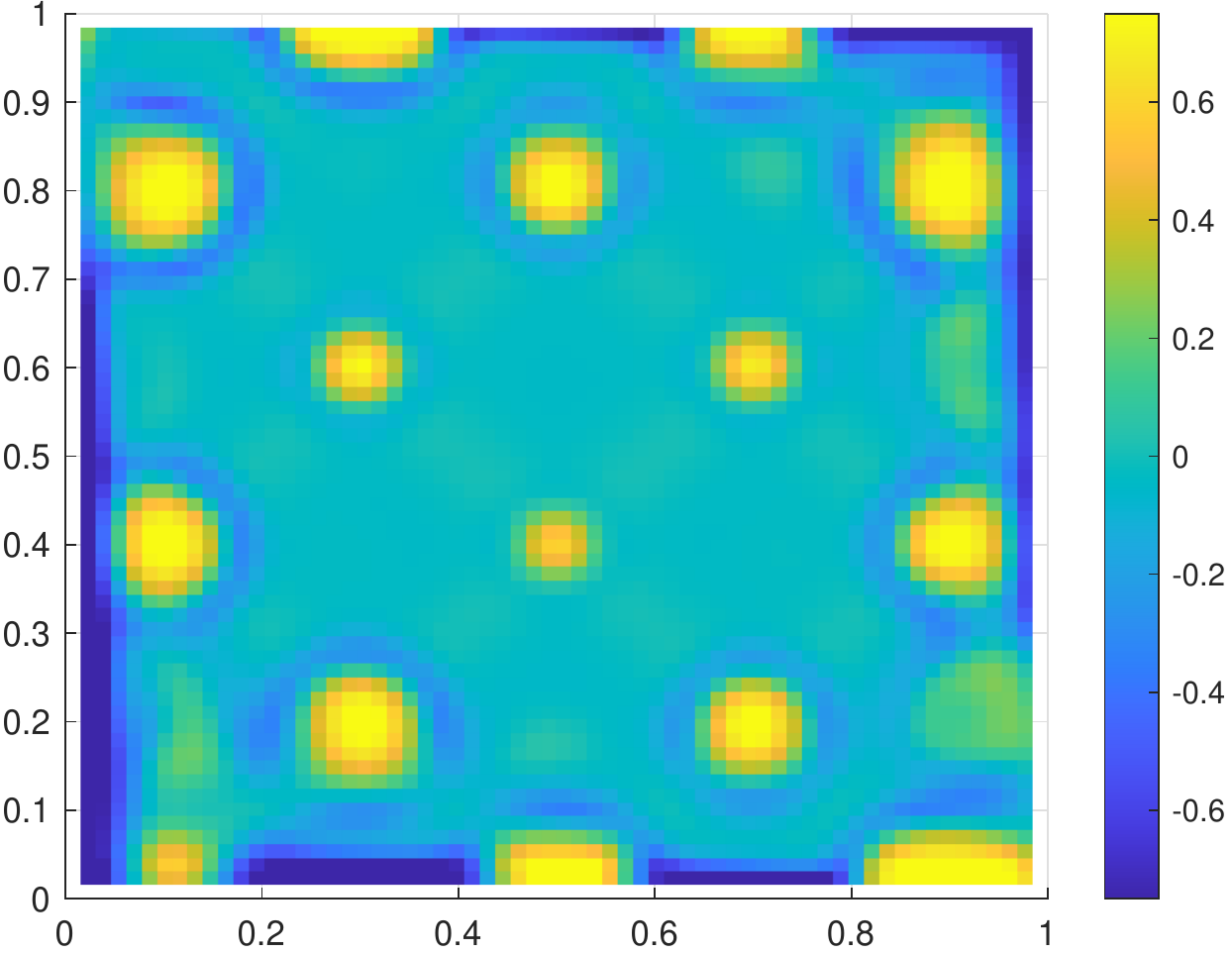}
\includegraphics[width=0.32\linewidth]{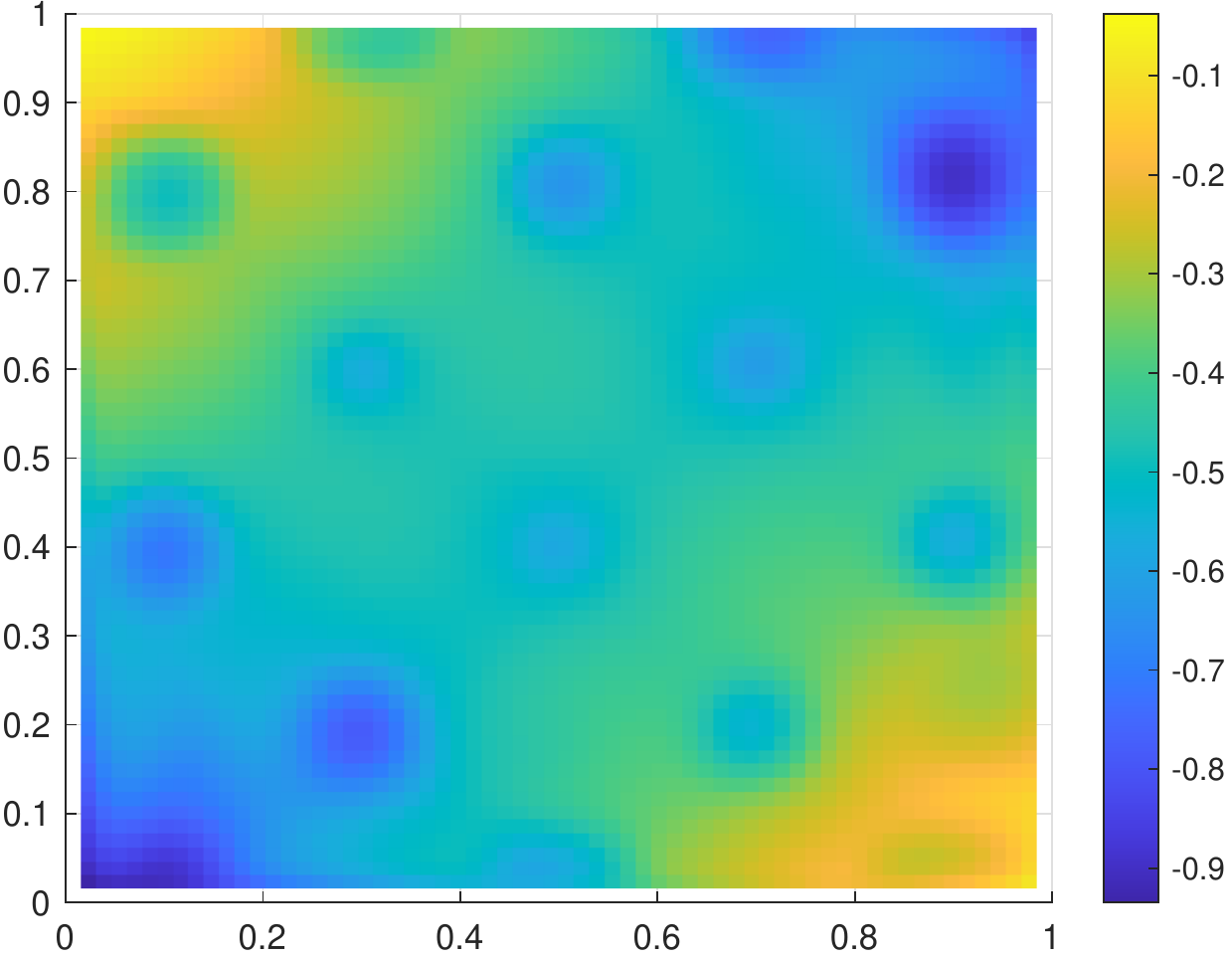}
\includegraphics[width=0.32\linewidth]{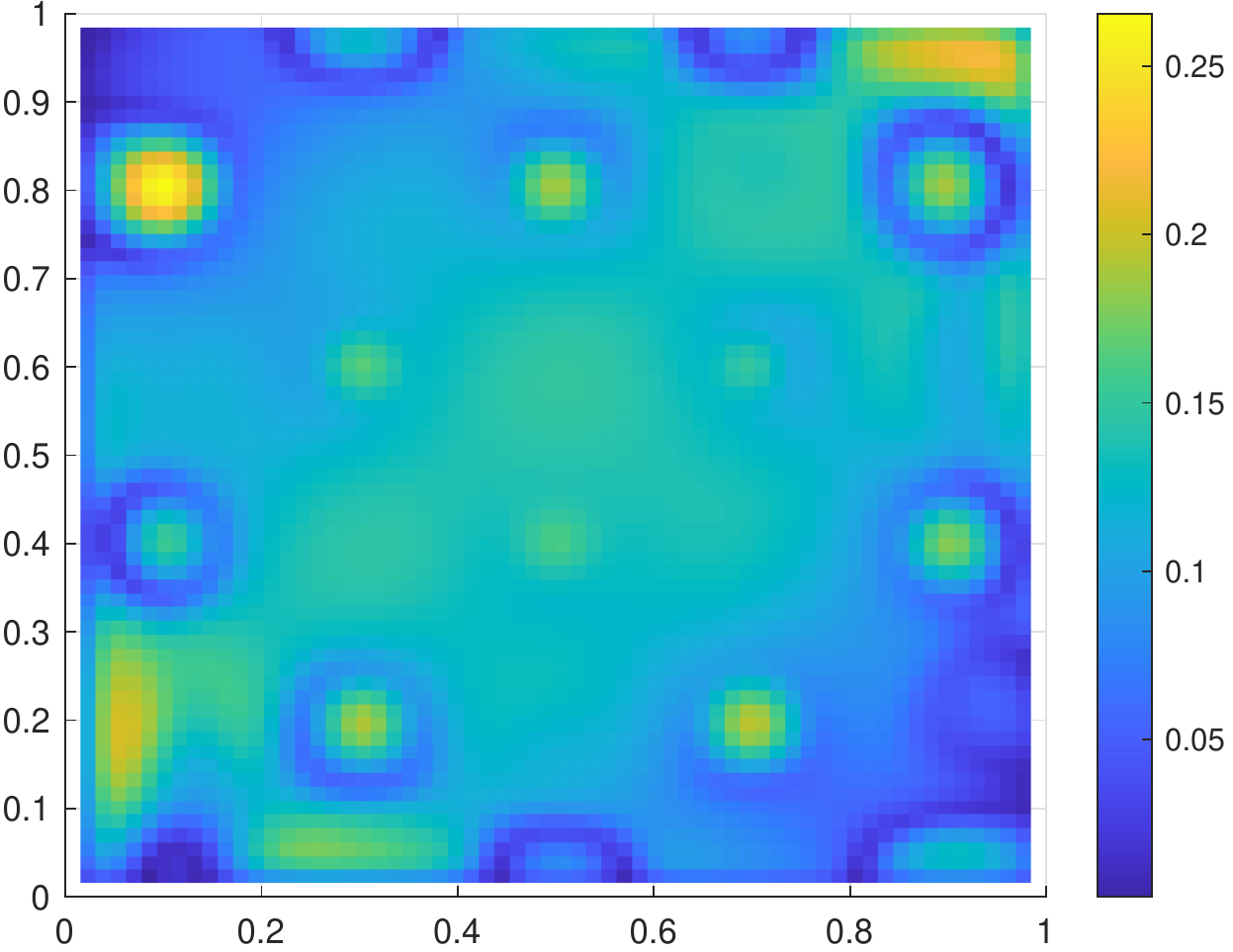}
\caption{Left: control signal $u_{\gamma_*}(x)$. Middle: mean $\mathbb{E}_{\mathbb{P}}[y_{\gamma_*}(x,\xi)]$. Right: standard deviation $\sqrt{\mathbb{E}_{\mathbb{P}}[(y_{\gamma_*}(x,\xi) - \mathbb{E}_{\mathbb{P}}[y_{\gamma_*}(x,\xi)])^2]}$.}
\label{fig:2dsol}
\end{figure}

\subsection{Variational inequality constraints}
\label{s:VI}

In this section we minimize the regularized misfit
\begin{equation}\label{eq:vi-cost}
j(u) = \frac{1}{2}\mathbb{E}_{\mathbb{P}}[\|y(x,\xi) - y_d(x)\|_{L^2(D)}^2] + \frac{1}{2}\|u(x)\|_{L^2(D)}^2
\end{equation}
subject to a random elliptic variational inequality (VI) constraint,
\begin{equation}\label{eq:vi}
y(x,\xi) \le 0: \quad \langle A(\xi) y(x,\xi) - f(x,\xi) - B(x,\xi) u, y(x,\xi) - v \rangle \le 0, \quad \forall v: v \le 0.
\end{equation}
We use Example 5.1 from \cite{Surowiec-vi-2022} (with the reversed sign of $y$), where $D=(0,1)^2$, $A=-\Delta$, $B=\mathrm{Id}$, and deterministic functions constructing the desired state:
\begin{align*}
 \hat y(x) & = \begin{cases}160(x_1^3 - x_1^2 +0.25x_1)(x_2^3 - x_2^2 + 0.25x_2) & \mbox{in } (0,0.5)^2, \\ 0, & \mbox{otherwise},\end{cases} \\
 \hat \zeta(x) & = \max(0,-2|x_1-0.8|-2|x_1x_2-0.3|+0.5), \\
 y_d(x) & = -\hat y - \hat \zeta + \Delta \hat y.
\end{align*}
In contrast, the right hand side depends on the random variables,
\begin{align*}
f(x,\xi(\omega)) & = \Delta \hat y + \hat y + \hat \zeta + b(x,\xi(\omega)), \\
b(x,\xi(\omega)) & = \begin{cases}\sum_{i=1}^{d} \sqrt{\lambda_i} \phi_i(x) \xi_i(\omega), & \mbox{in } (0,0.5) \times (0,1), \\ 0, & \mbox{otherwise}.\end{cases}
\end{align*}
The Karhunen-Loeve expansion in $b(x,\xi)$ is an affine-uniform random field, with $\xi_i(\omega) \sim \mathcal{U}(-1,1)$, $\phi_i(x) = 2 \cos(\pi j x_2) \cos(\pi k x_1)$ and $\lambda_i = \frac{1}{100} \exp(-\frac{\pi}{4}(j^2+k^2))$,
where the pairs $(j,k)$, $j,k=1,2,\ldots,$ are permuted such that $\lambda_1 \ge \lambda_2 \ge \cdots$.

The VI \eqref{eq:vi} is replaced by the penalized problem
\begin{equation}\label{eq:vi-smooth}
A y + \frac{1}{\varepsilon} g_{\varepsilon} (y) = f(x,\xi) + B u,
\end{equation}
so we minimize \eqref{eq:vi-cost} with $y(x,\xi)$ plugged in from \eqref{eq:vi-smooth}.
The latter equation is solved via the Newton method, initialized with $y=0$ as the initial guess, and stopped when the relative difference between two consecutive iterations of $y$ falls below $10^{-12}$.
The problem is discretized in $x$ via the piecewise bilinear finite elements on a uniform $n_y \times n_y$ grid with cell size $h=1/(n_y+1)$.
The homogeneous Dirichlet boundary conditions $y=0$ on $\partial D$ allow us to store only interior grid points.
This gives us a discrete problem of minimizing
\begin{equation}\label{eq:vi-cost-d}
j_h(\mathbf{u}) = \frac{1}{2}\mathbb{E}_{\mathbb{P}}[\|\mathbf{y}(\xi) - \mathbf{y}_d\|_{\mathbf{M}_h}^2] + \frac{1}{2}\|\mathbf{u}\|_{\mathbf{M}_h}^2
\end{equation}
subject to
\begin{equation}\label{eq:vi-smooth-d}
\mathbf{A}_{h} \mathbf{y} + \frac{1}{\varepsilon} g_{\varepsilon} (\mathbf{y}) = \mathbf{f}(\xi) + \mathbf{u},
\end{equation}
where $\mathbf{A}_h, \mathbf{M}_h \in \mathbb{R}^{n_y^2 \times n_y^2}$ are the stiffness and mass matrices, respectively.
The resolution operator of \eqref{eq:vi-smooth-d} is denoted by $\mathbf{S}_h(\xi)$.

The state part of the cost
$$
j_y(\mathbf{u}; \xi) = \frac{1}{2}\|\mathbf{S}_h(\xi) \mathbf{u} - \mathbf{y}_d\|_{\mathbf{M}_h}^2
$$
and its gradient
$$
\nabla_u j_y(\mathbf{u}; \xi) = \mathbf{S}_h^*(\xi) \mathbf{M}_h (\mathbf{S}_h(\xi) \mathbf{u} - \mathbf{y}_d)
$$
are approximated by the TT-Cross (as functions of $\xi$), which allows one to compute the expectation of $\tilde j_y(\mathbf{u}; \xi) \approx j_y(\mathbf{u};\xi)$ and $\nabla_u \tilde j_y(\mathbf{u}; \xi) \approx \nabla_u j_y(\mathbf{u};\xi)$ easily.
The forward model \eqref{eq:vi-smooth-d} is solved at each evaluation of $\xi$ in the TT-Cross.
However, to avoid excessive computations, the Hessian of \eqref{eq:vi-cost-d} is approximated by that anchored at the mean point $\xi=0$:
$$
\nabla_{uu} j_h(\mathbf{u}) \approx \mathbf{\tilde H} := \mathbf{S}_h^*(0) \mathbf{M}_h \mathbf{S}_h'(0) + \mathbf{M}_h.
$$
The Newton system $\mathbf{\tilde H}^{-1} \nabla_u j_h$ is solved iteratively by using the CG method, since the matrix-vector product with $\mathbf{\tilde H}$ requires the solution of only one forward and one adjoint problem,
\begin{equation}\label{eq:vi-grady}
\mathbf{S}_h^* \cdot \mathbf{v} = \mathbf{S}_h' \cdot \mathbf{v} = \left(\mathbf{A}_h + \mathrm{diag}\left(\frac{1}{\varepsilon} g_{\varepsilon}'(\mathbf{y})\right)\right)^{-1} \mathbf{v}, \quad \forall \mathbf{v} \in \mathbb{R}^{n_y^2}.
\end{equation}

In Table~\ref{tab:vi} we vary the dimension of the random variable $d$, the number of quadrature points in each random variable $n_{\xi}$, and the approximation tolerance in the TT-Cross (tol).
The spatial grid size is fixed to $n_y=31$, which is comparable with the resolution in \cite{Surowiec-vi-2022},
and the smoothing parameter $\varepsilon=10^{-6}$.
As a reference solution $\mathbf{u}_*$, we take the control computed with $d=20$, $n_{\xi}=5$ and $\mbox{tol}=10^{-4}$.
We see that the control and the cost can be approximated quite accurately even with a very low order of the polynomial approximation in $\xi$.
It also seems unnecessary to keep $20$ terms in the Karhunen-Loeve expansion.

The computation complexity is dominated by the solutions of the forward and adjoint problems. The article \cite{Surowiec-vi-2022} reports a ``\# PDE solves'' in a path-following stochastic variance reduced gradient method solving \eqref{eq:vi-cost}--\eqref{eq:vi}.
We believe this indicates the number of the complete solutions of the PDE \eqref{eq:vi-smooth-d}.
However, each solution of \eqref{eq:vi-smooth-d} to the increment tolerance $10^{-12}$ requires 23--25 Newton iterations, each of which requires the linear system solution of the form \eqref{eq:vi-grady},
Moreover, the anchored outer Hessian $\mathbf{\tilde H}$ requires two extra linear solves.
Therefore, in Table~\ref{tab:vi}, we show both the number of PDE solutions till convergence, $N_{pde}$, and the number of all linear system solutions $N_{lin}$,
occurred during the optimization of \eqref{eq:vi-cost-d} till the relative increment of $\mathbf{u}$ falls below the TT-Cross tolerance.
In addition, we report the maximal TT ranks of the state cost gradient and the state itself.
Note that assembly of the full state is not needed during the optimization of \eqref{eq:vi-cost-d} -- only certain samples of $\mathbf{y}(\xi)$ are needed in the TT-Cross approximation of $\nabla_u j_h$.
To save the computing time, the TT tensor of the entire state is computed only after the optimization of $\mathbf{u}$ has converged.

\begin{table}[h]
\caption{Cost, error in the control, number of solutions of $n_y^2 \times n_y^2$ linear system as in \eqref{eq:vi-grady}, number of complete forward PDE solutions \eqref{eq:vi-smooth-d}, and the TT ranks of the cost gradient and forward solution.}
\label{tab:vi}
\centering
\begin{tabular}{ccc|cc|cc|cc}
$d$ & $n_{\xi}$ & tol & $j_h(\mathbf{u})$ & $\frac{\|\mathbf{u}-\mathbf{u}_*\|_{\mathbf{M}_h}}{\|\mathbf{u}_*\|_{\mathbf{M}_h}}$ & $N_{lin}$ & $N_{pde}$ & $r(\nabla_u \tilde j_y)$ & $r(\mathbf{\tilde y})$ \\\hline
10 & 5 & $10^{-4}$ & 1.261333069 & 1.1473e-06 & 1070007& 44584 & 85 & 316 \\\hline
20 & 3 & $10^{-3}$ & 1.261333069 & 2.9012e-05 & 46312  & 1976  & 7  & 29 \\
20 & 3 & $10^{-4}$ & 1.261333069 & 4.2713e-06 & 433134 & 18153 & 56 & 183 \\
20 & 5 & $10^{-4}$ & 1.261333069 & ---          & 1840467& 76243 & 102& 402 \\
\end{tabular}
\end{table}

\begin{figure}[h]
\noindent
\includegraphics[width=0.32\linewidth]{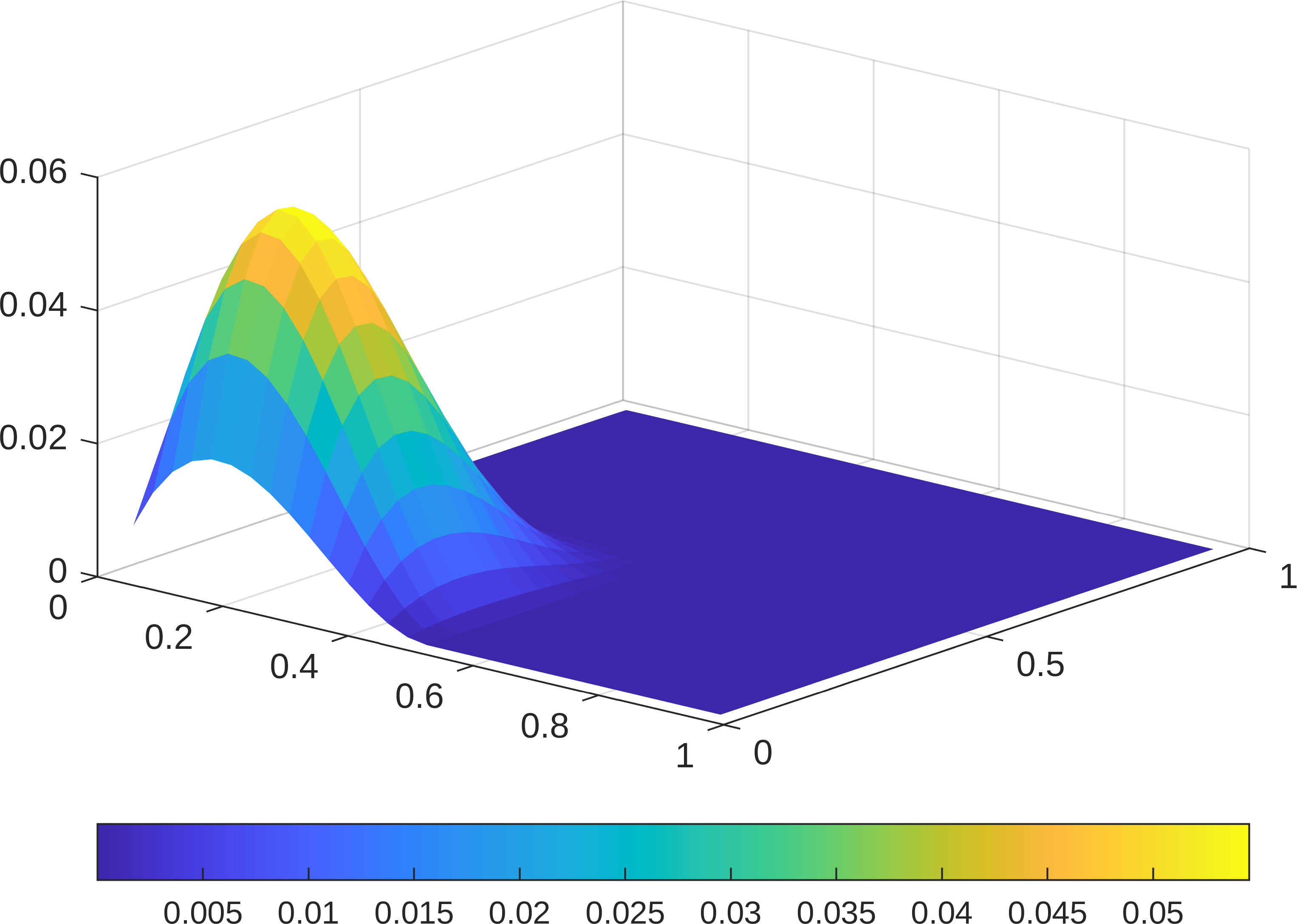}
\includegraphics[width=0.32\linewidth]{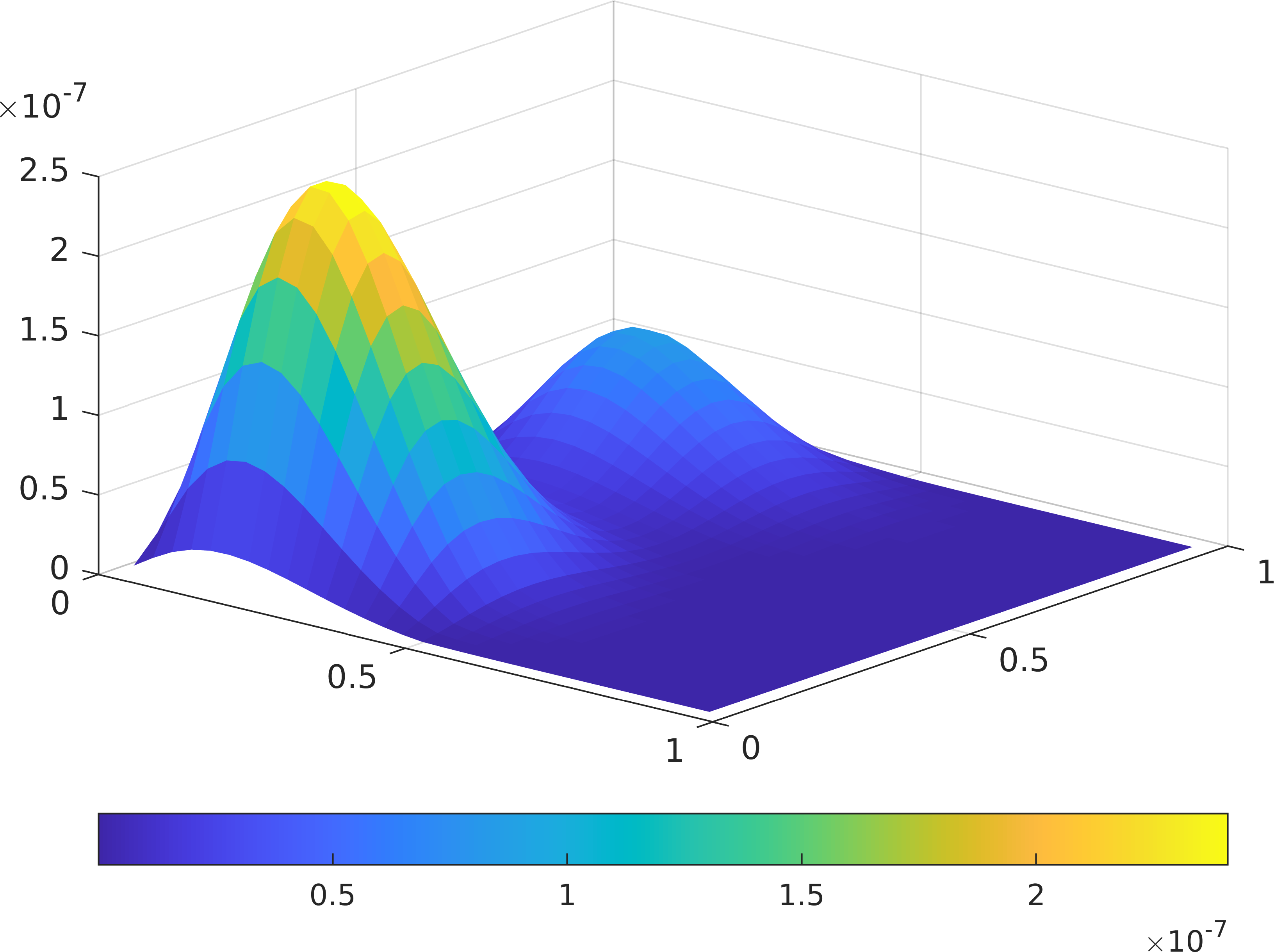}
\includegraphics[width=0.32\linewidth]{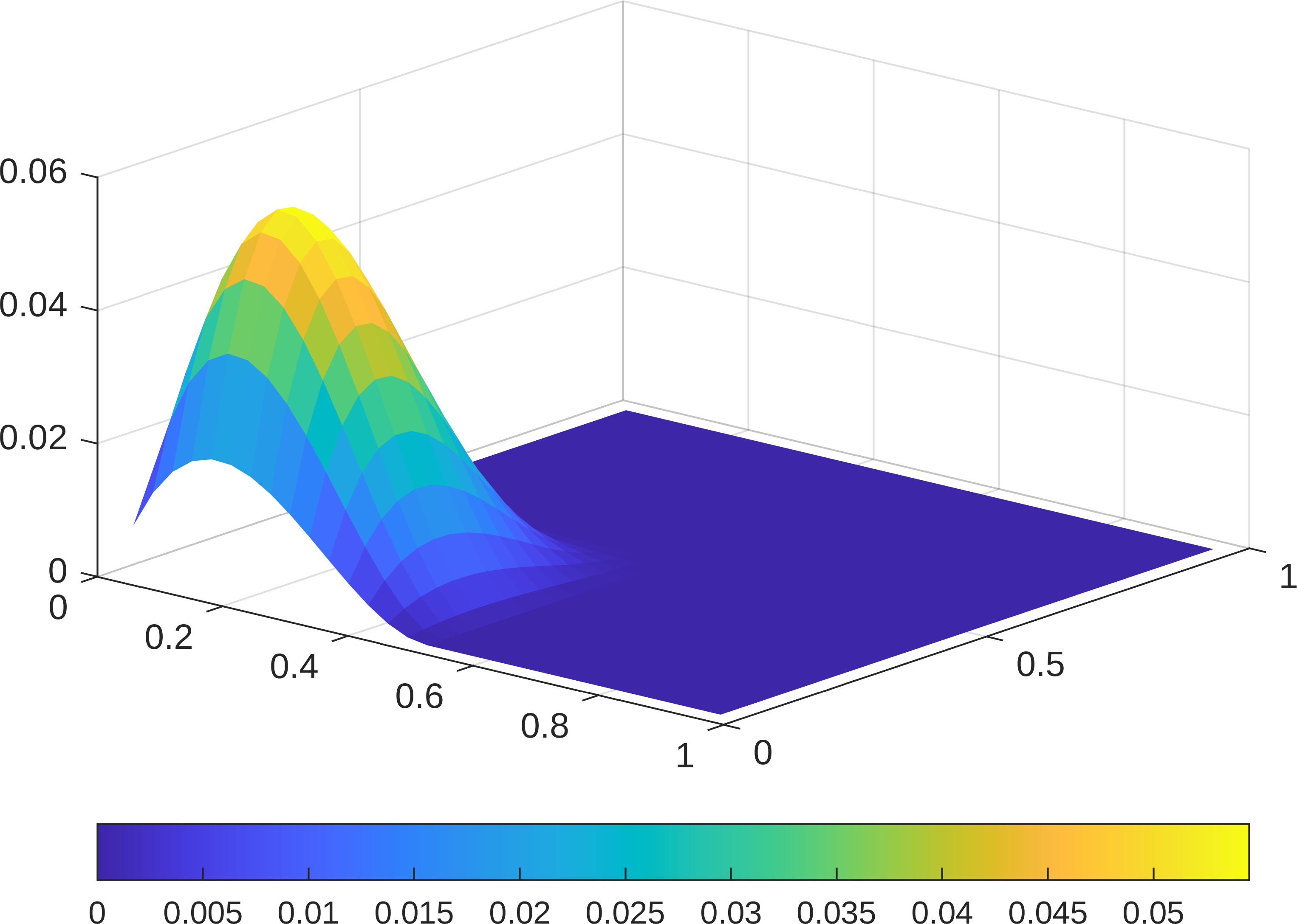} \\
\caption{Left: mean optimised state $\mathbb{E}_{\mathbb{P}}[-y]$ with $d=20,$ $n_{\xi}=3$ and $\mbox{tol}=10^{-3}$. Middle: variance of the optimized state $\mathbb{E}_{\mathbb{P}}[(y - \mathbb{E}_{\mathbb{P}}[y])^2]$. Right: optimised control $u$.
}
\label{fig:vi}
\end{figure}

In Figure~\ref{fig:vi} we show the mean optimized forward state and the control.
The results coincide qualitatively with those in \cite{Surowiec-vi-2022}.
If we consider the computational cost necessary to compute the optimal control only, we can notice that $N_{pde}$ is significantly lower than the 291808 PDE solves in the stochastic variance reduced gradient method of \cite{Surowiec-vi-2022}.

%%%%%%%%%%%%%%%%%%%%%%%%%%%%%%%%%%%%%%%%%
\subsection{SEIR ODE model}
\label{s:SEIR}

Now consider a slightly simplified version of the epidemiological ODE model used for the propagation of COVID-19 in the UK using the data from March-May 2020 \cite{DGKP-SEIR-2021}.
This is a compartmental differential equation model with the following compartments.
\begin{itemize}
 \item Susceptible ($S$).
 \item Exposed ($E$), but not yet infectious.
 \item Infected SubClinical type 1 ($I^{SC1}$): may require hospitalization in the future.
 \item Infected SubClinical type 2 ($I^{SC2}$): will recover without hospitalization.
 \item Infected Clinical type 1 ($I^{C1}$): individuals in the hospital who may decease.
 \item Infected Clinical type 2 ($I^{C2}$): individuals in the hospital who will recover.
 \item Recovered ($R$) and immune to reinfections.
 \item Deceased ($D$).
\end{itemize}
In turn, each of these compartments are split into 5 further sub-compartments corresponding to age bands: 0-19, 20-39, 40-59, 60-79 and 80+.
The number of individuals in each compartment is denoted by the name of the compartment and age band index,
For example, $S_i$ denotes the number of susceptible individuals in the $i$th age band ($i=1,\ldots,5$), $E_i$ denotes the number of exposed individuals in the $i$th age band, and so on.
Variables corresponding to different age bands but same compartment are collected into vectors, $S = (S_1,\ldots,S_5)$, $E=(E_1,\ldots, E_5)$ and so on.

Some of the variables introduced above are coupled to others only one way, and can be removed from the actual simulations.
First, when the number of infected individuals is small compared to the population size (which is typically the case in the early stages of the epidemic),
the relative variation of $S$ is small.
Hence, $S$ can be taken constant instead of solving an ODE on it.
Similarly, none of the variables depend on $R$ and $D$,
so they can be excluded from a coupled system of ODEs too, and computed separately after the solution of the ODEs.
With these considerations in mind, the forward model reads as follows:
\begin{align}\label{eq:SIR}
\frac{d}{dt}\begin{bmatrix}E \\ I^{SC1} \\ I^{SC2} \\ I^{C1} \\ I^{C2}\end{bmatrix}
& - \begin{bmatrix}
     -\kappa \mathtt{I} & A_u & A_u & 0 & 0 \\
     \kappa \cdot \mathrm{diag}(\rho) & -\eta_C \mathtt{I} & 0 & 0 & 0 \\
     \kappa \cdot \mathrm{diag}(1-\rho) & 0 & -\eta_R \mathtt{I} & 0 & 0 \\
     0 & \eta_C \cdot \mathrm{diag}(\rho') & 0 & -\nu \mathtt{I} & 0 \\
     0 & \eta_C \cdot \mathrm{diag}(1-\rho') & 0 & 0 & -\eta_{R,C} \mathtt{I}
    \end{bmatrix}
    \begin{bmatrix}E \\ I^{SC1} \\ I^{SC2} \\ I^{C1} \\ I^{C2}\end{bmatrix} = 0.
\end{align}
Here $\mathtt{I} \in \mathbb{R}^{5 \times 5}$ is the identity matrix and $\mathrm{diag}(\cdot)$ produces a diagonal matrix from a vector.
The control is defined in terms of the intensity of lockdown measures, and affects the susceptible-infected interaction matrix
$A_u = \chi \cdot \mathrm{diag}(S) \cdot C_u \cdot \mathrm{diag}(\frac{1}{N})$,
where
\begin{equation}
 C_u = \mathrm{diag}(c^{home}) C^{home} + \mathrm{diag}(c^{work}_u) C^{work} + \mathrm{diag}(c^{school}_u) C^{school} + \mathrm{diag}(c^{other}_u) C^{other}
\end{equation}
is the matrix of contact intensities between the age compartments.
The total contact intensity is a sum of pre-pandemic contact intensity matrices in the four setting $C^{home},C^{work},C^{school}$ and $C^{other}$,
multiplied by the reduction factors $c^{home},c^{work}_u,c^{school}_u$ and $c^{other}_u$ due to the lockdown measures.
Since home contacts cannot be controlled, $c^{home} = (1,\ldots,1)$,
but the remaining factors vary proportionally to the lockdown control applied from day 17 onwards,
\begin{equation}
c^{\mu}_u(t) = \left\{\begin{array}{ll}(1,1,1,1,1)^\top, & t<17, \\ (c_{123}(1-u^{\mu}(t)) ,c_{123} (1-u^{\mu}(t)),c_{123} (1-u^{\mu}(t)),c_4,c_5)^\top, & 17 \le t \le 90, \\
(c_{123}(1-u^{\mu}(90)) ,c_{123} (1-u^{\mu}(90)),c_{123} (1-u^{\mu}(90)),c_4,c_5)^\top, & t > 90,
\end{array}\right.
\end{equation}
where $\mu \in \{work, school, other\}$, $u^{\mu}$ are the intensities of lockdown measures applied to each setting $\mu$,
and $c_{123},c_4,c_5$ are the initial contact intensities in the corresponding age groups.
Note that the control will be optimized only on the time interval $[17,90]$.
Before day 17 the contact intensities are not reduced (no lockdown).
From day 90 onwards we continue applying the last value of the control.

In addition, the model depends on the following parameters:
\begin{itemize}
 \item $\chi$: probability of $S$--$I^{SC}$ interactions.
 \item $\kappa=1/d_L$: average rate of an Exposed individual becoming SubClinical. It is inversely proportional to the average number of days $d_L$ an individual stays in the Exposed state.
 \item $\eta_C = 1/d_C$: average rate of a SubClinical individual becoming Clinical. Similarly, $d_C$ is the average time spent in the SubClinical state.
 \item $\eta_R = 1/d_R$: rate of recovery from $I^{SC2}$.
 \item $\eta_{R,C} = 1/d_{R,C}$: rate of recovery from $I^{C2}$.
 \item $\nu = 1/d_D$: rate of decease in the $I^{C1}$ state.
 \item $\rho=(\rho_1,\ldots,\rho_5)^\top \in \mathbb{R}^5$: correction coefficients of the Exposed $\rightarrow$ SubClinical 1 transition rate for different age bands.
 \item $\rho'=(\rho'_1,\ldots,\rho'_5)^\top \in \mathbb{R}^5$: correction coefficients of the SubClinical $\rightarrow$ Clinical 1 transition.
 \item $N = (N_1,\ldots,N_5)^\top \in \mathbb{R}^5$: total number of individuals in each age group.
 \item $N^0$: total number of infected individuals on day 0.
 \item $N^{in} = (0.1, 0.4, 0.35, 0.1, 0.05)^\top N^{0}$: age partition of the initial number of infected individuals.
\end{itemize}
The ODE \eqref{eq:SIR} is initialized by setting
$$
E(0) = \frac{N^{in}}{3}, \quad I^{SC1}(0) = \frac{2}{3} \mathrm{diag}(\rho) N^{in}, \quad I^{SC2}(0) = \frac{2}{3} \mathrm{diag}(1-\rho) N^{in}, \quad I^{C1}(0) = I^{C2}(0)=0.
$$
The population sizes $S=N$ are taken from the Office for National Statistics, mid 2018 estimate.

However, none of the model parameters above are known beforehand.
In \cite{DGKP-SEIR-2021}, those were treated as random variables,
and their distributions were estimated from observed numbers of infections and hospitalizations during the first $90$ days using Approximate Bayesian Computation (ABC).
In general, these variables are correlated through the posterior distribution, sampling from which is a daunting problem.
Here, we replace the joint ABC posterior distribution by independent uniform distributions with a scaled posterior standard deviation centered around the posterior mean:
\begin{align}\label{eq:SIR-params}
\chi          & \sim \mathcal{U}(0.13- 0.03\sigma, 0.13+ 0.03\sigma), & d_L           & \sim \mathcal{U}(1.57- 0.42\sigma, 1.57+ 0.42\sigma), \\\nonumber
d_C           & \sim \mathcal{U}(2.12 -0.80\sigma, 2.12 +0.80\sigma), & d_R           & \sim \mathcal{U}(1.54 -0.40\sigma, 1.54 +0.40\sigma), \\\nonumber
d_{R,C}       & \sim \mathcal{U}(12.08-1.51\sigma, 12.08+1.51\sigma), & d_D           & \sim \mathcal{U}(5.54 -2.19\sigma, 5.54 +2.19\sigma), \\\nonumber
\rho_1        & \sim \mathcal{U}(0.06 -0.03\sigma, 0.06 +0.03\sigma), & \rho_2        & \sim \mathcal{U}(0.05 -0.03\sigma, 0.05 +0.03\sigma), \\\nonumber
\rho_3        & \sim \mathcal{U}(0.08 -0.04\sigma, 0.08 +0.04\sigma), & \rho_4        & \sim \mathcal{U}(0.54 -0.22\sigma, 0.54 +0.22\sigma), \\\nonumber
\rho_5        & \sim \mathcal{U}(0.79 -0.14\sigma, 0.79 +0.14\sigma), & \rho'_1       & \sim \mathcal{U}(0.26 -0.23\sigma, 0.26 +0.23\sigma), \\\nonumber
\rho'_2       & \sim \mathcal{U}(0.28 -0.25\sigma, 0.28 +0.25\sigma), & \rho'_3       & \sim \mathcal{U}(0.33 -0.27\sigma, 0.33 +0.27\sigma), \\\nonumber
\rho'_4       & \sim \mathcal{U}(0.26 -0.11\sigma, 0.26 +0.11\sigma), & \rho'_5       & \sim \mathcal{U}(0.80 -0.13\sigma, 0.80 +0.13\sigma), \\\nonumber
N^{0}         & \sim \mathcal{U}(276  - 133\sigma, 276  +133 \sigma), & c_{123}  & \sim \mathcal{U}(0.63 -0.21\sigma, 0.63 +0.21\sigma), \\\nonumber
c_4      & \sim \mathcal{U}(0.57 -0.23\sigma, 0.57 +0.23\sigma), & c_5      & \sim \mathcal{U}(0.71 -0.23\sigma, 0.71 +0.23\sigma).   \nonumber
\end{align}
Here, $\sigma$ is the standard deviation scaling parameter, taken to be $0.03$ in our experiment.
This distribution behaves qualitatively similar to the posterior distribution in the vicinity of the posterior mean.
It provides sufficient randomness to benchmark the constrained optimization method, while admitting independent sampling and gridding, needed for the TT approximations.
That is, \eqref{eq:SIR-params} form a random vector
$$
\xi=(\chi,d_L,d_C,d_R,d_{R,C},d_D,\rho_1,\rho_2,\rho_3,\rho_4,\rho_5,\rho'_1,\rho'_2,\rho'_3,\rho'_4,\rho'_5,N^{0},c_{123},c_4,c_5)
$$
of $d=20$ independent random variables,
the state vector is
$$
y(t,\xi)=(E_1,\ldots,E_5,~I^{SC1}_1,\ldots,I^{SC1}_5,~I^{SC2}_1,\ldots,I^{SC2}_5,~I^{C1}_1,\ldots,I^{C1}_5,~I^{C2}_1,\ldots,I^{C2}_5),
$$
and
the ODE~\eqref{eq:SIR} constitutes the forward problem.

For the inverse problem, we use the total number of deceased patients as the cost function.
The rate of decease is proportional to the number of Clinical type 1 individuals, so the total number of deceased individuals can be computed as
\begin{equation}\label{eq:deaths}
D(t,\xi) = \nu \int_{0}^{t} I^{C1}(s,\xi) ds.
\end{equation}
To regularize the problem,
we add also
the norm of the control $u(t) = (u^{work}(t), u^{school}(t), u^{other}(t))$.
Thus, the total cost function reads
\begin{equation}\label{eq:cost-det}
 j(u) = \frac{1}{2}\mathbb{E}_{\mathbb{P}}[D(T,\xi)] + \frac{\alpha}{2} \int_{17}^{90} \|u(t)\|_2^2 dt,
\end{equation}
where $T=100$ is the final simulation time, and $\alpha$ is the regularization parameter, which we set to $100$ in our experiment.
Note that the norm of the control is taken only over the time interval $[17,90]$ where the control varies.

We introduce the following constraints.
Firstly, we limit the control components to the intervals
$u^{work} \in [0, 0.69]$, $u^{school} \in [0, 0.9]$ and $u^{other} \in [0, 0.59]$.
Next, we constrain the $\mathcal{R}$ number at the end of the variable control interval, $\mathcal{R}(90,\xi) \le 1$.
In our model, the $\mathcal{R}$ number can be computed as $\mathcal{R}(t,\xi) = \lambda_{\max}(K),$ where
$$
K =
-\begin{bmatrix}
0 & A_u & A_u & 0 & 0 \\
0 & 0 & 0 & 0 & 0 \\
0 & 0 & 0 & 0 & 0 \\
0 & 0 & 0 & 0 & 0 \\
0 & 0 & 0 & 0 & 0 \\
\end{bmatrix}
\begin{bmatrix}
     -\kappa \mathtt{I} & 0 & 0 & 0 & 0 \\
     \kappa \cdot \mathrm{diag}(\rho) & -\eta_C \mathtt{I} & 0 & 0 & 0 \\
     \kappa \cdot \mathrm{diag}(1-\rho) & 0 & -\eta_R \mathtt{I} & 0 & 0 \\
     0 & \eta_C \cdot \mathrm{diag}(\rho') & 0 & -\nu \mathtt{I} & 0 \\
     0 & \eta_C \cdot \mathrm{diag}(1-\rho') & 0 & 0 & -\eta_{R,C} \mathtt{I}
    \end{bmatrix}^{-1},
$$
and $\lambda_{\max}$ denotes the maximal in modulus eigenvalue.
Recall that $\mathcal{R}<1$ implies that the epidemic decays, while $\mathcal{R}>1$ corresponds to an expanding epidemic.
The full smoothed Moreau-Yosida cost function becomes
\begin{equation}\label{eq:cost-MY}
 j_{\gamma}(u) = \frac{1}{2}\mathbb{E}_{\mathbb{P}}[D(T,\xi)] + \frac{\alpha}{2} \int_{17}^{90} \|u(t)\|_2^2 dt + \frac{\gamma}{2} \mathbb{E}_{\mathbb{P}}\left[\left|g_{\varepsilon_{\gamma}}(\mathcal{R}(90,\xi)-1)\right|^2\right].
\end{equation}

Since the control is applied nonlinearly in the model, computation of derivatives of the cost function \eqref{eq:cost-MY} is complicated.
Thus, instead of the Newton method, we use the projected gradient descent method,
where the gradient of \eqref{eq:cost-MY} is calculated using finite differencing with anisotropic step sizes $10^{-6} \cdot \max(|u|, 0.1)$.
The ODE \eqref{eq:SIR} is solved using an implicit Euler method with a time step $0.1$.
In this experiment, we use a fixed
Moreau-Yosida parameter $\gamma=5 \cdot 10^5$ in all iterations,
and the smoothing width is chosen as $\varepsilon_{\gamma} = 50/\sqrt{\gamma}$.
The iteration is stopped when the cost value does not decrease in two consecutive iterations.
Each random variable \eqref{eq:SIR-params} is discretized with $n=3$ Gauss-Legendre quadrature nodes,
and the TT approximations are carried out with a relative error tolerance of $10^{-2}$.
The control $u(t)$ is discretized using $7$ Gauss-Legendre nodes on $[17, 90]$ with a Lagrangian interpolation in between.

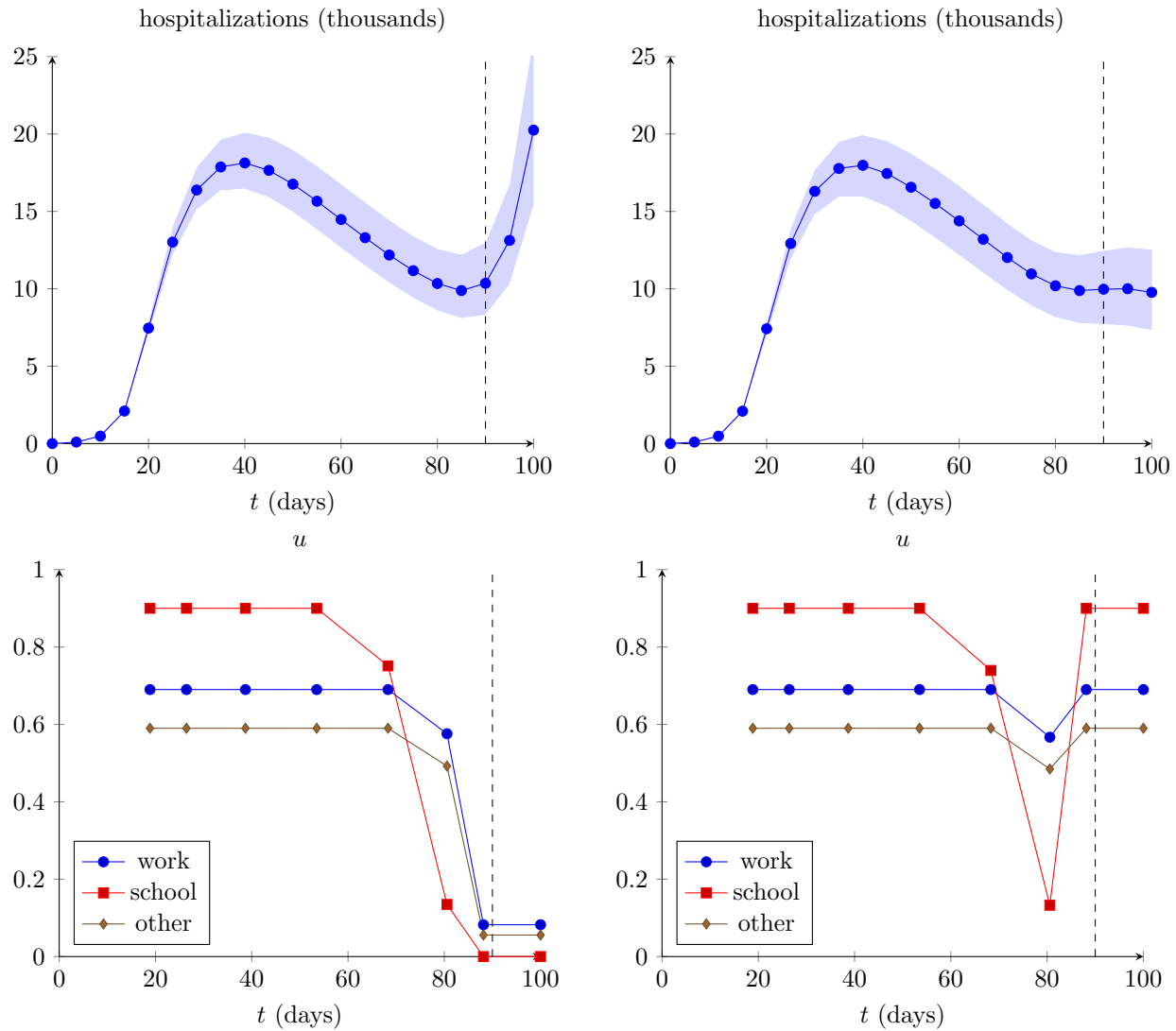
\begin{figure}[htb]
\noindent
\begin{tikzpicture}
 \begin{axis}[%
 width=0.41\linewidth,
 height=0.33\linewidth,
%  scale only axis,
 xlabel=$t$ (days),
 title={hospitalizations (thousands)},
 xmin=0,xmax=100,
 ymin=0,ymax=25,
 ]
 \addplot+[no marks,color=white,name path=minus] coordinates{
   (  0,         0)
   (  5,    0.0964)
   ( 10,    0.4684)
   ( 15,    2.0011)
   ( 20,    7.0146)
   ( 25,   12.1383)
   ( 30,   15.0915)
   ( 35,   16.3239)
   ( 40,   16.4396)
   ( 45,   15.8858)
   ( 50,   14.9448)
   ( 55,   13.8104)
   ( 60,   12.6253)
   ( 65,   11.4643)
   ( 70,   10.3737)
   ( 75,    9.3911)
   ( 80,    8.5775)
   ( 85,    8.0760)
   ( 90,    8.2952)
   ( 95,   10.2422)
   (100,   15.3517)
};
 \addplot+[no marks,color=white,name path=plus] coordinates{
   (  0,         0)
   (  5,    0.1003)
   ( 10,    0.4993)
   ( 15,    2.2159)
   ( 20,    8.0076)
   ( 25,   14.0794)
   ( 30,   17.8973)
   ( 35,   19.6667)
   ( 40,   20.1237)
   ( 45,   19.8022)
   ( 50,   19.0070)
   ( 55,   17.9566)
   ( 60,   16.7939)
   ( 65,   15.6115)
   ( 70,   14.4710)
   ( 75,   13.4322)
   ( 80,   12.6068)
   ( 85,   12.2286)
   ( 90,   13.0183)
   ( 95,   16.8137)
   (100,   26.4233)
};

\addplot[blue!16!white] fill between[of = minus and plus];

 \addplot+[blue,mark=*] coordinates{
   (  0,         0)
   (  5,    0.0982)
   ( 10,    0.4840)
   ( 15,    2.1021)
   ( 20,    7.4616)
   ( 25,   13.0157)
   ( 30,   16.3806)
   ( 35,   17.8693)
   ( 40,   18.1287)
   ( 45,   17.6519)
   ( 50,   16.7599)
   ( 55,   15.6579)
   ( 60,   14.4773)
   ( 65,   13.3014)
   ( 70,   12.1833)
   ( 75,   11.1700)
   ( 80,   10.3409)
   ( 85,    9.8865)
   ( 90,   10.3549)
   ( 95,   13.1258)
   (100,   20.2519)
}; %  node [pos=0.75,above,rotate=-27] {$\langle I^C\rangle $ CVaR $\sigma=10^{-1}$};
\addplot[dashed,black] coordinates{(90,0) (90,25)};
\end{axis}
\end{tikzpicture}
\hfill
\begin{tikzpicture}
 \begin{axis}[%
 width=0.41\linewidth,
 height=0.33\linewidth,
%  scale only axis,
 xlabel=$t$ (days),
 title={hospitalizations (thousands)},
 xmin=0,xmax=100,
 ymin=0,ymax=25,
 ]
 \addplot+[no marks,color=white,name path=minus] coordinates{
   (  0,         0)
   (  5,    0.0963)
   ( 10,    0.4665)
   ( 15,    1.9832)
   ( 20,    6.8923)
   ( 25,   11.8665)
   ( 30,   14.7810)
   ( 35,   15.9285)
   ( 40,   15.9328)
   ( 45,   15.2913)
   ( 50,   14.3456)
   ( 55,   13.2777)
   ( 60,   12.1550)
   ( 65,   11.0053)
   ( 70,    9.8851)
   ( 75,    8.8884)
   ( 80,    8.1426)
   ( 85,    7.7640)
   ( 90,    7.6969)
   ( 95,    7.5938)
   (100,    7.2956)
};
 \addplot+[no marks,color=white,name path=plus] coordinates{
   (  0,         0)
   (  5,    0.1000)
   ( 10,    0.4978)
   ( 15,    2.1963)
   ( 20,    7.9218)
   ( 25,   13.9442)
   ( 30,   17.7215)
   ( 35,   19.5188)
   ( 40,   19.9580)
   ( 45,   19.5710)
   ( 50,   18.7605)
   ( 55,   17.7860)
   ( 60,   16.6774)
   ( 65,   15.4791)
   ( 70,   14.2577)
   ( 75,   13.1648)
   ( 80,   12.4066)
   ( 85,   12.1982)
   ( 90,   12.4841)
   ( 95,   12.7015)
   (100,   12.5635)
};

\addplot[blue!16!white] fill between[of = minus and plus];

 \addplot+[blue,mark=*] coordinates{
   (  0,          0)
   (  5,     0.0982)
   ( 10,     0.4829)
   ( 15,     2.0945)
   ( 20,     7.4175)
   ( 25,    12.9262)
   ( 30,    16.2965)
   ( 35,    17.7743)
   ( 40,    17.9767)
   ( 45,    17.4476)
   ( 50,    16.5607)
   ( 55,    15.5148)
   ( 60,    14.3843)
   ( 65,    13.1996)
   ( 70,    12.0194)
   ( 75,    10.9621)
   ( 80,    10.1959)
   ( 85,     9.8848)
   ( 90,     9.9722)
   ( 95,    10.0044)
   (100,     9.7702)
}; %  node [pos=0.75,above,rotate=-27] {$\langle I^C\rangle $ CVaR $\sigma=10^{-1}$};
\addplot[dashed,black] coordinates{(90,0) (90,25)};
\end{axis}
\end{tikzpicture} \\
\noindent
\begin{tikzpicture}
 \begin{axis}[%
 width=0.41\linewidth,
 height=0.33\linewidth,
%  scale only axis,
 xlabel=$t$ (days),
 title={$u$},
 xmin=0,xmax=100,
 ymin=0,ymax=1,
 legend style={at={(0.03,0.03)},anchor=south west},
 ]
 \addplot+[] coordinates{
  ( 18.8576,    0.6900)
  ( 26.4341,    0.6900)
  ( 38.6867,    0.6900)
  ( 53.5000,    0.6900)
  ( 68.3133,    0.6900)
  ( 80.5659,    0.5759)
  ( 88.1424,    0.0823)
  (100.0000,    0.0823)
 }; \addlegendentry{work};
 \addplot+[] coordinates{
  ( 18.8576,    0.9000)
  ( 26.4341,    0.9000)
  ( 38.6867,    0.9000)
  ( 53.5000,    0.9000)
  ( 68.3133,    0.7512)
  ( 80.5659,    0.1348)
  ( 88.1424,         0)
  (100.0000,         0)
 }; \addlegendentry{school};
 \addplot+[mark=diamond*] coordinates{
  ( 18.8576,    0.5900)
  ( 26.4341,    0.5900)
  ( 38.6867,    0.5900)
  ( 53.5000,    0.5900)
  ( 68.3133,    0.5900)
  ( 80.5659,    0.4924)
  ( 88.1424,    0.0557)
  (100.0000,    0.0557)
 }; \addlegendentry{other};
 \addplot[dashed,black] coordinates{(90,0) (90,1)};
 \end{axis}
\end{tikzpicture}
\hfill
\begin{tikzpicture}
 \begin{axis}[%
 width=0.41\linewidth,
 height=0.33\linewidth,
%  scale only axis,
 xlabel=$t$ (days),
 title={$u$},
 xmin=0,xmax=100,
 ymin=0,ymax=1,
 legend style={at={(0.03,0.03)},anchor=south west},
 ]
 \addplot+[] coordinates{
  ( 18.8576,      0.6900)
  ( 26.4341,      0.6900)
  ( 38.6867,      0.6900)
  ( 53.5000,      0.6900)
  ( 68.3133,      0.6900)
  ( 80.5659,      0.5669)
  ( 88.1424,      0.6900)
  (100.0000,      0.6900)
 }; \addlegendentry{work};
 \addplot+[] coordinates{
  ( 18.8576,    0.9000)
  ( 26.4341,    0.9000)
  ( 38.6867,    0.9000)
  ( 53.5000,    0.9000)
  ( 68.3133,    0.7394)
  ( 80.5659,    0.1327)
  ( 88.1424,    0.9000)
  (100.0000,    0.9000)
 }; \addlegendentry{school};
 \addplot+[mark=diamond*] coordinates{
  ( 18.8576,    0.5900)
  ( 26.4341,    0.5900)
  ( 38.6867,    0.5900)
  ( 53.5000,    0.5900)
  ( 68.3133,    0.5900)
  ( 80.5659,    0.4847)
  ( 88.1424,    0.5900)
  (100.0000,    0.5900)
 }; \addlegendentry{other};
 \addplot[dashed,black] coordinates{(90,0) (90,1)};
 \end{axis}
\end{tikzpicture}
\caption{Top: optimized $I^C = I^{C1}+I^{C2}$, mean (blue circles) and 95\% confidence interval (shaded area). Bottom: optimized control signals. Left: unconstrained optimization, Right: optimization constrained with $\mathcal{R}(90,\xi) \le 1$ a.s. approximated with $\gamma=5\cdot 10^5$.
Black dashed lines indicate the end of the optimization time horizon $t=90$.
}
\label{fig:sir}
\end{figure}

In Figure~\ref{fig:sir}, we compare optimizations without constraining $\mathcal{R}(90,\xi)$ (left), and with the a.s. constraint (right) as described above.
We plot the time evolution of the mean and confidence interval of the total number of hospitalized individuals, $I^C(t) = I^{C1}(t) + I^{C2}(t)$.
The unconstrained scenario is a finite horizon optimization problem,
which drives the control to near zero values at the end of the controllable time interval, $t=90$,
due to the zero terminal condition on the adjoint state.
Naturally, this leads to infection growing again for $t>90$, since we extrapolate these small values of the control from $t=90$ onwards.

In contrast, if we constrain the $\mathcal{R}$ number at the end of the optimization interval to be below $1$ almost surely,
this drives the control to higher values again.
If we extrapolate these control values beyond the optimization window,
the epidemic continues decaying, albeit with a slightly larger uncertainty.
This indicates that almost sure constraints can suggest a more resilient control in risk-critical applications.

\bibliographystyle{siamplain}
\bibliography{refs}

\appendix
\section{Remainder of Proof to Lemma~3.9}\label{app:measure_proof}

The term \textrm{II} can be expanded as follows:
\begin{align*}
(\mu^{\gamma}, \mathbf{A}^{-1} \mathbf{B} (u^{\gamma} - u^{\dagger})) & = \left(T^*(T y^{\gamma} + T y_f - y_d) + \mu^{\gamma} - T^*(T y^{\gamma} + T y_f - y_d), \mathbf{A}^{-1} \mathbf{B} (u^{\gamma} - u^{\dagger})\right) \\
& = \left(\mathbf{A}^* \lambda^{\gamma}  - T^* T y^{\gamma} - T^* T y_f + T^* y_d, \mathbf{A}^{-1} \mathbf{B} (u^{\gamma} - u^{\dagger})\right) \\
& = \mathbb{E}_{\mathbb{P}}\left[\left(\mathbf{B}^* \lambda^{\gamma} - \mathbf{B}^* \mathbf{A}^{-*}(T^*T y^{\gamma} + T^*T y_f - T^* y_d), u^{\gamma} - u^{\dagger}\right)_{L^2(D)}\right] \, .
\end{align*}
Applying \cite[Thm.~3.7.12]{hille-FuncAn-1974} to $\mathbf{B}^* \lambda^{\gamma}$ thanks to its uniform integrability yields
\begin{align*}
(\mu^{\gamma}, \mathbf{A}^{-1} \mathbf{B} (u^{\gamma} - u^{\dagger})) & = \left(\mathbb{E}_{\mathbb{P}}\left[\mathbf{B}^* \lambda^{\gamma}\right], u^{\gamma} - u^{\dagger}\right)_{L^2(D)} \\
& - \mathbb{E}_{\mathbb{P}}\left[\left(\mathbf{B}^* \mathbf{A}^{-*}(T^*T y^{\gamma} + T^*T y_f - T^* y_d), u^{\gamma} - u^{\dagger}\right)_{L^2(D)}\right].
\end{align*}
Using $\alpha u^{\gamma} + \mathbb{E}_{\mathbb{P}}\left[\mathbf{B}^* \lambda^{\gamma}\right] + \eta^{\gamma} = 0$, and that $u^{\dagger} \in \mathcal{U}_{ad}$ (as a result, $(-\eta^\gamma, u^\gamma-u^\dagger) \le 0$),
\begin{align*}
(\mu^{\gamma}, \mathbf{A}^{-1} \mathbf{B} (u^{\gamma} - u^{\dagger})) & =
\left(-\alpha u^{\gamma} - \eta^{\gamma}, u^{\gamma} - u^{\dagger}\right)_{L^2(D)} \\
& - \mathbb{E}_{\mathbb{P}}\left[\left(\mathbf{B}^* \mathbf{A}^{-*}(T^*T y^{\gamma} + T^*T y_f - T^* y_d), u^{\gamma} - u^{\dagger}\right)_{L^2(D)}\right] \\
& \le \left(-\alpha u^{\gamma} , u^{\gamma} - u^{\dagger}\right)_{L^2(D)} \\
& - \mathbb{E}_{\mathbb{P}}\left[\left(\mathbf{B}^* \mathbf{A}^{-*}(T^*T y^{\gamma} + T^*T y_f - T^* y_d), u^{\gamma} - u^{\dagger}\right)_{L^2(D)}\right] \\
& \le c_a < \infty,
\end{align*}
since
$$
y(u) \mapsto \mathbb{E}_{\mathbb{P}}\left[\frac{1}{2}\|T y(u) + T y_f - y_d\|_{L^2(D)}^2\right] \quad \text{and} \quad u \mapsto \frac{\alpha}{2}\|u\|^2_{L^2(D)}
$$
are continuously differentiable with uniformly bounded gradients on $\mathcal{U}_{ad}$.

Now since the $L^1$-norm is positively homogeneous, subadditive and continuous, we can use the Fenchel-Moreau theorem to express
\begin{align*}
\|\mu^{\gamma}\|_{L^1_{\pi}(\Xi)} & = \frac{1}{\delta} \sup\left\{\langle \mu^{\gamma}, v\rangle_{L^1_{\pi}(\Xi) \times L^{\infty}_{\pi}(\Xi)} : v \in \mathbb{B}_{\delta}(0)\right\} \\
& = \frac{1}{\delta} \sup \left\{(\mu^{\gamma}, v)_{L^2_{\pi}(\Xi) } : v \in \mathbb{B}_{\delta}(0)\right\} \le \frac{1}{\delta} c_0 < \infty.
\end{align*}
Thus, the sequence $\mu^{\gamma}$ is bounded in $L^1_{\pi}(\Xi)$, so we can extract a subsequence $\mu^{\gamma_k}$ which is weak* convergent to some regular countably additive Borel measure $\rho \in \mathcal{M}(\overline{\Xi})$ \cite[Cor.~2.4.3]{HAttouch_GButtazzo_GMichaille_2006}.

\section{Tensor-Train decomposition and approximation}\label{app:tt}
In this section, we describe the Tensor-Train (TT) decomposition as a function approximation technique that allows fast computation of the expectation.
The original TT decomposition \cite{osel-tt-2011} was proposed for tensors (such as tensors of expansion coefficients), and the functional TT (FTT) decomposition~\cite{Marzouk-stt-2016,Gorodetsky-ctt-2019} has extended this idea to multivariate functions.

Let us introduce a basis $\{\ell_i(\xi_k)\}_{i=1}^{n_{\xi}}$ in each random variable  $\xi_k$, $k=1,\ldots,d$, and a quadrature with nodes $Z = \{z_j\}$ and weights $\{w_j\}$ which is exact on this basis,
$$
\mathbb{E}_{\mathbb{P}}[\ell_i] = \sum_{j=1}^{n_{\xi}} w_j \ell_i(z_j).
$$
For example, we can take Lagrange interpolation polynomials built upon a Gaussian quadrature, or orthogonal polynomials up to degree $n_{\xi}-1$ together with the roots of the degree-$n_{\xi}$ polynomial, or Fourier modes and the rectangular quadrature with the number of nodes corresponding to the highest frequency.
Then we can approximate any random field $y\in\mathcal{Y}$ in the tensor product basis,
$$
y(\xi) \approx \sum_{i_1=1}^{n_{\xi}} \cdots \sum_{i_d=1}^{n_{\xi}} \mathbf{Y}_{i_1,\ldots,i_d} \ell_{i_1}(\xi_1) \cdots \ell_{i_d}(\xi_d).
$$
Note that the expansion coefficients $\mathbf{Y}$ form a tensor of $n_{\xi}^d$ entries, which is impossible to store directly if $d$ is large.
The TT decomposition aims to factorize this tensor further to a product of tensors of manageable size.
\begin{definition}
 A tensor $\mathbf{Y}\in \mathbb{R}^{n_{\xi} \times \cdots \times n_{\xi}}$ is said to be approximated by the \emph{TT decomposition} with a relative approximation error $\epsilon$ if there exist 3-dimensional tensors $\mathbf{Y}^{(k)} \in \mathbb{R}^{r_{k-1} \times n_{\xi} \times r_k}$, $k=1,\ldots,d$, such that
 \begin{equation}\label{eq:tt}
  \mathbf{\tilde Y}_{i_1,\ldots,i_d} := \sum_{s_0,\ldots,s_d=1}^{r_0,\ldots,r_d} \mathbf{Y}^{{(1)}}_{s_0,i_1,s_1} \mathbf{Y}^{{(2)}}_{s_1,i_2,s_2} \cdots \mathbf{Y}^{{(d)}}_{s_{d-1},i_d,s_d},
 \end{equation}
 and $\|\mathbf{Y} - \mathbf{\tilde Y}\|_{F} = \epsilon \|\mathbf{Y}\|_{F}$.
 The factors $\mathbf{Y}^{(k)}$ are called \emph{TT cores}, and the ranges of summation indices $r_0,\ldots,r_d \in \mathbb{N}$ are called \emph{TT ranks}.
 Note that without loss of generality we can let $r_0=r_d=1$.
\end{definition}
Plugging in the basis and redistributing the summations we obtain the FTT approximation
$$
\tilde y(\xi) := \sum_{s_0,\ldots,s_d=1}^{r_0,\ldots,r_d} y^{{(1)}}_{s_0,s_1}(\xi_1) y^{{(2)}}_{s_1,s_2}(\xi_2) \cdots y^{{(d)}}_{s_{d-1},s_d}(\xi_d),
$$
where
$$
y^{(k)}_{s_{k-1},s_k}(\xi_k) = \sum_{i=1}^{n_{\xi}} \mathbf{Y}^{{(k)}}_{s_{k-1},i,s_k} \ell_i(\xi_k), \quad k=1,\ldots,d.
$$
Smooth \cite{uschmajew-approx-rate-2013}, weakly correlated \cite{rdgs-tt-gauss-2022} or certainly structured \cite{khor-low-rank-kron-P1-2006} functions
have been shown to induce rapidly converging TT approximations.

Given the TT decomposition, its expectation can be computed by first integrating each TT core, and then multiplying the TT cores one by one.
Let
\begin{equation}\label{eq:ttcoreint}
\mathbf{V}^{(k)}_{s_{k-1},s_k} = \sum_{j=1}^{n_{\xi}} w_j y^{(k)}_{s_{k-1},s_k}(z_j) = \sum_{i,j=1}^{n_{\xi}} w_j \mathbf{L}_{i,j} \mathbf{Y}^{(k)}_{s_{k-1},i,s_k}, \quad \mbox{where} \quad \mathbf{L}_{i,j} = \ell_i(z_j).
\end{equation}
Now we multiply the matrices $\mathbf{V}^{(k)} \in \mathbb{R}^{r_{k-1} \times r_k}$ in order:
\begin{equation}\label{eq:E[tt]}
\mathbb{E}_{\mathbb{P}}[\tilde y] = \left(\left(\left(\mathbf{V}^{(1)} \mathbf{V}^{(2)}\right) \mathbf{V}^{(3)}\right) \cdots \mathbf{V}^{(d)}\right).
\end{equation}
Note that each step in \eqref{eq:E[tt]} is a product of $1 \times r_{k-1}$ vector by $r_{k-1} \times r_k$ matrix.
In turn, the univariate quadrature \eqref{eq:ttcoreint} requires $n_{\xi}^2 r_{k-1}r_k$ floating point operations if the Vandermonde matrix $\mathbf{L}$ is dense, and $\mathcal{O}(n_{\xi} r_{k-1}r_k)$ if it's sparse, for example, if Lagrange polynomials are used.
Introducing $r := \max_k r_k$, we conclude that the expectation of a TT decomposition can be computed with a complexity $\mathcal{O}(dr^2)$ which is linear in the dimension.

To compute a TT approximation, we employ the TT-Cross algorithm \cite{ot-ttcross-2010}.
We start with an empirical risk minimization problem
$$
\min_{\mathbf{Y}^{(1)},\ldots,\mathbf{Y}^{(d)}} \sum_{j=1}^{N} \left(\tilde y(\xi^j) - y(\xi^j)\right)^2,
$$
where $\Xi = \{\xi^j\}$ is a certain set of samples.
To avoid minimization over all $\mathbf{Y}^{(1)},\ldots,\mathbf{Y}^{(d)}$ simultaneously (which is non-convex), we switch to an alternating direction approach:
iterate over $k=1,\ldots,d$, solving in each step
\begin{equation}\label{eq:tt-local}
\min_{\mathbf{Y}^{(k)}} \sum_{j=1}^{N} \left(\tilde y(\xi^j) - y(\xi^j)\right)^2.
\end{equation}
This problem can be solved by linear normal equations.
Indeed, introduce a matrix $\mathbf{Y}_{\neq k}\in\mathbb{R}^{N \times (r_{k-1} n_{\xi} r_k)}$ with elements
$$
(\mathbf{Y}_{\neq k})_{j,t} = \sum_{s_0,\ldots,s_{k-2}}  y^{(1)}_{s_0,s_1}(\xi_1^j) \cdots y^{(k-1)}_{s_{k-2},s_{k-1}}(\xi_{k-1}^j) \ell_i(\xi_k^j) \sum_{s_{k+1},\ldots,s_d}  y^{(k+1)}_{s_k,s_{k+1}}(\xi_{k+1}^j) \cdots y^{(d)}_{s_{d-1},s_{d}}(\xi_{d}^j),
$$
where $t = (s_{k-1}-1)n_{\xi} r_k + (i-1)r_{k} + s_k$,
and a vector $\mathbf{y}^{(k)} \in \mathbb{R}^{r_{k-1} n_{\xi} r_k}$ with elements $\mathbf{y}^{(k)}_t = \mathbf{Y}^{(k)}_{s_{k-1},i,s_k}$.
Now $\tilde y(\Xi) = \mathbf{Y}_{\neq k} \mathbf{y}^{(k)}$, and \eqref{eq:tt-local} is minimized by
\begin{equation}\label{eq:tt-normeq1}
\mathbf{y}^{(k)} = (\mathbf{Y}_{\neq k}^\top \mathbf{Y}_{\neq k})^{-1} (\mathbf{Y}_{\neq k}^\top y(\Xi)).
\end{equation}

To both select ``good'' sample set $\Xi$ and simplify the assembly of $\mathbf{Y}_{\neq k}$, we restrict the set to have the Cartesian form
$$
\Xi = \Xi_{<k} \times Z \times \Xi_{>k},
$$
where $\Xi_{<k} = \{(\xi_1,\ldots,\xi_{k-1})\}$, $\Xi_{>k} = \{(\xi_{k+1},\ldots,\xi_{d})\}$
with \emph{nestedness} conditions
\begin{align*}
(\xi_1,\ldots,\xi_{k-1},\xi_{k}) \in \Xi_{<k+1} & \Rightarrow (\xi_1,\ldots,\xi_{k-1}) \in \Xi_{<k}, \\
(\xi_k,\xi_{k+1},\ldots,\xi_{d}) \in \Xi_{>k-1} & \Rightarrow (\xi_{k+1},\ldots,\xi_{d}) \in \Xi_{>k}.
\end{align*}
This makes
$$
\mathbf{Y}_{\neq k} = \mathbf{Y}_{<k} \otimes \mathbf{L} \otimes \mathbf{Y}_{>k},
$$
where
\begin{align*}
(\mathbf{Y}_{<k})_{j,s} & = \sum_{s_0,\ldots,s_{k-2}} y^{(1)}_{s_0,s_1}(\xi_1^j) \cdots y^{(k-1)}_{s_{k-2},s}(\xi_{k-1}^j), & (\xi_1^j,\ldots,\xi_{k-1}^j) \in \Xi_{<k}, \\
(\mathbf{Y}_{>k})_{j,s} & = \sum_{s_{k+1},\ldots,s_{d}} y^{(k+1)}_{s,s_{k+1}}(\xi_{k+1}^j) \cdots y^{(d)}_{s_{d-1},s_{d}}(\xi_{d}^j), & (\xi_{k+1}^j,\ldots,\xi_{d}^j) \in \Xi_{>k}.
\end{align*}
Moreover, $\mathbf{Y}_{<k+1}$ and $\mathbf{Y}_{>k-1}$ are
submatrices of
\begin{equation}\label{eq:Ylek}
\mathbf{Y}_{\le k} := \begin{bmatrix}\mathbf{Y}_{<k} y^{(k)}(z_1) \\ \vdots \\\mathbf{Y}_{<k} y^{(k)}(z_{n_{\xi}}) \end{bmatrix} \quad \mbox{and} \quad \mathbf{Y}_{\ge k} := \begin{bmatrix} y^{(k)}(z_1) \mathbf{Y}_{>k} & \cdots & y^{(k)}(z_{n_{\xi}}) \mathbf{Y}_{>k} \end{bmatrix},
\end{equation}
respectively.
This allows us to build the sampling sets by selecting $r_{k}$ rows of $\mathbf{Y}_{\le k}$ (resp. columns of $\mathbf{Y}_{\ge k}$) by the \emph{maximum volume principle} \cite{gostz-maxvol-2010},
which needs only $\mathcal{O}(n_{\xi}r^3)$ floating point operations per single matrix $\mathbf{Y}_{\le k}$ or $\mathbf{Y}_{\ge k}$.
The $r_k$ indices of rows of $\mathbf{Y}_{\le k}$ constituting the maximum volume submatrix $\mathbf{Y}_{<k}$ are also indices of the $r_k$ tuples in $\Xi_{<k} \times Z$ constituting the next ``left'' set $\Xi_{<k+1}$. The ``right'' set $\Xi_{>k-1}$ is constructed analogously.
This closes the recursion and allows us to carry out the alternating iteration in either direction, $k=1,\ldots,d$ or $k=d,\ldots,1$.
By this construction, the cardinality of $\Xi_{<k+1}$ is $r_k$, and the cardinality of $\Xi_{>k-1}$ is $r_{k-1}$.
Hence, the cardinality of $\Xi$ is $r_{k-1}n_{\xi} r_k$, and one full iteration of the TT-Cross algorithm needs $\mathcal{O}(dn_{\xi}r^2)$ samples of $y$.

One drawback of the ``naive'' TT-Cross algorithm outlines above is that the TT ranks are fixed.
To adapt them to a desired error tolerance, several modifications have been proposed:
merge $\xi_k,\xi_{k+1}$ into one variable, optimize the corresponding larger TT core, and separate it into two actual TT cores using truncated singular value decomposition (SVD) \cite{so-dmrgi-2011proc} or matrix adaptive cross approximation \cite{ds-parcross-2020};
oversample $\Xi_{<k}$ or $\Xi_{>k}$ with random or error-targeting points \cite{ds-amen-2014};
oversample the selection of submatrices from \eqref{eq:Ylek} by using the \emph{rectangular} maximum volume principle \cite{mo-rectmaxvol-2018}.

However, in this paper we can pursue a somewhat more natural regression approach \cite{dklm-tt-pce-2015}.
We will always need to approximate a vector function, where different components correspond to different degrees of freedom of an ODE or  a PDE solution, or different components of a gradient.
Since the procedure to evaluate $y$ is now taking two arguments ($\xi$ and, say, $m=1,\ldots,M$ indexing extra degrees of freedom), we can replace the normal equations \eqref{eq:tt-normeq1} by
$$
\mathbf{y}^{(k)}(m) = (\mathbf{Y}_{\neq k}^\top \mathbf{Y}_{\neq k})^{-1} (\mathbf{Y}_{\neq k}^\top y(\Xi,m)),
$$
which can be reshaped into a 4-dimensional tensor $\mathbf{\hat Y}^{(k)} \in \mathbb{R}^{r_{k-1} \times n_{\xi} \times r_k \times M}$ with elements $\mathbf{\hat Y}^{(k)}_{s_{k-1},i,s_k,m} = \mathbf{y}^{(k)}_t(m)$.
To compute the usual 3-dimensional TT core, we can use a simple Principal Component Analysis (PCA), which selects $\hat r$ slices $\mathbf{Y}^{(k)}_{s_{k-1},i,1},\ldots,\mathbf{Y}^{(k)}_{s_{k-1},i,\hat r}$ with the minimal $\hat r$ such that
$$
\min_\mathbf{W} \sum_{s_{k-1},i,s_k,m}\left(\sum_{s=1}^{\hat r}\mathbf{Y}^{(k)}_{s_{k-1},i,s} \mathbf{W}_{s,s_{k},m} - \mathbf{\hat Y}^{(k)}_{s_{k-1},i,s_k,m}\right)^2 \le \mathrm{tol}^2 \cdot \|\mathbf{\hat Y}^{(k)}\|_F^2.
$$
Note that this problem is solved easily by the truncated SVD,
where the new TT rank $\hat r$ can be chosen anywhere between $1$ and $\min\{r_{k-1}n_{\xi}, r_kM\}$ to satisfy the error tolerance $\mathrm{tol}$.
After replacing $r_k$ with $\hat r$, the TT-Cross iteration $k=1,\ldots,d$ can proceed as previously.
In the last step ($k=d$), the PCA step is omitted, and we obtain the so-called \emph{block} TT decomposition \cite{dkos-eigb-2014}, which in the functional form reads
$$
\tilde y(\xi,m) = \sum_{s_0,\ldots,s_d} y^{(1)}_{s_0,s_1}(\xi_1) \cdots y^{(d-1)}_{s_{d-2},s_{d-1}}(\xi_{d-1}) \hat y^{(d)}_{s_{d-1},s_d}(\xi_d,m).
$$
The ``backward'' iteration $k=d,\ldots,1$ can be generalized similarly.

%%%%%%%%%%%%%%%%%%%%%%%%%%%%
\section{Speeding up the gradient by precomputing the state}
\label{app:prac}
using the TT-Cross, followed by taking the expectation of the TT decomposition.\footnote{Note that $\mathbf{G}^{\varepsilon,h}_{u}(\xi)$ is a vector function with $M$ being the number of degrees of freedom in the discretized $u$.}
This can be performed in two ways.
To begin with, we can apply the TT-Cross algorithm to approximate directly $\mathbf{G}^{\varepsilon,h}_{u}(\xi)$.
For each sample $\xi^j \in \Xi$, one needs to solve one forward problem to compute $\mathbf{S}_h(\xi^j) u$,
and one adjoint problem to apply $\mathbf{S}_h(\xi^j)^*$ to the rest of the function.
Recall that the TT-Cross needs $\mathcal{O}(dn_{\xi}r^2)$ samples, hence $\mathcal{O}(dn_{\xi}r^2)$ solutions of the forward, adjoint and sensitivity problems.
However, the maximal TT rank $r$ of the softplus and sigmoid functions typically grows proportionally to $1/\varepsilon$.
When the solution of the forward and adjoint problem is expensive (for example, in the PDE-constrained optimization), this may result in an excessive computational complexity.

Alternatively, we can first compute TT approximations $\mathbf{\tilde y}(\xi) \approx \mathbf{S}_h(\xi)u$
and $\mathbf{\tilde S}_h(\xi)^* \approx \mathbf{S}_h(\xi)^*$,
followed by TT approximations $\mathbf{\tilde g}_{\varepsilon}(\xi) :\approx g_{\varepsilon}(\mathbf{\tilde y}(\xi) + \mathbf{y}_f - \boldsymbol\psi_h)$,
$\mathbf{\tilde g'}_{\varepsilon}(\xi) :\approx g'_{\varepsilon}(\mathbf{\tilde y}(\xi) + \mathbf{y}_f - \boldsymbol\psi_h)$, and finally
$\mathbf{\tilde G}^{\varepsilon,h}_{u}(\xi) \approx \mathbf{\tilde S}_h(\xi)^* \mathrm{diag}(\mathbf{\tilde g'}_{\varepsilon}(\xi)) \mathbf{M} \mathbf{\tilde g}_{\varepsilon}(\xi)$.
The product of TT tensors in $\mathbf{\tilde G}^{\varepsilon,h}_{u}(\xi)$ is also computed using the TT-Cross, but now using the approximate solution $\mathbf{\tilde y}(\xi)$,
which can be interpolated cheaply in the TT-Cross, instead of solving the full PDE.
The bottleneck now is the approximation of the matrix-valued function $\mathbf{S}_h(\xi)^* \in \mathbb{R}^{n_u \times n_y}$.
If both $n_y$ and $n_u$ are large (for example, in a case of a distributed control),
the computation of $\mathbf{S}_h(\xi)^*$ for each sample of $\xi$ requires assembling this large dense matrix, equivalent to the solution of the adjoint problem with $n_u$ right hand sides.
Nevertheless, the tensor approximation of $\mathbf{S}_h(\xi)^*$ converges usually much faster (e.g. exponentially) compared to the approximation of $\mathbf{G}^{\varepsilon,h}_{u}(\xi)$ directly,
hence the TT approximation of $\mathbf{S}_h(\xi)^*$ may need much smaller TT ranks compared to the TT approximation of $\mathbf{G}^{\varepsilon,h}_{u}(\xi)$.
In turn, the TT-Cross applied to $\mathbf{S}_h(\xi)^*$ requires much fewer solutions of the forward problem.
For a moderate $n_u$ this makes it faster to precompute $\mathbf{\tilde y}(\xi)$ and $\mathbf{\tilde S}_h(\xi)^*$.

\end{document}